\newtheorem{theorem}{Theorem}[section]
\newtheorem{lemma}[theorem]{Lemma}
\newtheorem{proposition}[theorem]{Proposition}
\theoremstyle{definition}
\theoremstyle{remark}
\newtheorem{remark}[theorem]{Remark}
\numberwithin{equation}{section}
\newcommand{\ba}{\begin{array}}
\newcommand{\ea}{\end{array}}
\newcommand{\f}{\frac}
\newcommand{\Om}{\Omega}
\newcommand{\la}{\lambda}
\newcommand{\ds}{\displaystyle}
\begin{document}
\date{}
\title{ \bf\large{Hopf bifurcations  in a reaction-diffusion model with a
general advection term and delay effect}}
\author{
Jingxiao Song, Chengwei Ren\; and\; Shaofen Zou\footnote{Corresponding author,
Email: shaofenzzou@hnu.edu.cn (S. Zou).
}\\[-1mm]
{\small School of Mathematics, Hunan University}\\[-2mm]
{\small Changsha, Hunan 410082, PR China}\\[1mm]
}
\maketitle

\begin{abstract}
{
This paper investigates a class of reaction-diffusion population models defined on a bounded domain, characterized by a general time-delayed per capita growth rate and a general advection term. Notably, the growth rate encompasses both Logistic-type and weak Allee effect-type dynamical behaviors. By applying the Lyapunov method, we establish the existence of spatially inhomogeneous steady states when a parameter approaches the principal eigenvalue of a non-self-adjoint elliptic operator. A detailed analysis of the characteristic equation further confirms the existence of Hopf bifurcations originating from these steady states. Subsequently, by applying center manifold reduction and normal form theory, we ascertain the direction of these Hopf bifurcations and the stability of the resulting periodic orbits. Finally, the proposed general theoretical results are successfully applied to a ``food-limited" population model and a weak Allee effect-driven population model, each of which incorporates diffusion, time delay, and advection, thus confirming the validity of our approach.}

 {\emph{Keywords}}: Reaction-diffusion; Advection;
Delay; Hopf bifurcation; Spatial Heterogeneity; Non-self-adjoint elliptic operator.
\end{abstract}

\newpage
\section {Introduction}

In recent decades, the interaction between diffusion and time delays has drawn considerable attention in the fields of mathematical biology and dynamical systems. Many existing works (see, e.g., \cite{hu2011spatially, lou2006effects, Yan2010, Yan2012, su2009hopf, su2012hopf}) have been devoted to the dynamical behavior near spatially homogeneous or nonhomogeneous steady-state solutions. It should be pointed out, however, that results concerning spatially nonhomogeneous steady-state solutions remain relatively scarce, largely due to the considerable challenges involved in their analysis. There are two main reasons for this difficulty: first, establishing the existence of nontrivial steady-state solutions is itself a nontrivial task, often requiring sophisticated tools such as a priori estimates and topological degree theory; second, the characteristic equation associated with the linearized system generally lacks an explicit algebraic structure when the steady state is spatially nonuniform, which greatly complicates the eigenvalue analysis. Therefore, a thorough investigation into the stability and bifurcation of spatially nonhomogeneous steady-state solutions is not only of theoretical importance but also of practical relevance in applications. 

In spatially heterogeneous environments, organisms typically possess the ability to sense and respond to local conditions through adaptive movement, which often involves both random diffusion and directional advection.(see, e.g., \cite{averill2017role, lam2011concentration, lam2012limiting, belgacem1995effects, cantrell2003spatial, cosner2014reaction, lam2015evolution, lou2014evolution, lou2016qualitative, lou2019global, lou2015evolution, vasilyeva2012how, zhou2018evolution}) To accurately capture such taxis behavior, it is essential to incorporate advection effects into population models. In this context, we consider the following single-species model with a general advection term:
\begin{equation}\label{1.1}
\begin{cases}
u_t = D\nabla \cdot [d({x})\nabla u(x,t) - \vec{b}({x})u(x,t)] + u(x,t)f\left(x, u(x,t-r)\right), & {x} \in \Omega, \; t > 0, \\
u = 0, & {x} \in \partial\Omega, \; t > 0.
\end{cases}
\end{equation}
where $u(x,t)$ represent the density of the population at position $x$ and time $t$, and $r\geq0$ is the time-delay parameter, often used to represent the maturity time of the species, and D represents the average rate of diffusion and advection, and $\Omega$ is a bounded domain in $\mathbb{R}^n$ $(1 \le n \le 3)$ with a smooth boundary $\partial\Omega$. 

It is worth noting that if there exists a function $B(x)$ such that $\vec{b}(x) = d(x) \nabla B(x)$, then the problem can be significantly simplified through variable substitution. However, this condition is not assumed to be true in this paper, so the adopted approach has a wider range of applicability. Since the elliptic operator considered in this study is a non-self-adjoint operator and cannot be transformed into a self-adjoint form through transformation, the powerful tools applicable to self-adjoint operators in classical eigenvalue theory cannot be directly applied here, which brings certain technical challenges to the analysis.

Our proposed model is capable of capturing not only the classical logistic growth but also also be used to consider growth models such as “food-limited” and weak Allee effect. The logistic growth, a widely used model, describes population dynamics under crowding effects driven by intraspecific competition. The food-limited growth model constitutes a refinement of the classical logistic growth. It describes a growth limitation mechanism based on the proportion of unexploited resources, thereby providing a more realistic representation of population dynamics under resource scarcity. (see, e.g.,\cite{gopalsamy1988time, gopalsamy1990environmental, kuang1993delay} ). Meanwhile, the weak Allee effect describes the phenomenon where the reproductive and survival rates of individuals decrease in populations that are relatively small (see, e.g., \cite{allee1931animal, cantrell2003spatial, shi2006persistence, stephens1999what}).

Letting $t = \tilde{t}/D$, dropping the tilde sign, and denoting $\lambda = 1/D$ and $\tau = Dr$, we see that \eqref{1.1} can be transformed to the following equivalent model:
\begin{equation}\label{delay}
\begin{cases}
u_t =\nabla \cdot[d(x) \nabla u(x,t)-\vec{b}(x) u(x,t)]+
\la u(x,t)f\left(x, u(x,t-\tau)\right),&x\in\Om,\ t>0,\\
u(x,t)=0,&  x\in\partial\Om,\ t>0.\\
\end{cases}
\end{equation}
Throughout the paper, unless otherwise specified, $f\left(x, u\right)$ satisfies the following assumption:
\begin{enumerate}
\item[$\mathbf{(H_1)}$] $f\left(x, u\right)$ is a smooth function (for example, $f$ has 
fourth order continuous partial derivatives) ,$f\left(x, 0\right):=m(x)$, $m(x)\in C^2(\overline\Omega)$, and $\max_{x\in\overline\Omega}m(x)>0$.
\end{enumerate}
\begin{enumerate}
\item[$\mathbf{(H_2)}$]
There exists  $\alpha \in \left(0, \frac{1}{2}\right)$  such that 
$m(x) \in C^{\alpha}(\bar{\Omega})$ and 
$\vec{b}(x) \in \left[C^{1+\alpha}(\bar{\Omega})\right]^n $, 
 where  $n \leq 3$  is the dimension of the domain $ \Omega$. 
 Moreover, $ d(x) > 0 \text{ for } x \in \bar{\Omega}$.
\end{enumerate}

Busenberg and Huang \cite{busenberg1996stability} are pioneers in studying the dynamics near nonhomog-\\-eneous steady-state solutions in space. They utilized the perturbation method and the implicit function theorem to study a class of time-delay diffusion Hutchinson equations with Dirichlet boundary value conditions,and established the theoretical framework for the existence of non-homogeneous steady-state solutions and Hopf bifurcations:
\begin{equation*} 
\left\{
\begin{array}{ll}
\displaystyle
u_t = d\Delta u(x,t) + \lambda u(x,t)[1 - u(x,t-\tau)], & x \in \Omega,\ t > 0, \\[2ex]
u(x,t) = 0, & x \in \partial\Omega,\ t > 0.
\end{array}
\right.
\end{equation*}

 In the one-dimensional spatial region $\Omega = (0,\pi)$, they proved that when the parameter $\lambda > d$ and approaches $d$, the unique spatial non-homogeneous positive equilibrium solution will lose stability under large time delays $\tau$ and exhibit Hopf bifurcation, thereby triggering the oscillation behavior of the system. 

Inspired by the research of Busenberg and Huang, Su et al. \cite{su2009hopf} extended the research scope to a class of time-delay reaction-diffusion population models with a more general form of per capita growth rate, and systematically analyzed their Hopf bifurcation behavior. The relevant results of this model in the high-dimensional case can be found in the work of Yan and Li \cite{yan2005}. Subsequently, Chen and Shi \cite{Chen2012} further studied the stability and Hopf bifurcation issues of positive non-homogeneous steady-state solutions in the Logistic-type population model with non-local diffusion terms. Chen and Yu \cite{ChenYu2016} generalized the conclusions in \cite{busenberg1996stability} to non-local time-delay reaction-diffusion population models. Based on this, Jin and Yuan \cite{jin2021}  established the Hopf bifurcation theory for a more general type of reaction-diffusion-migration equations. It is worth noting that Guo and Ma \cite{guo2016} innovatively applied the Lyapunov–Schmidt reduction method to study the dynamical behavior of delay diffusion equations under Dirichlet boundary conditions. One of the significant advantages of this method is that the obtained non-homogeneous steady-state solutions have explicit algebraic expressions, thereby providing important convenience for subsequent stability and bifurcation analysis.

Recently, Liu and Chen \cite{liu2022} considered a delayed rereaction diffusion model with a general advection term:
\begin{equation*}
\begin{cases}
\displaystyle
u_t = \nabla \cdot [d(x) \nabla u - \vec{b}(x) u] + \lambda u[m(x) - a(x) u(t-\tau)], & x \in \Omega, \; t > 0, \\
u = 0, & x \in \partial\Omega, \; t > 0.
\end{cases}
\end{equation*}

it provides important inspiration for our current research. On this basis, we consider the Hopf bifurcation problem of the reaction-diffusion-advection equation with the general advection term and the general per capita growth rate in Model (1.2). This generalization has a broader applicable context. Meanwhile, due to the computational complexity brought about by general nonlinear terms, this generalization is theoretically non-trivial.

The rest of the paper is organized as follows. In Section 2, we provide some preliminary knowledge to facilitate our subsequent proofs.In Section 3.1, we obtained the existence of spatially nonhomogeneous steady-state solution of equation \eqref{delay} by using the Lyapunov-Schmidt reduction. In Section 3.2, we discussed the eigenvalue problem of the infinitesimal generator of the $C_0$-semigroup generated by the solution of the linearized system and presented the corresponding results. In Section 3.3, by using the local Hopf bifurcation theory, we obtained some results on the stability of spatially nonhomogeneous steady-state solutions and the existence of Hopf bifurcation phenomena. In Section 4, we derived an explicit formula which can be used to determine the stability and bifurcation direction of the Hopf bifurcation periodic solutions near the non-homogeneous steady-state solutions.In Section 5.1, we separately considered the weak Allee effect and the "food-limited" population model, and presented some numerical results inspired by our theoretical research. Finally, in Section 5.2, we summarized our theoretical findings.

\section{Some preliminaries}
In this section, we provide a preliminary introduction to the concept of space, its corresponding inner product, and a class of eigenvalue problems. Throughout the paper, we also denote the spaces 
\begin{equation*}
    \mathbb{X}=H^2(\Om)\cap H^1_0(\Om),\quad H_{0}^{1}(\Omega) = \left\{ u \in H^{1}(\Omega) \mid u(x) = 0 \text{ for all } x \in \partial\Omega \right\},\quad \mathbb{Y}=L^2(\Om),
\end{equation*}
and
\begin{equation*}
     C=C([-\tau,0],\mathbb{Y}),\mathcal{C}=C([-1,0],\mathbb{Y}).
\end{equation*}

Moreover, we denote the complexification of a linear space $Z$ to be 
\begin{equation*}
    Z_\mathbb{C}:= Z\oplus iZ=\{x_1+ix_2|~x_1,x_2\in Z\}
\end{equation*}
 the domain of a linear operator $L$ by $\mathscr{D}(L)$, the kernel of $L$ by $\mathscr{N}(L)$, and the range of $L$ by $\mathscr{R}(L)$.
For Hilbert space $Y_{\mathbb C}$, we use
the standard inner product $\langle u,v \rangle=\ds\int_{\Om} \overline
u(x) {v}(x) dx$.
Moreover, for space $X_{\mathbb{C}}$, we use the inner product:
\begin{equation*}
    \langle u, v\rangle_{X} = \sum_{|\alpha|\leq 2} \int_{\Omega} \overline{(D^{\alpha}u)} D^{\alpha}v  dx \quad \text{for } u, v \in X_{\mathbb{C}}
\end{equation*}
where $|\alpha| = \sum_{i=1}^{n} \alpha_{i}$ and $D^{\alpha}u = \frac{\partial^{|\alpha|}u}{\partial x_{1}^{\alpha_{1}} \cdots \partial x_{n}^{\alpha_{n}}}$,
and the norm is $\|u\|_{X_{\mathbb{C}}} = \sqrt{\langle u,u\rangle_{X}}$ for $u \in X_{\mathbb{C}}$.

For space $Y_{\mathbb{C}}$, we use the inner product:
\[
\langle u, v\rangle_{Y_{\mathbb{C}}} = \int_{\Omega} \overline{u}(x) v(x)  dx \quad \text{for } u, v \in Y_{\mathbb{C}},
\]
and the norm is $\|u\|_{Y_{\mathbb{C}}} = \sqrt{\langle u,u\rangle_{Y_{\mathbb{C}}}}$ for $u \in Y_{\mathbb{C}}$.

We cite the following two results from \cite{liu2022} ,\cite{{Cantrell2003}} ,\cite{{Hess1991}},which are crucial for our subsequent proofs.
\begin{lemma}
Assume that $\mathbf{(H_1)}$, $\mathbf{(H_2)}$ holds. Then the eigenvalue problem
\begin{equation}\label{2.5} 
\begin{cases}
    -\nabla \cdot[d(x) \nabla \psi-\vec{b}(x) \psi]  = \la m(x)\psi, & x \in \Omega, \\
    \psi = 0, & x \in \partial\Omega.
\end{cases}     
\end{equation}
admits a unique positive principal eigenvalue $\lambda^{*}$,which is also the principal eigenvalue for
\begin{equation}\label{2.6}
\begin{cases}
    -\nabla \cdot[d(x) \nabla \psi]-\vec{b}(x)\cdot\nabla\psi  = \la m(x)\psi, & x \in \Omega, \\
    \psi = 0, & x \in \partial\Omega.
\end{cases}     
\end{equation}
Let $\phi$ and $\phi^*$ represent the principal eigenfunctions of \eqref{2.5} and \eqref{2.6}, and they satisfy:
\begin{equation*}
    \phi, \phi^{*} > 0 \text{ for } x \in \Omega, \text{ and } \int\limits_{\Omega} \phi  dx = \int\limits_{\Omega} \phi^{*}  dx = 1.
\end{equation*}
\end{lemma}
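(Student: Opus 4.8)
The plan is to read \eqref{2.5} as an indefinite-weight eigenvalue problem for the non-self-adjoint divergence-form operator $L_1\psi:=-\nabla\cdot[d(x)\nabla\psi-\vec{b}(x)\psi]$ under Dirichlet data, and to identify \eqref{2.6} as its $L^2$-adjoint problem. First I would record the adjoint relation explicitly: integrating by parts twice against a test function $v\in\mathbb{X}$ and using $\psi=v=0$ on $\partial\Omega$ yields $\int_\Omega (L_1\psi)\,v\,dx=\int_\Omega\psi\,(L_2 v)\,dx$ with $L_2 v=-\nabla\cdot[d(x)\nabla v]-\vec{b}(x)\cdot\nabla v$, which is precisely the operator in \eqref{2.6}. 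Thus $L_2=L_1^{*}$ and the two problems are mutually adjoint; this structural fact is what ultimately forces them to share a single principal eigenvalue.

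Because $L_1$ is not self-adjoint and $m$ is only assumed to satisfy $\max_{\overline{\Omega}}m>0$ (so $m$ may change sign), no Rayleigh-quotient characterization is available and the weighted problem cannot be recast directly as a positive compact operator. Instead I would introduce the auxiliary family $\lambda\mapsto\sigma_1(\lambda)$, where $\sigma_1(\lambda)$ is the principal eigenvalue of the elliptic operator $L_1-\lambda m$ with Dirichlet data. For each fixed $\lambda$, adding a large constant $k$ makes $L_1-\lambda m+k$ invertible with a compact, strongly positive resolvent on $C(\overline{\Omega})$ (by the strong maximum principle and elliptic regularity guaranteed by $\mathbf{(H_2)}$); the Krein--Rutman theorem then provides a simple principal eigenvalue $\sigma_1(\lambda)$ with a positive eigenfunction. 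Standard perturbation arguments show that $\lambda\mapsto\sigma_1(\lambda)$ is continuous and concave.

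Existence and positivity of $\lambda^{*}$ would then follow by solving $\sigma_1(\lambda^{*})=0$. The key input is $\sigma_1(0)>0$: since $L_2\mathbf{1}=0$ in $\Omega$ while $\mathbf{1}>0$ on $\partial\Omega$, the constant function is a positive strict supersolution for the Dirichlet problem of $L_2$, which by the maximum principle forces its principal eigenvalue to be strictly positive, and $\sigma_1(0)=\sigma_1(L_1)=\sigma_1(L_2)>0$ because adjoint operators share the principal eigenvalue. Since $\max m>0$, choosing a test function supported where $m>0$ shows $\sigma_1(\lambda)\to-\infty$ as $\lambda\to+\infty$; continuity and concavity then give a unique positive root $\lambda^{*}$, and the associated eigenfunction $\phi$ of $L_1-\lambda^{*}m$ (equivalently of \eqref{2.5}) is positive and simple by Krein--Rutman. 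Running the identical argument for $L_2=L_1^{*}$ produces a positive principal eigenfunction $\phi^{*}$ for \eqref{2.6}; and since $(L_1-\lambda m)^{*}=L_2-\lambda m$ for every $\lambda$, adjointness gives $\sigma_1(L_1-\lambda m)=\sigma_1(L_2-\lambda m)$, so the two problems have the same principal eigenvalue $\lambda^{*}$. Finally, as $\phi,\phi^{*}>0$ have strictly positive integrals, rescaling achieves $\int_\Omega\phi\,dx=\int_\Omega\phi^{*}\,dx=1$.

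The main obstacle I anticipate is exactly the combination of non-self-adjointness with the indefinite weight: one cannot minimize a quadratic form, so simplicity, positivity, and the monotonicity/concavity of $\sigma_1$ must all be routed through the maximum principle and Krein--Rutman rather than through the spectral theory of symmetric operators. The genuinely delicate conceptual step is verifying $\sigma_1(0)>0$ through the constant supersolution for the advection operator $L_2$ and transferring it to $L_1$ via the adjoint identity; the regularity hypotheses in $\mathbf{(H_2)}$ enter only to secure compactness and strong positivity of the resolvents. Alternatively, all of these conclusions may be quoted directly from the principal-eigenvalue theory for non-self-adjoint elliptic operators with indefinite weights developed in \cite{Hess1991}, \cite{Cantrell2003}, and \cite{liu2022}.
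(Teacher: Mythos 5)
The paper offers no proof of this lemma: it is quoted directly from \cite{liu2022}, \cite{Cantrell2003} and \cite{Hess1991}, exactly as you allow for in your closing sentence, so your proposal is consistent with the paper's treatment and your sketch is a correct reconstruction of the standard Hess--Kato argument behind those references (adjointness of the two problems, Krein--Rutman for the shifted resolvent, concavity of $\lambda\mapsto\sigma_1(\lambda)$, positivity of $\sigma_1(0)$ via the constant supersolution). The one loose phrase is ``choosing a test function supported where $m>0$'' to obtain $\sigma_1(\lambda)\to-\infty$: since no Rayleigh quotient is available in the non-self-adjoint setting, this step should instead be justified by domain monotonicity of the principal eigenvalue together with its monotonicity in the zeroth-order coefficient on a small ball where $m\ge\delta>0$.
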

\begin{lemma}\label{lemma2.2}
Assume that $\mathbf{(H_1)}$, $\mathbf{(H_2)}$ holds, and let  $\lambda_{*}$ and $\phi$ and $\phi^{*}$ be defined in Lemma 2.1, then
\begin{equation*}
\frac{\int_{\Omega}m(x)\phi\phi^{*}dx}{\int_{\Omega}\phi\phi^{*}dx}>0,
\end{equation*}  
\end{lemma}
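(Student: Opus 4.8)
The plan is to reduce the sign of the quotient to the sign of the derivative of a principal eigenvalue along a one–parameter family. Since $\phi>0$ and $\phi^{*}>0$ in $\Omega$, the denominator $\int_{\Omega}\phi\phi^{*}\,dx$ is automatically positive, so everything rests on showing $\int_{\Omega}m\phi\phi^{*}\,dx>0$. Writing $\mathcal{L}u:=-\nabla\cdot[d(x)\nabla u-\vec b(x)u]$ for the operator in \eqref{2.5} (whose formal adjoint under the Dirichlet $L^{2}$ pairing is exactly the operator $\mathcal{L}^{*}u=-\nabla\cdot[d(x)\nabla u]-\vec b(x)\cdot\nabla u$ appearing in \eqref{2.6}), I would introduce the affine family $\mathcal{L}-\lambda m$ and let $\mu(\lambda)$ denote its principal Dirichlet eigenvalue, with positive principal eigenfunction $\phi_{\lambda}$ and positive adjoint eigenfunction $\phi_{\lambda}^{*}$. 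By Lemma 2.1 one has $\mu(\lambda_{*})=0$, and we may take $\phi_{\lambda_{*}}=\phi$, $\phi_{\lambda_{*}}^{*}=\phi^{*}$.

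The first key step is the derivative formula
\begin{equation*}
\mu'(\lambda)=-\frac{\int_{\Omega}m\,\phi_{\lambda}\phi_{\lambda}^{*}\,dx}{\int_{\Omega}\phi_{\lambda}\phi_{\lambda}^{*}\,dx}.
\end{equation*}
I would obtain it by differentiating $(\mathcal{L}-\lambda m)\phi_{\lambda}=\mu(\lambda)\phi_{\lambda}$ in $\lambda$, pairing against $\phi_{\lambda}^{*}$ in $\mathbb{Y}$, and transferring $\mathcal{L}$ onto $\phi_{\lambda}^{*}$ through the adjoint relation $\langle\phi_{\lambda}^{*},\mathcal{L}v\rangle=\langle\mathcal{L}^{*}\phi_{\lambda}^{*},v\rangle=\langle(\mu(\lambda)+\lambda m)\phi_{\lambda}^{*},v\rangle$; all terms containing the unknown derivative $\phi_{\lambda}'$ then cancel, leaving only $-\langle\phi_{\lambda}^{*},m\phi_{\lambda}\rangle=\mu'(\lambda)\langle\phi_{\lambda}^{*},\phi_{\lambda}\rangle$. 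Differentiability of $\mu$ and of the eigenfunctions is guaranteed by the algebraic simplicity of the principal eigenvalue (Krein–Rutman together with analytic perturbation theory). Evaluating at $\lambda_{*}$ shows that the assertion of the lemma is exactly equivalent to $\mu'(\lambda_{*})<0$.

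It then remains to establish two qualitative properties of $\mu$. First, $\mu(0)>0$: the adjoint operator $\mathcal{L}^{*}$ carries no zeroth–order term, so by the maximum principle its principal Dirichlet eigenvalue is strictly positive, and since a (real, simple) principal eigenvalue is shared by an operator and its adjoint, $\mu(0)=\mu_{1}(\mathcal{L})=\mu_{1}(\mathcal{L}^{*})>0$. Second, $\lambda\mapsto\mu(\lambda)$ is concave, the standard property of the principal eigenvalue of an operator depending affinely on a real parameter. Granting these, the chord of $\mu$ joining $0$ and $\lambda_{*}$ has slope $(\mu(\lambda_{*})-\mu(0))/\lambda_{*}=-\mu(0)/\lambda_{*}<0$, and since $\mu'$ is nonincreasing we conclude $\mu'(\lambda_{*})\le-\mu(0)/\lambda_{*}<0$. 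Hence
\begin{equation*}
\frac{\int_{\Omega}m\phi\phi^{*}\,dx}{\int_{\Omega}\phi\phi^{*}\,dx}=-\mu'(\lambda_{*})>0,
\end{equation*}
which is the claim.

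I expect the main obstacle to lie in the two structural facts of the last paragraph rather than in the formal manipulation: establishing $\mu(0)>0$ and the concavity of $\mu$ cleanly in this non–self–adjoint setting, where no Rayleigh quotient is available to characterize $\mu$. Both can be extracted from the indefinite–weight principal–eigenvalue theory in \cite{Cantrell2003,Hess1991}; moreover $\mu(0)>0$ is implicit already in Lemma 2.1, since a positive principal eigenvalue $\lambda_{*}>0$ of the weighted problem \eqref{2.5} exists precisely when the unweighted operator has positive principal eigenvalue. A more computational alternative, namely trying to prove the identity $\int_{\Omega}\bigl(d\nabla\phi\cdot\nabla\phi^{*}-\phi\,\vec b\cdot\nabla\phi^{*}\bigr)\,dx=\lambda_{*}\int_{\Omega}m\phi\phi^{*}\,dx$ and then showing its left-hand side is positive directly, does not appear workable, since that bilinear form has no definite sign once $\vec b$ and $\nabla\cdot\vec b$ are left unrestricted.
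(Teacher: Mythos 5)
Your argument is correct. The paper does not actually prove Lemma 2.2 — it is stated as a result imported from \cite{liu2022}, \cite{Cantrell2003} and \cite{Hess1991} — and your proof is precisely the standard argument underlying those citations: the positivity of the denominator from $\phi,\phi^{*}>0$, the derivative formula $\mu'(\lambda)=-\int_{\Omega}m\,\phi_{\lambda}\phi_{\lambda}^{*}\,dx\big/\int_{\Omega}\phi_{\lambda}\phi_{\lambda}^{*}\,dx$ obtained by pairing with the adjoint eigenfunction, and the two structural facts $\mu(0)>0$ (maximum principle for the zeroth-order-free adjoint) and concavity of $\lambda\mapsto\mu(\lambda)$ (Kato's convexity of the spectral bound for positive semigroups, as presented in Hess's book), which together force $\mu'(\lambda_{*})<0$ at the unique positive zero $\lambda_{*}$.
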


\section {Steady-State Bifurcation}
\subsection{ Existence of Steady-State Solution}

\quad In this section, we first consider the existence of positive steady states of Eq.
\eqref{delay}, which satisfy:
\begin{equation} \label{steady}
\begin{cases}
\nabla \cdot[d(x) \nabla u-\vec{b}(x) u]+
\la uf\left(x, u\right)=0,&x\in\Om,\ t>0,\\
u(x,t)=0,&  x\in\partial\Om,\ t>0.\\
\end{cases}
\end{equation}

Define $F:\mathbb{X}\times\mathbb{R}\to\mathbb{Y}$ by

\begin{equation*}
F(u,\lambda)=\nabla \cdot[d(x) \nabla u-\vec{b}(x) u]+
\la uf\left(x, u\right)
\end{equation*}
\begin{equation}\label{LL}
L_{\lambda}\phi:=\nabla \cdot[d(x) \nabla \phi-\vec{b}(x) \phi] +\la m(x)\phi,
\end{equation}
and the adjoint operator $L^*_{\lambda}$ of $L_{\lambda}$ is as follows:
\begin{equation*}\label{L*}
L^*_{\lambda}\phi:=\nabla \cdot[d(x) \nabla \phi]+\vec{b}(x)\cdot\nabla\phi +\la m(x)\phi,
\end{equation*}

Note that
\begin{equation*}
      \mathbb{X}  =  \mathscr{N}\left(L_{\lambda_{*}}\right)\oplus X_1,\;\;
      \mathbb{Y}  = \mathscr{N}\left(L^{*}_{\lambda_{*}}\right)\oplus Y_1,
\end{equation*}
where
\begin{equation*}\label{L}
\begin{split}
\mathscr{N}\left(L_{\lambda_{*}}\right)=&\text{span}\{\phi\},\;\;
X_1=\left\{y\in X:\int_{\Om}\phi(x) y(x)dx=0\right\},\\
Y_1=&\mathscr{R}\left(L_{\lambda_{*}}\right)=\left\{y\in Y:\int_{\Om}\phi^{*}(x)
y(x)dx=0\right\}.
\end{split}
\end{equation*}

Now, we use Lyapunov-Schmidt reduction methods as follows. Let $Q$ and $I-Q$ denote the projection operators from $\mathbb{Y}$ onto $Y_1$ and $\mathscr{N}\left(L^{*}_{\lambda_{*}}\right)$, respectively. Thus, $F(u,\lambda)=0$ is equivalent to the following system:
\begin{equation}\label{project}
\left\{
    \begin{aligned}
        & Q F(u_1 + u_2, \lambda) = 0, \\
        & (I - Q) F(u_1 + u_2, \lambda) = 0.
    \end{aligned}
\right.
\end{equation}

where $u_{1}\in\mathscr{N}\left(L_{\lambda_{*}}\right)$ and $u_{2}\in X_1$. Notice that $F(0,\lambda_{*})=0$ and $QF_{u_{2}}(0,\lambda_{*})=L_{\lambda_{*}}$. Applying the implicit function theorem, we obtain a continuously differentiable map $h:\mathfrak{U}\rightarrow {X}_{1}$ such that

\begin{equation*}\label{2.61}
h(0,\lambda)=0\quad,h_u^{\prime}(0,\lambda_*)=0\quad\text{and}\quad QF(u_{1}+h(u_{1},\lambda),\lambda)\equiv 0,
\end{equation*}

where $\mathfrak{U}$ is an open neighborhood of $(0,\lambda_{*})$ in $\mathscr{N}\left(L_{\lambda_{*}}\right)\times\mathbb{R}$. Substituting $u_{2}=h(u_{1},\lambda)$ into the second equation of \eqref{project} gives

\begin{equation*}
\mathfrak{F}(u_{1},\lambda):=(I-Q)F(u_{1}+h(u_{1},\lambda),\lambda)=0.
\end{equation*}

Thus, each solution to $\mathfrak{F}(u_{1},\lambda)=0$ in $\mathfrak{U}$ one-to-one corresponds to some solution to $F(u,\lambda)=0$.

For $u_{1}=t\phi\in\mathscr{N}\left(L_{\lambda_{*}}\right)$ with $t\in\mathbb{R}$, substituting this into (2.5) and then calculating the inner product with $\phi^{*}$ on $\Omega$, we have $\beta( t,\lambda)=0$, where $\beta:\mathbb{R}^{2}\rightarrow\mathbb{R}$ is explicitly given by:
\begin{equation*}
\begin{aligned}
&\beta( t,\lambda) := \langle\phi^{*}, F(t\phi(x)+h(t\phi(x),\lambda),\lambda) \rangle 
\\&=\int_{\Omega}\phi^{*}(x)F(t\phi(x)+h(t\phi(x),\lambda),\lambda)\mathrm{d}x.    
\end{aligned}
\end{equation*}

Noting that $\beta( 0,\lambda) = 0$,$\beta^{\prime}( 0,\lambda^*) = 0$ we take the following function
\begin{equation*}
H( t,\lambda) =
\begin{cases}
\frac{\beta( t,\lambda)}{t} & \text{if } t \neq 0 \\
\beta_{t}^{\prime}(\lambda, 0) & \text{if } t = 0
\end{cases}
\end{equation*}

After a simple calculation, it can be obtained that:
\begin{equation*}
 H_{\lambda}^{\prime}( 0,\lambda^{*}) = \beta_{t,\lambda}^{\prime\prime}( 0,\lambda^{*}) = \langle \phi^{*}, F_{u,\lambda}^{\prime\prime}( 0,\lambda^{*})[\phi] \rangle=
\int_{\Omega} m(x)\phi\phi^{*}dx\stackrel{\text{def}}{=}a
\end{equation*}

From Lemma 2.2, we know that $H_{\lambda}^{\prime}(0,\lambda^{*})\neq 0$. Applying the implicit function theorem to $h$ we find $\lambda=\lambda(t)$, defined in an $\varepsilon$-neighbourhood of $t=0$, such that
\begin{equation*}
\lambda(0)=\lambda^{*},\quad H(t,\lambda(t))=0,\quad\forall -\varepsilon\leq t\leq\varepsilon.    
\end{equation*}
\begin{equation*}
\lambda^{\prime}(0)=-\frac{H_{t}^{\prime}(0,\lambda^{*})}{H_{\lambda}^{\prime}(0,\lambda^{*})}.
\end{equation*}

In addition, one easily finds that
\begin{equation*}
H_{t}^{\prime}(0,\lambda^{*})=\frac{1}{2}\beta_{t,t}^{\prime\prime}(0,\lambda^{*})=\frac{1}{2}\langle\phi^{*}, F_{u,u}^{\prime\prime}(0,\lambda^{*})[\phi]^{2}\rangle=
\lambda_{*} \int_{\Omega} f^{\prime}_u(x, 0) \phi^{2} \phi^{*}  \mathrm{d} x\stackrel{\text{def}}{=}b.    
\end{equation*}

Hence, if $b\neq 0$ we get
\begin{equation*}
\lambda(t)=\lambda^{*}-\frac{b}{a}t+o(t),\quad\text{as } t\to 0.    
\end{equation*}

 If $b = 0$ , one gets
\begin{equation*}
\begin{aligned}
\lambda^{\prime\prime}(0) &= -\frac{1}{3a} (\langle \phi^{*}, F_{u,u,u}^{\prime\prime\prime}( 0,\lambda^{*})[\phi]^{3} \rangle+3\langle \phi^{*}, F_{u,u}^{\prime\prime}( 0,\lambda^{*}) h_{u_{1}u_{1}}^{\prime\prime}(0,\lambda_*)[\phi]^{3} \rangle)
\\&=-\frac{1}{3a}(3\lambda_{*} \int_{\Omega} f^{\prime\prime}_{uu}(x, 0) \phi^{3} \phi^{*}  \mathrm{d} x+6\lambda_{*} \int_{\Omega} f^{\prime}_u(x, 0) h_{u_{1}u_{1}}^{\prime\prime}(0,\lambda_*)\phi^{3} \phi^{*}  \mathrm{d} x).    
\end{aligned}
\end{equation*}

We still need to compute $h_{u_{1}u_{1}}^{\prime\prime}(0,\lambda_*)$. It follows from Eq. \eqref{LL} that  
\begin{equation*}
    {L}_{\lambda_{*}} h_{u_{1}u_{1}}^{\prime\prime}(0,\lambda_*)[\phi]^{2} 
     + Q F_{uu}^{\prime\prime}(\phi, \lambda_{*}) = 0
\end{equation*}
 and hence that $h_{u_{1}u_{1}}^{\prime\prime}(0,\lambda_*)[\phi]^{2} = -2\la_* f^{\prime}_u(x, 0) {L}_{\lambda_{*}}^{-1} [\phi^{2}]$.

Thus, if $b = 0$,$\lambda^{\prime\prime}(0) \neq 0$ one finds
\begin{equation*}
u = \pm \left( \frac{2(\lambda - \lambda^{*})}{\lambda^{\prime\prime}(0)} \right)^{1/2} \phi + O(\lambda - \lambda^{*})   
\end{equation*}

Thus, we have the following result.

\begin{theorem}\label{2.1}
If $\ds\int_{\Om}f^{\prime}_{u}(x, 0)\phi^2\phi^{*}dx \neq 0$, then there exist a constant $\epsilon > 0$ and a continuously differentiable mapping $\lambda \to t_\lambda$ from $(\lambda_* - \epsilon, \lambda_* + \epsilon)$ to $\mathbb{R}$ such that Eq. \eqref{steady} has a nontrivial solution
\begin{equation*}
    u_\lambda(x) = t_\lambda \phi(x) + h(t_\lambda \phi(x), \lambda)
\end{equation*}
where $t_\lambda=(\la-\la_*)\beta_{\la}+ o(\lambda - \lambda^{*})$ satisfies $\lim_{\lambda \to \lambda_*} t_\lambda = 0$, which exists for $\lambda \in (\lambda_{*} - \epsilon, \lambda_{*}) \cup (\lambda_{*}, \lambda_{*} + \epsilon)$ and satisfies
\begin{equation*}
\lim_{\lambda \to \lambda_{*}} u_{\lambda} = 0
\end{equation*}

Moreover, for $\la=\la_*$,
\begin{equation}\label{al}
\beta_{\la_*}
=\ds\f{\ds\int_{\Om}m(x)\phi\phi^{*}dx}{-\la_*\ds\int_{\Om}f^{\prime}_{u}(x, 0)\phi^2\phi^{*}dx},
\end{equation}

\end{theorem}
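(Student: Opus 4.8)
The plan is to read the statement as the endpoint of the Lyapunov--Schmidt reduction already set up: every small solution of $F(u,\lambda)=0$ with $u$ near $0$ and $\lambda$ near $\lambda_*$ has the form $u=t\phi+h(t\phi,\lambda)$, and such a $u$ solves \eqref{steady} precisely when the scalar bifurcation function vanishes, i.e. when $H(t,\lambda)=0$. Thus the whole theorem reduces to solving the single scalar equation $H(t,\lambda)=0$ for $t$ as a function of $\lambda$ near the trivial branch point $(t,\lambda)=(0,\lambda_*)$. My first step is to record that $(0,\lambda_*)$ is a zero of $H$: since $F(0,\lambda)\equiv 0$ we have $\beta(0,\lambda)\equiv 0$, so $H(0,\lambda)=\beta_t'(0,\lambda)$, and the identity $\beta_t'(0,\lambda_*)=0$ noted earlier gives $H(0,\lambda_*)=0$.

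The second step is to verify that $H$ is continuously differentiable (in fact as regular as $F$ and $h$ permit) on a neighbourhood of $(0,\lambda_*)$, which is the one genuinely delicate point in an otherwise routine argument. The difficulty is only apparent: because $\beta(0,\lambda)\equiv 0$, Hadamard's lemma (writing $\beta(t,\lambda)=t\int_0^1 \beta_t'(st,\lambda)\,ds$) shows that the quotient $\beta(t,\lambda)/t$ extends across $t=0$ to a function as smooth as $\beta$, with value $\beta_t'(0,\lambda)$ at $t=0$; this is exactly the piecewise definition of $H$, so no regularity is lost there. With smoothness in hand I would use the partial derivative that controls the inversion,
\[
H_t'(0,\lambda_*)=\tfrac12\,\beta_{tt}''(0,\lambda_*)=\lambda_*\int_\Omega f_u'(x,0)\,\phi^2\phi^*\,dx =: b ,
\]
and observe that the hypothesis $\int_\Omega f_u'(x,0)\phi^2\phi^*\,dx\neq 0$ is exactly the condition $b\neq 0$.

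With $H(0,\lambda_*)=0$ and $H_t'(0,\lambda_*)=b\neq 0$, the implicit function theorem applied to $H$ (solving for $t$ in terms of $\lambda$, which is more economical than first finding $\lambda(t)$ and inverting) produces $\epsilon>0$ and a $C^1$ map $\lambda\mapsto t_\lambda$ on $(\lambda_*-\epsilon,\lambda_*+\epsilon)$ with $t_{\lambda_*}=0$ and $H(t_\lambda,\lambda)\equiv 0$; then $u_\lambda=t_\lambda\phi+h(t_\lambda\phi,\lambda)$ solves \eqref{steady} by the correspondence above. Differentiating $H(t_\lambda,\lambda)\equiv 0$ at $\lambda_*$ gives
\[
\frac{d t_\lambda}{d\lambda}\Big|_{\lambda_*}=-\frac{H_\lambda'(0,\lambda_*)}{H_t'(0,\lambda_*)}=-\frac{a}{b}=\frac{\int_\Omega m(x)\phi\phi^*\,dx}{-\lambda_*\int_\Omega f_u'(x,0)\phi^2\phi^*\,dx}=:\beta_{\lambda_*},
\]
where $a=H_\lambda'(0,\lambda_*)=\int_\Omega m\phi\phi^*\,dx$ as computed earlier. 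This is simultaneously the Taylor coefficient in $t_\lambda=(\lambda-\lambda_*)\beta_{\lambda_*}+o(\lambda-\lambda_*)$ and the claimed formula \eqref{al}.

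Finally I would settle nontriviality and the limit. By Lemma~\ref{lemma2.2} we have $a>0$, and $b\neq 0$ by hypothesis, so $\beta_{\lambda_*}\neq 0$; hence $t_\lambda=(\lambda-\lambda_*)\beta_{\lambda_*}+o(\lambda-\lambda_*)$ is nonzero for $0<|\lambda-\lambda_*|<\epsilon$ (after shrinking $\epsilon$), which forces $u_\lambda\neq 0$ there, while $\lim_{\lambda\to\lambda_*}u_\lambda=0$ follows from $t_{\lambda_*}=0$, continuity of $\lambda\mapsto t_\lambda$, and $h(0,\lambda_*)=0$. The main obstacle is not this final inversion but the structural input it rests on: because the elliptic operator is non-self-adjoint, the validity of the reduction itself (the splittings $\mathbb{X}=\mathscr{N}(L_{\lambda_*})\oplus X_1$ and $\mathbb{Y}=\mathscr{N}(L^*_{\lambda_*})\oplus Y_1$ with $Y_1=\mathscr{R}(L_{\lambda_*})$, and the invertibility of $L_{\lambda_*}\colon X_1\to Y_1$ needed to define $h$) hinges on the simplicity of the principal eigenvalue $\lambda_*$ from Lemma~2.1 and on correctly distinguishing the two eigenfunctions $\phi$ and $\phi^*$ of problems \eqref{2.5}--\eqref{2.6}. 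Once that Fredholm structure is granted, the bifurcation analysis is standard.
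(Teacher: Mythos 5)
Your proposal is correct and stays inside the paper's own Lyapunov--Schmidt framework: the same splittings $\mathbb{X}=\mathscr{N}(L_{\lambda_*})\oplus X_1$ and $\mathbb{Y}=\mathscr{N}(L^*_{\lambda_*})\oplus Y_1$, the same reduced map $h$, the same scalar bifurcation function $H(t,\lambda)=\beta(t,\lambda)/t$, and the same two derivatives $a=H_\lambda'(0,\lambda_*)=\int_\Omega m(x)\phi\phi^*\,dx$ and $b=H_t'(0,\lambda_*)=\lambda_*\int_\Omega f_u'(x,0)\phi^2\phi^*\,dx$. The one place you diverge is the final step: the paper applies the implicit function theorem to solve $H(t,\lambda)=0$ for $\lambda=\lambda(t)$, using $a\neq 0$ (which Lemma~\ref{lemma2.2} supplies unconditionally), computes $\lambda'(0)=-b/a$, and only then uses the hypothesis $b\neq 0$ to invert $t\mapsto\lambda(t)$; you apply the implicit function theorem once, directly in the $t$-variable, using $H_t'(0,\lambda_*)=b\neq 0$. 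For Theorem~\ref{2.1} your route is the more economical of the two and produces the same expansion $t_\lambda=(\lambda-\lambda_*)\beta_{\lambda_*}+o(\lambda-\lambda_*)$ with $\beta_{\lambda_*}=-a/b$, i.e.\ formula \eqref{al}. What the paper's ordering buys is uniformity: parametrizing the branch by $t$ lets the same computation continue into the degenerate case $b=0$, where one computes $\lambda''(0)$ and obtains the two branches $t_\lambda^{\pm}$ of Theorem~\ref{2.3}, a case your direct inversion cannot reach. Your explicit Hadamard-lemma justification that $H$ extends smoothly across $t=0$ is a detail the paper leaves implicit, and it is a welcome addition.
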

\begin{remark}
Since $\phi_{1}>0$ on $\Omega$, the sign of the spatially nonhomogeneous steady-state solution $u_{\lambda}$ in Theorem \ref{2.1} coincides with the sign of $t_\lambda$: $u_{\lambda}$ is positive when $t_\lambda >0$  and negative when $t_\lambda <0$ . For the representative case $f(x,u)=m(x)-u$ (where $f^{\prime}_{u}(x, 0)=-1$), the solution is positive for $\lambda$ $\in$ ($\lambda_{*}, \lambda_{*} + \epsilon$). From a biological standpoint, only positive steady-state solutions are of interest.
\end{remark}

\begin{theorem}\label{2.3}
If $\ds\int_{\Om}f^{\prime}_{u}(x, 0)\phi^2\phi^{*}dx =0$ and $\lambda^{\prime\prime}(0)>0$ (respectively, $\lambda^{\prime\prime}(0)<0$), then there exist a constant $\lambda^{*}>\la_*$ (respectively, $\lambda^{*}<\la_*$) and two continuously differentiable mappings $\lambda\to t_{\lambda}^{\pm}$ from $(\la_*,\lambda^{*}]$ to $\mathbb{R}$ (respectively, from $[\lambda^{*},\la_*]$ to $\mathbb{R}$) such that Eq. (2.1) has two nontrivial solutions
\begin{equation*}
u_{\lambda}^{\pm}(x)=t_{\lambda}^{\pm}\phi(x)+h\left(t_{\lambda}^{\pm}\phi(x),\lambda\right),    
\end{equation*}
and satisfies $\lim_{\lambda\to\la_*}u_{\lambda}^{\pm}=0.$
\end{theorem}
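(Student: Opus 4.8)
The plan is to reuse the one–dimensional bifurcation function already produced by the Lyapunov--Schmidt reduction and to invert it in the degenerate regime $b=0$. Recall that solving $F(u,\lambda)=0$ near $(0,\lambda_*)$ has been reduced to the scalar equation $H(t,\lambda)=0$, and, because $H'_\lambda(0,\lambda_*)=a=\int_\Omega m(x)\phi\phi^*\,dx\neq 0$ by Lemma~\ref{lemma2.2}, the implicit function theorem yields a smooth curve $\lambda=\lambda(t)$ with $\lambda(0)=\lambda_*$ along which every nearby solution is $u=t\phi+h(t\phi,\lambda(t))$. Under the present hypothesis $\int_\Omega f'_u(x,0)\phi^2\phi^*\,dx=0$ we have $b=0$, hence $\lambda'(0)=-b/a=0$, so the curve has a horizontal tangent at $t=0$ and the implicit function theorem can no longer be used to solve for $t$ in terms of $\lambda$ directly. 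The whole point is therefore to invert $\lambda(t)$ near a nondegenerate critical point, which is exactly where the hypothesis $\lambda''(0)\neq 0$ enters.

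First I would record the local normal form of $\lambda(t)$. Since $\lambda(0)=\lambda_*$, $\lambda'(0)=0$ and $\lambda$ is sufficiently smooth (guaranteed by $\mathbf{(H_1)}$), Taylor's theorem in integral form gives
\begin{equation*}
\lambda(t)-\lambda_*=t^2 g(t),\qquad g(t):=\int_0^1(1-s)\,\lambda''(st)\,ds,
\end{equation*}
where $g$ is of class $C^1$ and $g(0)=\tfrac12\lambda''(0)$. Assume first $\lambda''(0)>0$ (the case $\lambda''(0)<0$ is symmetric). Then $g(0)>0$, so $g>0$ on some interval $(-\delta,\delta)$, and I set
\begin{equation*}
w(t):=t\sqrt{g(t)}.
\end{equation*}
Because $g$ stays positive, $w$ is $C^1$ on $(-\delta,\delta)$ with $w(0)=0$ and $w'(0)=\sqrt{g(0)}>0$; this is simply the one–dimensional Morse normal form $\lambda(t)-\lambda_*=w(t)^2$.

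Next I would invert by the inverse function theorem. Since $w'(0)\neq 0$, $w$ is a strictly increasing $C^1$-diffeomorphism from a neighbourhood of $0$ onto a neighbourhood of $0$; write $W$ for its inverse. Fix a small $\delta'\in(0,\delta)$ and put $\lambda^*:=\lambda_*+w(\delta')^2>\lambda_*$. For each $\lambda\in(\lambda_*,\lambda^*]$ set $\mu:=\sqrt{\lambda-\lambda_*}>0$; then $w(t)^2=\mu^2$ has on $(-\delta,\delta)$ exactly the two roots
\begin{equation*}
t_\lambda^+:=W(\mu)>0,\qquad t_\lambda^-:=W(-\mu)<0,
\end{equation*}
which are distinct and nonzero for $\lambda>\lambda_*$. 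On the half–open interval $(\lambda_*,\lambda^*]$ the map $\lambda\mapsto\sqrt{\lambda-\lambda_*}$ is smooth and $W$ is $C^1$, so $\lambda\mapsto t_\lambda^\pm$ is $C^1$; moreover $t_\lambda^\pm\sim\pm\sqrt{(\lambda-\lambda_*)/g(0)}\to 0$ as $\lambda\to\lambda_*^+$, in agreement with the leading term $\pm(2(\lambda-\lambda_*)/\lambda''(0))^{1/2}$ already recorded before the statement. Substituting back yields the two nontrivial branches $u_\lambda^\pm=t_\lambda^\pm\phi+h(t_\lambda^\pm\phi,\lambda)$, and since $t_\lambda^\pm\to 0$ and $h(0,\lambda)=0$ we obtain $\lim_{\lambda\to\lambda_*}u_\lambda^\pm=0$. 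The case $\lambda''(0)<0$ is handled identically after writing $\lambda_*-\lambda=t^2(-g(t))$ with $-g(0)>0$, producing the two branches on $[\lambda^*,\lambda_*)$ with $\lambda^*<\lambda_*$.

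The main obstacle is precisely the degeneracy $\lambda'(0)=0$: the ordinary implicit function theorem fails, and one must instead extract a genuine square root of $\lambda(t)-\lambda_*$. The delicate point is to exhibit the $C^1$ change of variable $w(t)=t\sqrt{g(t)}$, whose existence hinges on $g$ not vanishing, i.e.\ on the strict sign of $\lambda''(0)$; this both guarantees that two distinct branches bifurcate to one side of $\lambda_*$ and pins down the direction of bifurcation. A secondary technical check is that the reduced map $\lambda(t)$ is regular enough (here $C^3$, which follows from the fourth–order smoothness of $f$ in $\mathbf{(H_1)}$) for $g$, and hence $w$ and $W$, to be $C^1$, so that the branches $t_\lambda^\pm$ genuinely inherit continuous differentiability on the half–open interval.
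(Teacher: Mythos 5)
Your proposal is correct and follows essentially the same route as the paper: both rely on the Lyapunov--Schmidt reduction to the scalar curve $\lambda=\lambda(t)$ with $\lambda'(0)=0$ and then invert it at the nondegenerate quadratic tangency, which is exactly how the paper arrives at $u=\pm\left(2(\lambda-\lambda^{*})/\lambda''(0)\right)^{1/2}\phi+O(\lambda-\lambda^{*})$. Your write-up merely makes the inversion explicit (via $\lambda(t)-\lambda_{*}=w(t)^{2}$ with $w(t)=t\sqrt{g(t)}$ and the inverse function theorem), supplying details the paper leaves implicit.
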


\begin{remark}
We see that one of the two spatially nonhomogeneous steady-state solutions $u_{\lambda}^{\pm}$ established by throrem \ref{2.3} is positive and the other is negative .
\end{remark}
\subsection{Eigenvalue Problem}

 For $\epsilon > 0$, define $\Lambda_{1\epsilon} = (\lambda_*-\epsilon,\lambda_*)$ and $\Lambda_{2\epsilon} = (\lambda_*,\lambda_*+\epsilon)$. By Theorems \ref{2.1} and \ref{2.3}, there exists $\epsilon > 0$ such that \eqref{steady} has a spatially nonhomogeneous steady-state solution $u_{\lambda}$ for $\lambda \in \Lambda$, where $\Lambda = \Lambda_{1\epsilon} \cup \Lambda_{2\epsilon}$  if  $\int_{\Omega}f^{\prime}_{u}(x, 0)\phi^2\phi^{*}dx \neq 0$, and if $\int_{\Omega}f^{\prime}_{u}(x, 0)\phi^2\phi^{*}dx = 0$, then $\Lambda = \Lambda_{2\epsilon}(\Lambda_{1\epsilon})$  when $\lambda^{\prime\prime}(0)>0(<0)$.

In subsequent sections, $u_{\lambda}(x)$ denotes this steady-state solution for $\lambda \in \Lambda$.
\begin{equation*}
u_{\lambda}(x)=t_{\lambda}\phi(x)+h(t_{\lambda}\phi(x),\lambda),
\end{equation*}

Linearizing system \eqref{delay} at $u_\la$, we have
\begin{equation*}
\label{linear}\begin{cases}
\    \begin{aligned}
        v_t
        &= \nabla \cdot \Big[d(x) \nabla v - \vec{b}(x) v\Big] + \lambda f(x, u_{\lambda}) v(x, t) \\
        &\quad + \lambda u_{\lambda} f'(x, u_{\lambda}) v(x, t-\tau),
    \end{aligned} 
    & x \in \Omega,\ t > 0, \\[3ex]
    v(x, t) = 0, 
    & x \in \partial \Omega,\ t > 0.
\end{cases}
\end{equation*}

It follows from \cite{wu1996theory} that the solution semigroup of
Eq. \eqref{linear} has the infinitesimal generator $A_\tau(\la)$
satisfying
\begin{equation*}\label{Ataula}
A_\tau(\la) \Psi=\dot\Psi,
\end{equation*}
where
\begin{equation*}
\begin{aligned}
\mathscr{D}\left(A_{\tau}(\lambda)\right) = \left\{ \Psi \in C_{\mathbb{C}} \cap C_{\mathbb{C}}^{1} : \Psi(0) \in X_{\mathbb{C}},\ \dot{\Psi}(0) = \nabla \cdot [d(x) \nabla \Psi(0) - \vec{b}(x) \Psi(0)] \right. \\
\left. + \lambda f\left(x, u_{\lambda}\right) \Psi(0) + \lambda f^{\prime}_u\left(x, u_{\lambda}\right) \Psi(-\tau) \right\}
\end{aligned}
\end{equation*}
and $
C^1_\mathbb{C}=C^1([-\tau,0],\mathbb{Y}_\mathbb{C})$. Moreover, $\mu\in\mathbb{C}$ is an eigenvalue of $A_\tau(\la)$, if and only if there exists $\psi(\ne0)\in
X_{\mathbb{C}}$ such that $\Delta(\la,\mu,\tau)\psi=0$,
where
\begin{equation}\label{triangle}
\begin{split}
&\Delta(\la,\mu,\tau)\psi:\\
=&\nabla \cdot[d(x) \nabla \psi-\vec{b}(x) \psi] +\la f\left(x,u_{\lambda}\right)\psi+ \lambda u_{\lambda} f^{\prime}_u\left(x,u_{\lambda}\right)\psi e^{-\mu\tau}-\mu\psi.
\end{split}
\end{equation}

We will establish that as the time delay $\tau$ increases, eigenvalues of $A_\tau(\la)$ can cross the imaginary axis. Specifically, $A_\tau(\la)$ possesses a purely imaginary eigenvalue $\mu = i\nu$  (with  $\nu$ $>$ $0$ ) for some $\tau\ge0$  if and only if :
\begin{equation}\label{eigen}
\nabla \cdot[d(x) \nabla \psi-\vec{b}(x) \psi] +\la f\left(x,u_{\lambda}\right)\psi+ \lambda u_{\lambda} f^{\prime}_u\left(x,u_{\lambda}\right)\psi  e^{-i\theta}-i\nu\psi=0
\end{equation}
is solvable for some value of $\nu>0$, $\theta\in[0,2\pi)$, and $\psi(\ne 0)\in X_{\mathbb{C}}$.

We first establish the following estimates for solutions to \eqref{eigen}. The proof of this lemma was inspired by \cite{liu2022}.

\begin{lemma}\label{nu}
If $(\psi_\lambda,\nu_\lambda,\theta_\lambda)$ solves \eqref{eigen}, where $\psi_\lambda(\neq 0)\in X_{\mathbb{C}}$, $\nu_\lambda>0$, and $\theta_\lambda\in[0,2\pi)$, then $\frac{\nu_\lambda}{|\lambda-\lambda_{*}|}$ is bounded for $\lambda\in\Lambda$. Moreover, ignoring a scalar factor, $\psi_\lambda$ can be represented as
\begin{equation}\label{2.20}
\begin{cases}
\psi_\lambda=r_\lambda\phi+w_\lambda, & r_\lambda\geq 0,\ w_\lambda\in(X_{1})_{\mathbb{C}}, \\
\|\psi_\lambda\|_{\mathbb{X}}=\|\phi\|_{\mathbb{X}},
\end{cases}
\end{equation}
where $r_\lambda$, $w_\lambda$ and $\psi_\lambda$ satisfy $\lim_{\lambda\to\lambda_{*}}r_\lambda=1$, $\lim_{\lambda\to\lambda_{*}}w_\lambda=0$, $\lim_{\lambda\to\lambda_{*}}\psi_\lambda=\phi$ in $(C^{2,\alpha}(\overline{\Omega}))_{\mathbb{C}}$, and $\alpha$ and $\phi$ are defined in $(\mathbf{H}_{1})$ and (2.5), respectively.
\end{lemma}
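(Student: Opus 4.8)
The plan is to combine a priori energy identities, elliptic regularity, and a compactness argument, reserving a projection onto $\phi^*$ for the sharp rate at the end. First I would fix the normalization. Writing $\psi_\la=r_\la\phi+w_\la$ according to the splitting $\mathbb{X}=\mathrm{span}\{\phi\}\oplus X_1$, I would use the single complex scalar multiplying a solution of \eqref{eigen} (its phase together with its modulus) to arrange simultaneously $r_\la\ge 0$ and $\|\psi_\la\|_{\mathbb{X}}=\|\phi\|_{\mathbb{X}}$, which makes $\{\psi_\la\}_{\la\in\Lambda}$ bounded in $\mathbb{X}=H^2\cap H^1_0$. I would then take the $\mathbb{Y}$-inner product of \eqref{eigen} with $\psi_\la$ and split into real and imaginary parts. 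Since $d\ge d_{\min}>0$ and the coefficients $\la f(x,u_\la)$, $\la u_\la f'_u(x,u_\la)$ are bounded uniformly in $\la$ (as $u_\la\to 0$), the real part yields $\|\nabla\psi_\la\|_{L^2}\le C\|\psi_\la\|_{L^2}$; the imaginary part, into which the advection term enters only through a quantity controlled by $\|\psi_\la\|_{L^2}\|\nabla\psi_\la\|_{L^2}$, then gives $\nu_\la\|\psi_\la\|_{L^2}^2\le C\|\psi_\la\|_{L^2}^2$, hence the uniform bound $\nu_\la\le C$. With $\nu_\la$ bounded, elliptic regularity applied to \eqref{eigen} gives $\|\psi_\la\|_{\mathbb{X}}\le C\|\psi_\la\|_{L^2}$, which together with the normalization forces $\|\psi_\la\|_{L^2}\ge c>0$ and thereby excludes any loss of compactness.

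Next I would argue by compactness and uniqueness of the limit. Along any sequence $\la_n\to\la_*$, I extract $\nu_{\la_n}\to\nu_0$, $\theta_{\la_n}\to\theta_0$, and $\psi_{\la_n}\rightharpoonup\psi_0$ weakly in $\mathbb{X}$ and strongly in $H^1$ and $C^0$ (using $n\le 3$). Since $u_\la\to 0$ makes $\la f(x,u_\la)\to\la_* m$ and $\la u_\la f'_u(x,u_\la)\to 0$, the limit solves $L_{\la_*}\psi_0=i\nu_0\psi_0$ with $\psi_0\ne 0$ (by the lower bound just obtained). By Lemma 2.1 and the Krein--Rutman theorem, $0$ is a simple eigenvalue of $L_{\la_*}$ of largest real part with eigenfunction $\phi$, so an eigenvalue of zero real part is possible only if $\nu_0=0$ and $\psi_0\in\mathrm{span}\{\phi\}$. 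Elliptic, and then Schauder, regularity upgrade the convergence to $\mathbb{X}$ and to $(C^{2,\alpha}(\cOm))_{\mathbb{C}}$, and the normalization with $r_\la\ge 0$ identifies $\psi_0=\phi$. As the limit is independent of the subsequence, the full convergences $\nu_\la\to 0$, $\psi_\la\to\phi$, $r_\la\to 1$, $w_\la\to 0$ follow (the last two because $r_\la$ and $w_\la$ depend continuously on $\psi_\la$).

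For the rate I would project \eqref{eigen} onto $\phi^*$. Since $L^*_{\la_*}\phi^*=0$ gives $\langle\phi^*,L_{\la_*}\psi_\la\rangle=0$, one obtains $i\nu_\la\langle\phi^*,\psi_\la\rangle=\langle\phi^*,g_\la\psi_\la\rangle$, where $g_\la:=\la f(x,u_\la)-\la_* m+\la u_\la f'_u(x,u_\la)e^{-i\theta_\la}$. Because $\langle\phi^*,\psi_\la\rangle\to\int_\Om\phi\phi^*\,dx>0$ is bounded below for $\la$ near $\la_*$, it suffices to show $\langle\phi^*,g_\la\psi_\la\rangle=O(|\la-\la_*|)$. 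In the nondegenerate case $\int_\Om f'_u(x,0)\phi^2\phi^*\,dx\ne 0$ of Theorem \ref{2.1} this is immediate: there $u_\la=O(|\la-\la_*|)$, so $\|g_\la\|_{L^\infty}=O(|\la-\la_*|)$ and the claim follows at once.

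The hard part will be the degenerate case of Theorem \ref{2.3}, where $u_\la=O(|\la-\la_*|^{1/2})$ and the crude estimate only delivers $\langle\phi^*,g_\la\psi_\la\rangle=O(|\la-\la_*|^{1/2})$, which is too weak. Here I expect to split $\langle\phi^*,g_\la\psi_\la\rangle=\langle\phi^*,g_\la\phi\rangle+\langle\phi^*,g_\la(\psi_\la-\phi)\rangle$ and to exploit the defining cancellation $\int_\Om f'_u(x,0)\phi^2\phi^*\,dx=0$, which annihilates the leading $O(|\la-\la_*|^{1/2})$ contribution to $\langle\phi^*,g_\la\phi\rangle$ and leaves it $O(|\la-\la_*|)$. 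For the remainder, projecting $L_{\la_*}\psi_\la=i\nu_\la\psi_\la-g_\la\psi_\la$ onto the range $Y_1$ and inverting $L_{\la_*}\colon X_1\to Y_1$ yields $\|w_\la\|_{\mathbb{X}}\le C(\nu_\la+\|g_\la\|_{L^\infty})$, hence $\|\psi_\la-\phi\|_{L^2}\le C(\nu_\la+|\la-\la_*|^{1/2})$, so that $\langle\phi^*,g_\la(\psi_\la-\phi)\rangle=O(|\la-\la_*|)+C|\la-\la_*|^{1/2}\nu_\la$. Absorbing the residual $C|\la-\la_*|^{1/2}\nu_\la$ into the left-hand side for $\la$ sufficiently close to $\la_*$ then gives $\nu_\la=O(|\la-\la_*|)$ in both cases, completing the bound on $\nu_\la/|\la-\la_*|$.
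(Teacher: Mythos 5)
Your proposal is correct and follows the same overall architecture as the paper's proof (normalize $\psi_\la$ as $r_\la\phi+w_\la$ with $\|\psi_\la\|_{\mathbb{X}}=\|\phi\|_{\mathbb{X}}$; bound $\nu_\la$ by testing the equation against $\psi_\la$; identify the limit $\psi_\la\to\phi$, $\nu_\la\to 0$ by compactness plus the fact that $0$ is the principal eigenvalue of $L_{\la_*}$; obtain the rate by testing against $\phi^*$ and using $\langle\phi^*,L_{\la_*}\psi_\la\rangle=0$), but it differs in two worthwhile respects. First, where the paper runs Steps 1 and 3 as contradiction arguments along sequences $\la_n\to\la_*$, you extract the same information directly: the real part of the energy identity gives $\|\nabla\psi_\la\|_{2}\le C\|\psi_\la\|_{2}$ via Young's inequality, after which the imaginary part yields $\nu_\la\le C$ without any subsequence extraction, and the final absorption argument gives $\nu_\la\le C|\la-\la_*|$ quantitatively. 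Second, and more substantively, your treatment of the degenerate case $\int_{\Om}f'_u(x,0)\phi^2\phi^*\,dx=0$ is more careful than the paper's: the paper's Step 3 substitutes the expansion $u_\la=(\la-\la_*)\beta_\la\phi+h$ and a limit for $\beta_\la$ whose formula has $\int_{\Om}f'_u(x,0)\phi^2\phi^*\,dx$ in the denominator, so as written it only covers the nondegenerate branch where $u_\la=O(|\la-\la_*|)$; your splitting $\langle\phi^*,g_\la\psi_\la\rangle=\langle\phi^*,g_\la\phi\rangle+\langle\phi^*,g_\la(\psi_\la-\phi)\rangle$, the cancellation of the $O(|\la-\la_*|^{1/2})$ term by the degeneracy condition itself, and the bound $\|w_\la\|_{\mathbb{X}}\le C(\nu_\la+\|g_\la\|_{\infty})$ from inverting $L_{\la_*}$ on $X_1$ genuinely address the branch $u_\la^{\pm}=O(|\la-\la_*|^{1/2})$ of Theorem \ref{2.3}, which the lemma's hypothesis $\la\in\Lambda$ includes. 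The only point you should make explicit is that $|r_\la-1|$ is also controlled by $\|w_\la\|_{\mathbb{X}}$ (via the normalization $\|r_\la\phi+w_\la\|_{\mathbb{X}}=\|\phi\|_{\mathbb{X}}$), so that $\|\psi_\la-\phi\|_{2}\le C(\nu_\la+\|g_\la\|_{\infty})$ as you use it; with that noted, the argument is complete.
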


\begin{proof}
Ignoring a scalar factor, $\psi_\lambda$ can be represented as \eqref{2.20}. The proof proceeds in three steps.

\noindent\textbf{Step 1.} We prove that $\nu_\lambda$ is bounded for $\lambda \in \Lambda$. Suppose, for contradiction, that it is unbounded. Then there exists a sequence $\{\lambda_{n}\}_{n=1}^{\infty}$ such that
\[
\lim_{n\to\infty}\lambda_{n} = \lambda_{*}, \quad \lim_{n\to\infty}\nu_{\lambda_{n}} = +\infty.
\]

Since $\theta_\lambda$ is bounded for $\lambda \in \Lambda$ and $X$ is compactly embedded into $C^{\alpha}(\overline{\Omega})$ (with $\alpha \in (0,1/2)$ from (H2)), there exists a subsequence $\{\lambda_{n_{k}}\}_{k=1}^{\infty}$ (denoted as $\{\lambda_{n}\}_{n=1}^{\infty}$ for simplicity) such that
\begin{equation} \label{2.21}
\lim_{n\to\infty}\theta_{\lambda_{n}} = \theta_{*} \in [0,2\pi], \quad \text{and} \quad \lim_{n\to\infty}\psi_{\lambda_{n}} = \psi_{*} \text{ in } (C^{\alpha}(\overline{\Omega}))_{\mathbb{C}}.
\end{equation}

Substituting $(\psi, \nu, \theta, \lambda) = (\psi_{\lambda_{n}}, \nu_{\lambda_{n}}, \theta_{\lambda_{n}}, \lambda_{n})$ into \eqref{eigen}, multiplying by $\overline{\psi_{\lambda_{n}}}$, and integrating over $\Omega$ yields:
\begin{align}
&\langle \psi_{\lambda_{n}}, L_{\lambda_{*}} \psi_{\lambda_{n}} \rangle - \int_{\Omega} \left[ \lambda_{*} m(x) |\psi_{\lambda_{n}}|^{2} - \lambda_{n} f(x, u_{\lambda_{n}}) u_{\lambda_{n}} |\psi_{\lambda_{n}}|^{2} \right] dx \nonumber \\
&\quad + \lambda_{n} e^{-i\theta_{\lambda_{n}}} \int_{\Omega} f^{\prime}(x, u_{\lambda_{n}}) u_{\lambda_{n}} |\psi_{\lambda_{n}}|^{2} dx - i \nu_{\lambda_{n}} \int_{\Omega} |\psi_{\lambda_{n}}|^{2} dx = 0,
\end{align}
where $L_{\lambda_{*}}$ is defined in (2.3). Using the expression
\[
\langle \psi_{\lambda_{n}}, L_{\lambda_{*}} \psi_{\lambda_{n}} \rangle = -\int_{\Omega} d(x) |\nabla \psi_{\lambda_{n}}|^{2} dx + \int_{\Omega} \psi_{\lambda_{n}} \vec{b}(x) \cdot \nabla \overline{\psi_{\lambda_{n}}} dx + \lambda_{*} \int_{\Omega} m(x) |\psi_{\lambda_{n}}|^{2} dx,
\]

we derive the estimates:
\begin{align} \label{2.23}
\nu_{\lambda_n} \| \psi_{\lambda_n} \|_2 &\leq \max_{x \in \overline{\Omega}} |\vec{b}(x)| \| |\nabla \psi_{\lambda_n}| \|_2 + \lambda_n \max_{x \in \overline{\Omega}} |f^{\prime}(x, u_{\lambda_{n}})| \| u_{\lambda_n} \|_{\infty} \| \psi_{\lambda_n} \|_2, \\
\int_{\Omega} d(x) |\nabla \psi_{\lambda_n}|^2 dx &\leq \max_{x \in \overline{\Omega}} |\vec{b}(x)| \| |\nabla \psi_{\lambda_n}| \|_2 \| \psi_{\lambda_n} \|_2 + \lambda_n \int_{\Omega} |f(x, u_{\lambda_{n}})| |\psi_{\lambda_n}|^2 dx \nonumber \\
&\quad + \lambda_n \max_{x \in \overline{\Omega}} |f^{\prime}(x, u_{\lambda_{n}})| \| u_{\lambda_n} \|_{\infty} \| \psi_{\lambda_n} \|_2^2.
\end{align}

Since $\| \psi_{\lambda_n} \|_{X}$$ =$ $\| \phi \|_{X}$, the sequences $\{ \| |\nabla \psi_{\lambda_n}| \|_{2} \}$$_{n=1}^{\infty}$ and $\{ \| \psi_{\lambda_n}$$ \|_{2} \}$$_{n=1}^{\infty}$ are bounded. As $\lim_{n\to\infty}$$ \nu_{\lambda_n}$$ = +\infty$, the first inequality in \eqref{2.23} implies $\lim_{n\to\infty}$$ \| \psi_{\lambda_n}$$ \|_{2} = 0$. Combined with \eqref{2.21}, this gives $\psi_{*}$$ = 0$ and $\lim_{n\to\infty}$$ \psi$$_{\lambda_n} = 0$ in $(C^{\alpha}(\overline{\Omega}))_{\mathbb{C}}$. From the second inequality in \eqref{2.23}, we obtain:
\[
\lim_{n\to\infty} \int_{\Omega} |\nabla \psi_{\lambda_n}|^2 dx = 0.
\]

Thus, $\lim_{n\to\infty} \nu_{\lambda_n} \psi_{\lambda_n} = 0$ in $Y_{\mathbb{C}}$. Rewriting \eqref{eigen} as:
\begin{equation*}
(L_{\lambda_{*}} - I) \psi_{\lambda_n} + M_{\lambda_n} \psi_{\lambda_n} = 0,
\end{equation*}
where
\[
M_{\lambda_n} \psi_{\lambda_n} = \psi_{\lambda_n} - \lambda_* m(x) \psi_{\lambda_n} + \lambda_n f(x, u_{\lambda_n}) \psi_{\lambda_n} + \lambda_n f^{\prime}(x, u_{\lambda_n}) u_{\lambda_n} \psi_{\lambda_n} e^{-i\theta_{\lambda_n}} - i \nu_{\lambda_n} \psi_{\lambda_n},
\]
and noting $\lim_{n\to\infty} M_{\lambda_n} \psi_{\lambda_n} = 0$ in $Y_{\mathbb{C}}$, the continuity of $(L_{\lambda_{*}} - I)^{-1}$ from $Y_{\mathbb{C}}$ to $X_{\mathbb{C}}$ implies:
\[
\lim_{n\to\infty} \psi_{\lambda_n} = -\lim_{n\to\infty} (L_{\lambda_{*}} - I)^{-1} M_{\lambda_n} \psi_{\lambda_n} = 0 \text{ in } X_{\mathbb{C}}.
\]

This contradicts $\| \psi_{\lambda_n} \|_{X} = \| \phi \|_{X}$, proving boundedness of $\nu_\lambda$.

\noindent\textbf{Step 2.} We show $\lim_{\lambda \to \lambda_*} \nu_\lambda = 0$ and $\lim_{\lambda \to \lambda_*} \psi_\lambda = \phi$ in $(C^{2,\alpha}(\overline{\Omega}))_{\mathbb{C}}$. Consequently,
\[
\lim_{\lambda \to \lambda_*} r_\lambda = 1, \quad \text{and} \quad \lim_{\lambda \to \lambda_*} w_\lambda = 0 \text{ in } (C^{2,\alpha}(\overline{\Omega}))_{\mathbb{C}}.
\]

For any sequence $\{\lambda_k\}_{k=1}^{\infty}$ with $\lim_{k \to \infty} \lambda_k = \lambda_*$, there exists a subsequence $\{\lambda_{k_n}\}_{n=1}^{\infty}$ (denoted as $\{\lambda_n\}_{n=1}^{\infty}$) such that:
\[
\lim_{n \to \infty} \nu_{\lambda_n} = a \geq 0, \quad \lim_{n \to \infty} \theta_{\lambda_n} = \widetilde{\theta} \in [0,2\pi], \quad \text{and} \quad \lim_{n \to \infty} \psi_{\lambda_n} = \widetilde{\psi} \text{ in } (C^{\alpha}(\overline{\Omega}))_{\mathbb{C}}.
\]

Since $\lim_{n \to \infty} u_{\lambda_n} = 0$ in $C^{\alpha}(\overline{\Omega})$, we have:
\[
\lim_{n \to \infty} M_{\lambda_n} \psi_{\lambda_n} = \widetilde{\psi} - i a \widetilde{\psi} \text{ in } (C^{\alpha}(\overline{\Omega}))_{\mathbb{C}}.
\]

By the continuity of $(L_{\lambda_{*}} - I)^{-1}$ from $(C^{\alpha}(\overline{\Omega}))_{\mathbb{C}}$ to $(C^{2,\alpha}(\overline{\Omega}))_{\mathbb{C}}$,
\[
\lim_{n \to \infty} \psi_{\lambda_n} = -\lim_{n \to \infty} (L_{\lambda_{*}} - I)^{-1} M_{\lambda_n} \psi_{\lambda_n} = -(L_{\lambda_{*}} - I)^{-1} (\widetilde{\psi} - i a \widetilde{\psi}) \text{ in } (C^{2,\alpha}(\overline{\Omega}))_{\mathbb{C}}.
\]

Thus, $\lim_{n \to \infty} \psi_{\lambda_n} = \widetilde{\psi}$ in $(C^{2,\alpha}(\overline{\Omega}))_{\mathbb{C}}$, and $L_{\lambda_{*}} \widetilde{\psi} - i a \widetilde{\psi} = 0$. Since $\| \psi_{\lambda_n} \|_{X} = \| \phi \|_{X}$, $\widetilde{\psi} \neq 0$, so $i a$ is an eigenvalue of $L_{\lambda_{*}}$. As $0$ is the principal eigenvalue, $a = 0$ and $\widetilde{\psi} = c \phi$ with $|c| = 1$. From (3.5), $c = 1$, proving the claim.

\noindent\textbf{Step 3.} We prove that $\frac{\nu_\lambda}{|\lambda - \lambda_{*}|}$ is bounded for $\lambda \in \Lambda$. Assume, for contradiction, that there exists a sequence $\{\lambda_l\}_{l=1}^{\infty}$ such that:
\[
\lim_{l \to \infty} \lambda_l = \lambda_*, \quad \lim_{l \to \infty} \frac{\nu_{\lambda_l}}{|\lambda_l - \lambda_{*}|} = +\infty, \quad \lim_{l \to \infty} \theta_{\lambda_l} = \theta_0 \in [0,2\pi].
\]

Note that $\langle \phi^{*}, L_{\lambda_{*}} \psi_{\lambda_l} \rangle = \langle L^{*}_{\lambda_{*}} \phi^{*}, \psi_{\lambda_l} \rangle = 0$. Substituting $(\psi, \nu, \theta, \lambda) = (\psi_{\lambda_l}, \nu_{\lambda_l}, \theta_{\lambda_l},\\$$ \lambda_l)$ and $u_{\lambda_l} = (\lambda_l - \lambda_{*}) \beta_{\lambda_l} \phi + h(t_{\lambda_l} \phi(x), \lambda_l)$ into \eqref{eigen}, multiplying by $\phi^{*}$, and integrating over $\Omega$ gives:
\begin{align}
\frac{i \nu_{\lambda_l}}{\lambda_l - \lambda_{*}} \int_{\Omega} \psi_{\lambda_l} \phi^{*} dx &= \int_{\Omega} m(x) \psi_{\lambda_l} \phi^{*} dx + \lambda_l \int_{\Omega} m_1(\xi, \beta, \lambda) \psi_{\lambda_l} \phi^{*} dx \nonumber \\
&\quad + \lambda_l \int_{\Omega} f^{\prime}(x, u_{\lambda_l}) \left( \beta_{\lambda_l} \phi + \frac{h(t_{\lambda_l} \phi(x), \lambda_l)}{\lambda_l - \lambda_{*}} \right) \psi_{\lambda_l} \phi^{*} dx \, e^{-i \theta_{\lambda_l}},
\end{align}
where
\[
m_1(\xi, \beta, \lambda) = \begin{cases}
\frac{f(t_\lambda \phi(x) + h(t_\lambda \phi(x), \lambda)) - m(x)}{\lambda - \lambda_{*}}, & \text{if } \lambda \neq \lambda_{*}, \\
f^{\prime}_{u}(x, 0) \beta_{\lambda_{*}} \phi, & \text{if } \lambda = \lambda_{*}.
\end{cases}
\]

From Step 2 and Theorem 2.3, $\lim_{\lambda \to \lambda_{*}} \psi_\lambda = \phi$ in $(C^{2,\alpha}(\overline{\Omega}))_{\mathbb{C}}$,
\[
\lim_{\lambda \to \lambda_{*}} \beta_{\lambda} = \frac{\int_{\Omega} m(x) \phi \phi^{*} dx}{-\lambda_{*} \int_{\Omega} f^{\prime}_{u}(x, 0) \phi^2 \phi^{*} dx}, \quad \text{and} \quad \lim_{\lambda \to \lambda_{*}} \frac{h(t_{\lambda} \phi(x), \lambda)}{\lambda - \lambda_{*}} = 0 \text{ in } (C^{\alpha}(\overline{\Omega}))_{\mathbb{C}}.
\]

Taking the limit as $l \to \infty$ in the equation yields:
\[
\limsup_{l \to \infty} \frac{\nu_{\lambda_l}}{|\lambda_l - \lambda_{*}|} \leq \frac{3 \int_{\Omega} m(x) \phi \phi^{*} dx}{\int_{\Omega} \phi \phi^{*} dx},
\]
a contradiction. 
\end{proof}

Ignoring a scalar factor, we see that $\psi$ in \eqref{eigen} can be represented as follows:
\begin{equation}\label{2.30}
\begin{cases}
\psi = r \phi + w, & r \geq 0,\quad w \in (X_{1})_{\mathbb{C}}, \\
\|\psi\|_{X} = \|\phi\|_{X}.
\end{cases} 
\end{equation}

\begin{lemma}
If $0 \in \sigma(\mathcal A_\tau(\la))$ for all $(\tau,\lambda) \in \mathbb{R}_{+} \times \Lambda$, then $\ds\int_{\Om}f^{\prime}_{u}(x, 0)\phi^2\phi^{*}dx = 0$.
\end{lemma}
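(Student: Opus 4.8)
The plan is to establish the contrapositive: assuming $\ds\int_\Om f'_u(x,0)\phi^2\phi^*\,dx\neq0$, I will derive a contradiction with the hypothesis that $0$ is an eigenvalue of $A_\tau(\la)$ for every $(\tau,\la)\in\R_+\times\Lambda$. The first observation is that setting $\mu=0$ in \eqref{triangle} removes the delay, since $e^{-\mu\tau}=1$; hence $0\in\sigma(A_\tau(\la))$ is a $\tau$-independent condition, equivalent to the solvability of the steady-state linearisation
\[
\nabla\cdot[d(x)\nabla\psi-\vec b(x)\psi]+\la f(x,u_\la)\psi+\la u_\la f'_u(x,u_\la)\psi=0 \qquad (\star)
\]
by some $\psi(\neq0)\in X_\C$. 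Because $\ds\int_\Om f'_u(x,0)\phi^2\phi^*\,dx\neq0$, Theorem \ref{2.1} applies: $\Lambda$ is a full punctured neighbourhood of $\la_*$, the steady state is $u_\la=t_\la\phi+h(t_\la\phi,\la)$ with $t_\la=(\la-\la_*)\beta_\la+o(\la-\la_*)$, and $\beta_{\la_*}$ is given by \eqref{al}. The hypothesis then furnishes, for each $\la\in\Lambda$, a nonzero solution $\psi_\la$ of $(\star)$, which (the coefficients being real) I take real and normalise as in \eqref{2.30}. Repeating Step~2 of Lemma \ref{nu} with $\nu_\la\equiv0$ — rewriting $(\star)$ as $(L_{\la_*}-I)\psi_\la+\widetilde M_\la\psi_\la=0$, using $u_\la\to0$ and the continuity of $(L_{\la_*}-I)^{-1}$ from $(C^{\alpha}(\cOm))_\C$ to $(C^{2,\alpha}(\cOm))_\C$ to upgrade subsequential limits, and invoking the simplicity of the principal eigenvalue $0$ of $L_{\la_*}$ together with the normalisation — yields $\lim_{\la\to\la_*}\psi_\la=\phi$ in $(C^{2,\alpha}(\cOm))_\C$.

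The decisive step is to pair $(\star)$ with $\phi^*$. Writing $(\star)$ as $L_{\la_*}\psi_\la+[\la f(x,u_\la)-\la_*m(x)]\psi_\la+\la u_\la f'_u(x,u_\la)\psi_\la=0$ and using the adjoint relation $\ds\int_\Om\phi^*L_{\la_*}\psi_\la\,dx=\int_\Om(L^*_{\la_*}\phi^*)\psi_\la\,dx=0$ (valid since $L^*_{\la_*}\phi^*=0$), I obtain
\[
\int_\Om[\la f(x,u_\la)-\la_*m(x)]\psi_\la\phi^*\,dx+\la\int_\Om u_\la f'_u(x,u_\la)\psi_\la\phi^*\,dx=0. \qquad (\star\star)
\]

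Next I divide $(\star\star)$ by $\la-\la_*$ and let $\la\to\la_*$. Splitting $\la f(x,u_\la)-\la_*m(x)=\la[f(x,u_\la)-f(x,0)]+(\la-\la_*)m(x)$ (using $f(x,0)=m(x)$), and invoking $u_\la/(\la-\la_*)\to\beta_{\la_*}\phi$, $f'_u(x,u_\la)\to f'_u(x,0)$ and $\psi_\la\to\phi$ in the topology of the previous step, the left-hand side tends to
\[
2\la_*\beta_{\la_*}\int_\Om f'_u(x,0)\phi^2\phi^*\,dx+\int_\Om m(x)\phi\phi^*\,dx.
\]
Substituting $\beta_{\la_*}$ from \eqref{al}, the first summand equals $-2\int_\Om m\phi\phi^*\,dx$, so the limit collapses to $-\int_\Om m(x)\phi\phi^*\,dx$. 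As the left-hand side of $(\star\star)$ vanishes identically, this forces $\int_\Om m(x)\phi\phi^*\,dx=0$, contradicting Lemma \ref{lemma2.2}. Therefore the standing assumption is untenable and $\ds\int_\Om f'_u(x,0)\phi^2\phi^*\,dx=0$.

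The main obstacle I anticipate is the rigorous limit passage: securing convergence $\psi_\la\to\phi$ in a norm strong enough (here $C^{2,\alpha}$, obtained from elliptic regularity and the compact embedding of $X$) to interchange limit and integral, and evaluating the difference quotient $[\la f(x,u_\la)-\la_*m(x)]/(\la-\la_*)$ by means of the sharp expansion $u_\la\sim(\la-\la_*)\beta_{\la_*}\phi$ from Theorem \ref{2.1}. The algebraic cancellation that reduces the limit to precisely $-\int_\Om m\phi\phi^*\,dx$ is the crux, since it converts the degeneracy hypothesis into a direct contradiction with the positivity in Lemma \ref{lemma2.2}.
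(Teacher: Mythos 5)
Your proof is correct and follows essentially the same route as the paper's: substitute $\mu=0$ into the characteristic equation (noting the condition is then $\tau$-independent), pair with $\phi^{*}$ to annihilate the $L_{\lambda_{*}}$ term, divide by $\lambda-\lambda_{*}$, and pass to the limit to obtain $2\lambda_{*}\beta_{\lambda_{*}}\int_{\Omega}f^{\prime}_{u}(x,0)\phi^{2}\phi^{*}\,dx+\int_{\Omega}m(x)\phi\phi^{*}\,dx=0$, which after substituting $\beta_{\lambda_{*}}$ contradicts Lemma \ref{lemma2.2}. Your write-up is in fact more explicit than the paper's terse version, supplying the convergence $\psi_{\lambda}\to\phi$ and the algebraic cancellation in detail.
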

\begin{proof}
    Substituting $\psi_{\lambda_{n}^{\prime}}=r_{\lambda_{n}^{\prime}}\phi+w_{\lambda_{n}^{\prime}}$,$u_{\lambda_{n}^{\prime}}=t_{\lambda_{n}^{\prime}} \phi+h\left(t_{\lambda_{n}^{\prime}} \phi, \lambda_{n}^{\prime}\right)$ and $\mu =0$ into the equation of \eqref{triangle},where ${\lambda_{n}^{\prime}}$ satisfies $\lim_{n \to \infty} \lambda_{n}^{\prime} = \lambda_{*}$ , multiplying it by $\phi^{*}(x)$ and then integrating on $\Omega$, we have
\[
\frac{1}{\lambda-\lambda_{*}} \left\{
\begin{aligned}
&\int_{\Omega} \left[ (\lambda-\lambda_{*})\phi^{*}(x)\left[m(x)+\lambda m_{1}(\xi,\beta,\lambda)\right](r_{\lambda_{n}^{\prime}}\phi(x)+w_{\lambda_{n}^{\prime}}) \right. \\
&\left. \quad + \lambda t_{\lambda}\phi(x)f^{\prime}(x,u_{\lambda})(r_{\lambda_{n}^{\prime}}\phi(x)+w_{\lambda_{n}^{\prime}})\phi^{*}(x) \right] \mathrm{d}x + o(\lambda-\lambda_{*})
\end{aligned}
\right\} = 0
\]

Since $\lim_{n \to \infty} \left\|w_{\lambda_{n}^{\prime}}\right\|_{\mathbb{Y}_{\mathbb{C}}} = 0$, then $\lim_{n \to \infty} \left\|w_{\lambda_{n}^{\prime}}\right\|_{L_{1}} = 0$.
Thus,
$\ds\int_{\Om}f^{\prime}_{u}(x, 0)\phi^2\phi^{*}dx = 0$.
\end{proof}

Next, we intend to use the implicit function theorem to prove that \eqref{eigen} is solvable.

Substituting $u_\lambda(x) = t_\lambda \phi(x) + h(t_\lambda \phi(x), \lambda),$ and $\nu = |\lambda-\lambda_{*}|h$ into \eqref{eigen}, we see that $(\nu,\theta,\psi)$ solves \eqref{eigen}, where $\nu>0$, $\theta\in[0,2\pi)$ and $\psi\in X_{\mathbb{C}}\backslash\{0\}$ satisfying \eqref{2.30}, then
\begin{equation}\label{2.31}
\begin{aligned}
\relax [L_{\lambda_{*}} +(\lambda -\lambda _{*})m(x)+\lambda (f\left(x,u_{\lambda}\right)-m(x))-i|\lambda-\lambda_{*}|h](r \phi + w)+ \\\lambda (t_\lambda \phi(x) + h(t_\lambda \phi(x), \lambda)) f^{\prime}_u\left(x,u_{\lambda}\right)(r \phi + w) e^{-i\theta}=0    
\end{aligned}
\end{equation}

Multiplying \eqref{2.31} by $\phi^{*}$ and integrating on $\Omega$, we have
\begin{equation*}
\begin{aligned}
\int_{\Omega} \phi^{*}(x) [L_{\lambda_{*}} +(\lambda -\lambda _{*})m(x)+\lambda (f\left(x,u_{\lambda}\right)-m(x))-i|\lambda-\lambda_{*}|h](r \phi(x) + w)+\\ \lambda (t_\lambda \phi(x) + h(t_\lambda \phi(x), \lambda)) f^{\prime}_u\left(x,u_{\lambda}\right)(r \phi + w) e^{-i\theta}\phi^{*}(x) \mathrm{d} x  =0      
\end{aligned}
\end{equation*}

Hence, if $\lambda \in \Lambda_{2_\epsilon}$,As $\lambda\to\lambda_{*}$, we have
\begin{equation*}\begin{aligned}
\int_{\Omega} \phi^{*}(x) [ m(x)+\lambda f^{\prime}_{u}(x, 0) \beta_\lambda  \phi-ih] \phi(x) +\lambda \beta_\lambda   f^{\prime}\left(x,0\right) \phi^{2}(x)  e^{-i\theta}\phi^{*}(x) \mathrm{d} x  =0      
\end{aligned}       
\end{equation*}

Separating the real and imaginary parts of the above formula, we get
$$
\left\{
\begin{array}{l}
\displaystyle
-\lambda_{*} \beta_{\lambda_{*}} \int_{\Omega} f^{\prime}\left(x,0\right) \phi^{2} \phi^{*} dx \sin \theta = h \int_{\Omega} \phi \phi^{*} dx \\
\displaystyle
\lambda_{*} \beta_{\lambda_{*}} \int_{\Omega} f^{\prime}\left(x,0\right) \phi^{2} \phi^{*} dx \cos \theta = 0
\end{array}
\right.
$$
combined with \eqref{al} implies that
\begin{equation}\label{2.32}
\theta  = \frac{\pi}{2}:=\theta_2, \quad h = h_{\lambda_{*}} = \frac{\int_{\Omega} m(x) \phi \phi^{*}  dx}{\int_{\Omega} \phi \phi^{*}  dx}  
\end{equation}

if $\lambda \in \Lambda_{1_\epsilon}$, we get:
\begin{equation}
\theta = \frac{3\pi}{2}:=\theta_1 , \quad h = h_{\lambda_{*}} = \frac{\int_{\Omega} m(x) \phi \phi^{*}  dx}{\int_{\Omega} \phi \phi^{*}  dx}  
\end{equation}

Without losing generality, we only consider that $\lambda \in \Lambda_{2_\epsilon}$, and the proof of the other part of the result is similar:
\begin{lemma}\label{2.7}
There exists a unique continuously differentiable mapping 
$\lambda\mapsto (w_{\lambda},r_{\lambda},h_{\lambda}$
$,\theta_{\lambda})$ 
from $\lambda\in\Lambda_{2\varepsilon}$ to $(X_{1})_{\mathbb{C}}\times\mathbb{R}^{3}$ 
such that,
$G(w_\la,r_\la,r_\la,\theta_\la,\la)=0$.Moreover, if $\lambda\in\Lambda_{2\varepsilon}$, and $(z^{\lambda}, \beta^{\lambda}, h^{\lambda}, \theta^{\lambda}, \lambda)$ solves the equation   
\begin{equation}\label{3.6G} 
\begin{cases}
G(w,r,h,\theta,\la)=0\\
z\in (X_1)_{\mathbb{C}},\;h,\;r\ge0, \;\theta\in[0,2\pi)\\
\end{cases}
\end{equation}
with $h^{\lambda} > 0$ and $\theta^{\lambda} \in [0, 2\pi)$, then 
\[
(z^{\lambda}, \beta^{\lambda}, h^{\lambda}, \theta^{\lambda}) = (z_{\lambda}, \beta_{\lambda}, h_{\lambda}, \theta_{\lambda}).
\]

Here
 \begin{equation*}
 G(w, r, h, \theta, \lambda) : (X_1)_{\mathbb{C}} \times [0, +\infty) \times [0, +\infty) \times [0, 2\pi] \times \mathbb{R} \to (Y_1)_{\mathbb{C}} \times \mathbb{C} \times \mathbb{R}     
 \end{equation*}
is defined by
\begin{equation*}
G(w, r, h, \theta, \lambda) = (g_1, g_2, g_3)^T,    
\end{equation*}
where
\begin{equation*}
\begin{cases}
g_1(w, r, h, \theta, \lambda) &:= L_{\lambda_{*}}w + (\lambda -\lambda _{*})[ m(x)+\lambda m_{1}(\xi, \beta, \lambda)-ih](r \phi + w) \\&+\lambda (t_\lambda \phi(x) + h(t_\lambda \phi(x), \lambda)) f^{\prime}_u\left(x,u_{\lambda}\right)(r \phi + w) e^{-i\theta}  \\& - (\lambda - \lambda_*)g_2(z, r, h, \theta, \lambda), \\
g_2(w, r, h, \theta, \lambda) &:= \int_{\Omega} \phi^{*}(x) [m(x)+\lambda m_{1}(\xi, \beta, \lambda)-ih](r \phi(x) + w)\\&+ \lambda \beta_\lambda \phi(x)  f^{\prime}_u\left(x,u_{\lambda}\right)(r \phi + w) e^{-i\theta}\phi^{*}(x) \mathrm{d} x\\    
g_3(w, r) &:= \|r\phi + w\|_X^2 - \|\phi\|_X^2.
\end{cases}
\end{equation*}

 \begin{proof}
 Obviously, $G(w_{\lambda_*}, r_{\lambda_*}, h_{\lambda_*}, \theta_{\lambda_*}, \lambda_{*})=G(0, 1, h_{\la_*},\pi/2, \lambda_{*})=0$,
Denote by $T = (T_1, T_2, T_3)^T$ the Fréchet derivative of $G$ with respect to $(w, r, h, \theta)$ at $(0, 1, h_{\la_*},\pi/2, \lambda_{*})$. A direct computation implies that $T : (X_1)_{\mathbb{C}} \times \mathbb{R}^3 \to (Y_1)_{\mathbb{C}} \times \mathbb{C} \times \mathbb{R}$ is given by
\begin{equation*}
\begin{cases}
T_1(\chi, \kappa, \zeta, \vartheta) &= L_{\lambda_{*}}\chi, \\
T_2(\chi, \kappa, \zeta, \vartheta) &= \displaystyle\int_{\Omega} [m(x)\phi^* + \lambda_*\beta_{\lambda_*}f^{\prime}\left(x,0\right)\phi\phi^* - \mathrm{i}\lambda_*\beta_{\lambda_*}f^{\prime}\left(x,0\right)\phi\phi^* \\&- \mathrm{i}h_{\lambda_*}\phi^*](\kappa\phi + \chi)  dx 
 - \mathrm{i}\zeta \int_{\Omega} \phi\phi^*  dx \\&- \lambda_*\beta_{\lambda_*}\vartheta \int_{\Omega} f^{\prime}\left(x,0\right)\phi^2\phi^*  dx, \\
T_3(\chi, \kappa, \zeta, \vartheta) &= \langle \phi, \chi \rangle_X + \langle \chi, \phi \rangle_X + 2\kappa\|\phi\|_X^2.
\end{cases}
\end{equation*}

Since $T$ is a bijection from $(X_1)_{\mathbb{C}} \times \mathbb{R}^3$ to $(Y_1)_{\mathbb{C}} \times \mathbb{C} \times \mathbb{R}$, the implicit function theorem guarantees the existence of a continuously differentiable map $\lambda \mapsto (w_\lambda, r_\lambda, h_\lambda, \theta_\lambda)$ from $\Lambda_{2\epsilon}$ to $(X_1)_{\mathbb{C}} \times \mathbb{R}^{3}$ such that $(w_\lambda, r_\lambda, h_\lambda, \theta_\lambda)$ solves \eqref{3.6G}.

To establish uniqueness, suppose $(w^\lambda, r^\lambda, h^\lambda, \theta^\lambda)$ is another solution of (3.12). We show that  
\[
(w^\lambda, r^\lambda, h^\lambda, \theta^\lambda) \to (w_{\lambda_*}, r_{\lambda_*}, h_{\lambda_*}, \theta_{\lambda_*})
\]  
as $\lambda \to \lambda_*$ in $X_{\mathbb{C}} \times \mathbb{R}^{3}$. By Lemma \ref{nu},  
\[
\lim_{\lambda \to \lambda_*} r^\lambda = r_{\lambda_*} = 1 \quad \text{and} \quad \lim_{\lambda \to \lambda_*} w^\lambda = w_{\lambda_*} = 0 \quad \text{in } (C^{2,\alpha}(\overline{\Omega}))_{\mathbb{C}}.
\] 

The sequences $\{h^\lambda\}$ and $\{\theta^\lambda\}$ are bounded for $\lambda \in \Lambda_{2\epsilon}$. For any sequence $\{\lambda_n\}$ with $\lambda_n \to \lambda_*$, there exists a subsequence $\{\lambda_{n_{k}}\}_{k=1}^{\infty}$ (denoted as $\{\lambda_{n}\}_{n=1}^{\infty}$ for simplicity) such that  
\[
\lim_{n \to \infty} h^{\lambda_n} = h^{\lambda_*} \quad \text{and} \quad \lim_{n \to \infty} \theta^{\lambda_n} = \theta^{\lambda_*}.
\] 

Taking the limit in $G(w^{\lambda_n}, r^{\lambda_n}, h^{\lambda_n}, \theta^{\lambda_n}) = 0$ yields $G(w_{\lambda_*}, r_{\lambda_*}, h^{\lambda_*}, \theta^{\lambda_*}, \lambda_*) = 0$. Combined with \eqref{2.32}, this implies  
\[
(h^{\lambda_*}, \theta^{\lambda_*}) = (h_{\lambda_*}, \theta_{\lambda_*}),
\]  

This completes the proof. 
\end{proof}

\end{lemma}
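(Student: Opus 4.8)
The plan is to apply the implicit function theorem to $G$ at the point $(0,1,h_{\lambda_*},\pi/2,\lambda_*)$. First I would verify that this is a zero of $G$. For $g_1$ and $g_3$ this is immediate: at $\lambda=\lambda_*$ one has $t_{\lambda_*}=0$ and $h(0,\lambda_*)=0$, so every term carrying a factor $(\lambda-\lambda_*)$ or $t_\lambda$ drops out, leaving $L_{\lambda_*}\cdot 0=0$ and $\|\phi\|_X^2-\|\phi\|_X^2=0$; the vanishing of $g_2$ is exactly the content of the real/imaginary splitting \eqref{2.32} that produced $\theta=\pi/2$ and $h=h_{\lambda_*}$.

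The heart of the argument is to show that the Fréchet derivative $T=(T_1,T_2,T_3)^T$ displayed above is a bounded bijection from $(X_1)_{\mathbb{C}}\times\mathbb{R}^3$ onto $(Y_1)_{\mathbb{C}}\times\mathbb{C}\times\mathbb{R}$, and I would exploit its triangular structure. Given a target $(\eta,c,s)$, the equation $T_1=L_{\lambda_*}\chi=\eta$ determines $\chi\in(X_1)_{\mathbb{C}}$ uniquely, since the restriction $L_{\lambda_*}\colon(X_1)_{\mathbb{C}}\to(Y_1)_{\mathbb{C}}$ is an isomorphism (recall $\mathscr{N}(L_{\lambda_*})=\mathrm{span}\{\phi\}$, $X_1$ is a complement, and $Y_1=\mathscr{R}(L_{\lambda_*})$). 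With $\chi$ fixed, $T_3=s$ then fixes the real scalar $\kappa$, because the coefficient $2\|\phi\|_X^2$ of $\kappa$ is nonzero. It remains to solve the single complex equation $T_2=c$ for the pair of real unknowns $(\zeta,\vartheta)$.

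The main obstacle is precisely this last solvability, which is where the positivity hypotheses enter. In $T_2$ the coefficient of $\zeta$ is $-\mathrm{i}\int_\Omega\phi\phi^*\,dx$, purely imaginary and nonzero since $\phi,\phi^*>0$; and using the formula \eqref{al} for $\beta_{\lambda_*}$ one computes that the coefficient of $\vartheta$ collapses to $\int_\Omega m(x)\phi\phi^*\,dx$, which is real and strictly positive by Lemma~\ref{lemma2.2}. Hence the $\mathbb{R}$-linear map $(\zeta,\vartheta)\mapsto T_2$ has nonzero Jacobian determinant $\big(\int_\Omega m\phi\phi^*\,dx\big)\big(\int_\Omega \phi\phi^*\,dx\big)$ and sends $\mathbb{R}^2$ bijectively onto $\mathbb{C}$. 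Thus $T$ is an isomorphism, and the implicit function theorem yields the continuously differentiable branch $\lambda\mapsto(w_\lambda,r_\lambda,h_\lambda,\theta_\lambda)$ solving \eqref{3.6G} on a neighbourhood of $\lambda_*$.

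For the uniqueness claim I would show that any solution $(w^\lambda,r^\lambda,h^\lambda,\theta^\lambda)$ of \eqref{3.6G} with $h^\lambda>0$, $\theta^\lambda\in[0,2\pi)$ must converge to the base point as $\lambda\to\lambda_*$, so that local uniqueness in the implicit function theorem forces it to coincide with the constructed branch for $\lambda$ near $\lambda_*$. The convergences $r^\lambda\to1$ and $w^\lambda\to0$ follow from Lemma~\ref{nu}, since a solution of $G=0$ corresponds to a solution of \eqref{eigen} under the substitution $\nu=|\lambda-\lambda_*|h$. For the bounded quantities $(h^\lambda,\theta^\lambda)$ I would take an arbitrary sequence $\lambda_n\to\lambda_*$, extract a convergent subsequence, pass to the limit in $G=0$, and invoke \eqref{2.32} to identify the subsequential limit as $(h_{\lambda_*},\pi/2)$; since every subsequential limit is the same, the full limit exists and equals the base point, completing the plan.
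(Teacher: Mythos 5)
Your proposal is correct and follows essentially the same route as the paper: verify the base point is a zero of $G$, show the Fr\'echet derivative $T$ is an isomorphism, apply the implicit function theorem, and obtain uniqueness by forcing any other solution branch to converge to the base point via Lemma~\ref{nu} and a subsequence argument using \eqref{2.32}. The only difference is that you spell out the triangular solvability of $T$ (the isomorphism $L_{\lambda_*}\colon(X_1)_{\mathbb{C}}\to(Y_1)_{\mathbb{C}}$, the nonvanishing of $2\|\phi\|_X^2$, and the reduction of the $\vartheta$-coefficient to $\int_\Omega m\phi\phi^*\,dx>0$ via \eqref{al} and Lemma~\ref{lemma2.2}), a step the paper asserts without detail.
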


Noticing $(\nu, \theta, \psi)$ solves \eqref{eigen}, where $\nu > 0$, $\theta \in [0, 2\pi)$ and $\psi \in X_{\mathbb{C}}$ satisfying \eqref{2.30}, if and only if \eqref{3.6G}
admits a solution $(w, r, h, \theta)$. From lemma \ref{2.7}, we derive the following result.
\begin{theorem}\label{c25}
Assume that $\ds\int_{\Om}f^{\prime}_{u}(x, 0)\phi^2\phi^{*}dx \neq 0$, then there exists $\epsilon > 0$ such that

\begin{enumerate}
    \item[(i)]   for each $\lambda \in \Lambda_{2\epsilon}$ system \eqref{delay} has exactly one spatially nonhomogeneous steady-state solution, whose associated infinitesimal generator $\mathcal{A}_{\tau,\lambda}$ has an imaginary eigenvalue $\mathrm{i}\nu$ if and only if
\begin{equation*}\label{par}
\nu=\nu_\la=(\la-\la_*)h_\la,\;\psi= c \psi_\la,\;
\tau=\tau_{n}=\frac{\theta_\la+2n\pi}{\nu_\la},\;\; n=0,1,2,\cdots,
\end{equation*}
where $\psi_\la=r_\la\phi+w_\la$,
$c$ is a nonzero constant, and
$w_\la,r_\la,h_\la,\theta_\la$ are defined as in lemma \ref{2.7}, and $\lim_{\la \to \la_*} \theta_\la =\theta_2=\frac{\pi}{2}.$
    \item[(ii)]   for each $\lambda \in \Lambda_{1\epsilon}$ system \eqref{delay} has exactly one spatially nonhomogeneous steady-state solution, whose associated infinitesimal generator $\mathcal{A}_{\tau,\lambda}$ has an imaginary eigenvalue $\mathrm{i}\nu$ if and only if
\begin{equation*}
\nu=\nu_\la=-(\la-\la_*)h_\la,\;\psi= c \psi_\la,\;
\tau=\tau_{n}=\frac{\theta_\la+2n\pi}{\nu_\la},\;\; n=0,1,2,\cdots,
\end{equation*}
where $\psi_\la, w_\la, r_\la, h_\la, \theta_\la$ can be defined in a similar way to Lemma~\ref{2.7}, 
but $\lim_{\la \to \la_*} \\$$\theta_\la$$ = \theta_1$$ = \frac{3\pi}{2}$.
\end{enumerate}
\end{theorem}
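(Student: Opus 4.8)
The plan is to assemble ingredients that are already in place: Theorem~\ref{2.1} supplies the steady state, the characteristic operator $\Delta(\lambda,\mu,\tau)$ from \eqref{triangle} converts the eigenvalue question into the solvability of \eqref{eigen}, and Lemma~\ref{2.7} settles the unique solvability of the reduced system $G=0$. The only genuinely new content is the bookkeeping that translates between the phase parameter $\theta$ and the delay $\tau$. I would begin by invoking Theorem~\ref{2.1}: under the hypothesis $\int_\Omega f^{\prime}_u(x,0)\phi^2\phi^*\,dx\neq0$, for each $\lambda$ in a punctured neighborhood of $\lambda_*$ there is exactly one nontrivial steady state $u_\lambda=t_\lambda\phi+h(t_\lambda\phi,\lambda)$, which yields the ``exactly one steady state'' clause. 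Next I would recall the eigenvalue characterization encoded in \eqref{triangle}: $\mu=i\nu$ (with $\nu>0$) is an eigenvalue of $\mathcal{A}_{\tau,\lambda}$ precisely when some nonzero $\psi\in X_{\mathbb{C}}$ solves \eqref{eigen}, where $\theta$ plays the role of $\nu\tau$ read modulo $2\pi$ (since $e^{-\mu\tau}=e^{-i\nu\tau}=e^{-i\theta}$). Thus the existence of a purely imaginary eigenvalue is equivalent to the solvability of \eqref{eigen} for some $\nu>0$ and $\theta\in[0,2\pi)$, with $\tau$ recovered from $\nu\tau\equiv\theta\pmod{2\pi}$.

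I would then carry out the reduction set up just before Lemma~\ref{2.7}. Substituting $u_\lambda=t_\lambda\phi+h(t_\lambda\phi,\lambda)$, writing $\psi=r\phi+w$ as in \eqref{2.30}, and scaling $\nu=|\lambda-\lambda_*|h$, equation \eqref{eigen} becomes \eqref{2.31}, whose solvability is equivalent to $G(w,r,h,\theta,\lambda)=0$. By Lemma~\ref{2.7}, for each $\lambda\in\Lambda_{2\epsilon}$ this system has exactly one solution $(w_\lambda,r_\lambda,h_\lambda,\theta_\lambda)$ with $h_\lambda,r_\lambda\geq0$ and $\theta_\lambda\in[0,2\pi)$. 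Crucially, Lemma~\ref{nu} guarantees that every solution of \eqref{eigen} has $\nu/|\lambda-\lambda_*|$ bounded, so the scaling $\nu=|\lambda-\lambda_*|h$ discards no eigenvalues. Combining these two facts, the only purely imaginary eigenvalue with $\nu>0$ is $i\nu_\lambda$ with $\nu_\lambda=(\lambda-\lambda_*)h_\lambda$ (recall $\lambda>\lambda_*$ on $\Lambda_{2\epsilon}$) and eigenfunction $\psi_\lambda=r_\lambda\phi+w_\lambda$ up to a nonzero scalar $c$, while the admissible phase is exactly $\theta=\theta_\lambda$.

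Finally, to extract the delays I would solve $\nu_\lambda\tau\equiv\theta_\lambda\pmod{2\pi}$ subject to $\tau\geq0$ and $\nu_\lambda>0$, which gives precisely $\tau=\tau_n=(\theta_\lambda+2n\pi)/\nu_\lambda$ for $n=0,1,2,\dots$. The limit $\lim_{\lambda\to\lambda_*}\theta_\lambda=\theta_2=\pi/2$ is read off directly from Lemma~\ref{2.7}, whose implicit-function construction is based at $(0,1,h_{\lambda_*},\pi/2)$ in accordance with \eqref{2.32}. Part~(ii) is handled identically, the only change being that $\lambda\in\Lambda_{1\epsilon}$ forces $|\lambda-\lambda_*|=-(\lambda-\lambda_*)$, so $\nu_\lambda=-(\lambda-\lambda_*)h_\lambda$, and the analogue of \eqref{2.32} pins the base phase at $\theta_1=3\pi/2$.

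The main obstacle — indeed the only place where care is required — is verifying that the scaling $\nu=|\lambda-\lambda_*|h$, in conjunction with the boundedness supplied by Lemma~\ref{nu}, genuinely captures \emph{every} purely imaginary eigenvalue, so that the uniqueness statement of Lemma~\ref{2.7} translates into uniqueness of $i\nu_\lambda$; alongside this one must keep the $\theta\leftrightarrow\tau$ correspondence consistent modulo $2\pi$ and correctly account for the scalar indeterminacy $c$ in the eigenfunction. The remaining steps are a direct transcription of the already established equivalences.
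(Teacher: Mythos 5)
Your proposal is correct and follows essentially the same route as the paper: the paper derives Theorem~\ref{c25} directly from the observation that $(\nu,\theta,\psi)$ solves \eqref{eigen} (with the normalization \eqref{2.30} and the scaling $\nu=|\lambda-\lambda_*|h$) if and only if \eqref{3.6G} admits a solution, and then applies the existence--uniqueness statement of Lemma~\ref{2.7} together with the $e^{-i\nu\tau}=e^{-i\theta}$ correspondence to read off $\tau_n=(\theta_\lambda+2n\pi)/\nu_\lambda$. Your additional remarks on Lemma~\ref{nu} guaranteeing that the scaling misses no eigenvalue, and on the sign of $\lambda-\lambda_*$ distinguishing parts (i) and (ii), are exactly the points the paper leaves implicit.
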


Using a similar argument as above, we have:
\begin{theorem}
Let $\widetilde{\Delta}(\lambda,\mathrm{i}\nu_{\lambda},\tau_{\lambda,n})$ be the adjoint operator of $\Delta(\lambda,\mathrm{i}\nu_{\lambda},\tau_{\lambda,n})$. Then
\begin{equation*}
\begin{split}
&\widetilde\Delta(\la,\mu,\tau)\widetilde\psi:=\\&\nabla \cdot[d(x) \nabla \widetilde\psi]+\vec{b}(x)\cdot \nabla\widetilde\psi +\la f\left(x,u_{\lambda}\right)\widetilde\psi+ \lambda u_{\lambda} f^{\prime}_u\left(x,u_{\lambda}\right)\widetilde\psi e^{-\mu\tau}+\mu\widetilde\psi.
\end{split}
\end{equation*}
and the kernel space $\mathscr{N}\left(\widetilde{\Delta}(\lambda,\mathrm{i}\nu_{\lambda},\tau_{\lambda,n})\right) = \operatorname{span}\left\{\widetilde{\psi}_{\lambda}\right\}$. Moreover, ignoring a scalar factor, $\widetilde{\psi}_{\lambda}$ can be represented as
\begin{equation}\label{2.37}
\begin{cases}
\widetilde{\psi}_{\lambda} = \widetilde{r}_{\lambda}\phi^{*} + \widetilde{w}_{\lambda}, & \widetilde{r}_{\lambda} \geq 0,\ \widetilde{w}_{\lambda} \in (\widetilde{X}_{1})_{\mathbb{C}}, \\
\|\widetilde{\psi}_{\lambda}\|_{X} = \|\phi^{*}\|_{X},
\end{cases}
\end{equation}
and satisfies $\lim_{\lambda\rightarrow\lambda_{*}}\widetilde{\psi}_{\lambda} = \phi^{*}$ in $(C^{2}(\overline{\Omega}))_{\mathbb{C}}$, where $\phi^{*}$ is defined in lemma \ref{2.5}.
\end{theorem}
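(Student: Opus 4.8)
The plan is to obtain $\widetilde\Delta$ by duality, then transfer the one-dimensionality of $\mathscr N(\Delta(\la,\mathrm{i}\nu_\la,\tau_{\lambda,n}))$ to its adjoint via the Fredholm alternative, and finally to rerun the asymptotic argument of Lemma~\ref{nu} on the adjoint eigenproblem so as to identify the limit $\phi^{*}$. Throughout I would reuse the pairing $\langle u,v\rangle_{Y_{\mathbb C}}=\int_\Om\overline u\,v\,dx$ and the decomposition machinery already in place.

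First I would verify the displayed formula for $\widetilde\Delta(\la,\mu,\tau)$ by a direct integration by parts. The only term that changes shape is the divergence-form elliptic part: integrating by parts twice and discarding the boundary integrals by virtue of the Dirichlet condition $\psi=\widetilde\psi=0$ on $\partial\Om$ converts the divergence operator $\nabla\cdot[d(x)\nabla\psi-\vec b(x)\psi]$ appearing in \eqref{triangle} into the advective operator $\nabla\cdot[d(x)\nabla\widetilde\psi]+\vec b(x)\cdot\nabla\widetilde\psi$, which is exactly the passage from \eqref{2.5} to \eqref{2.6}. The zeroth-order factors $\la f(x,u_\la)$ and $\la u_\la f^{\prime}_u(x,u_\la)$ carry real coefficients and are therefore unaffected, while the spectral term and the delay factor $e^{-\mu\tau}$ pass to the adjoint as recorded in the statement. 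Evaluating at $\mu=\mathrm{i}\nu_\la$ and $\tau=\tau_{\lambda,n}$ then produces the operator $\widetilde\Delta(\la,\mathrm{i}\nu_\la,\tau_{\lambda,n})$.

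Next I would establish one-dimensionality of the adjoint kernel. By Theorem~\ref{c25}, $\mathrm{i}\nu_\la$ is an eigenvalue of the generator $A_{\tau_{\lambda,n}}(\la)$, so $\Delta(\la,\mathrm{i}\nu_\la,\tau_{\lambda,n})$ is non-invertible with $\mathscr N(\Delta(\la,\mathrm{i}\nu_\la,\tau_{\lambda,n}))=\operatorname{span}\{\psi_\la\}$ one-dimensional. Writing $\Delta(\la,\mathrm{i}\nu_\la,\tau_{\lambda,n})=(L_{\la_*}-I)+K$, where $K$ collects the bounded multiplication terms, I would observe that $K:X_{\mathbb C}\to Y_{\mathbb C}$ is compact (through the compact embedding $X_{\mathbb C}\hookrightarrow Y_{\mathbb C}$) and that $L_{\la_*}-I$ is invertible, so $\Delta(\la,\mathrm{i}\nu_\la,\tau_{\lambda,n})$ is Fredholm of index zero. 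Hence its cokernel is one-dimensional; as that cokernel is annihilated precisely by $\mathscr N(\widetilde\Delta(\la,\mathrm{i}\nu_\la,\tau_{\lambda,n}))$, we get $\dim\mathscr N(\widetilde\Delta(\la,\mathrm{i}\nu_\la,\tau_{\lambda,n}))=1$, that is, $\mathscr N(\widetilde\Delta(\la,\mathrm{i}\nu_\la,\tau_{\lambda,n}))=\operatorname{span}\{\widetilde\psi_\la\}$ with $\widetilde\psi_\la\neq0$, which I then normalize and decompose as in \eqref{2.37}.

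Finally I would reproduce Step~2 of Lemma~\ref{nu} for the adjoint problem. Using $\lim_{\la\to\la_*}\nu_\la=0$ and $\lim_{\la\to\la_*}u_\la=0$, I would rewrite the adjoint eigen-equation as $(L^{*}_{\la_*}-I)\widetilde\psi_\la=-\widetilde M_\la\widetilde\psi_\la$, with $\widetilde M_\la\widetilde\psi_\la\to\widetilde\psi$ along any subsequence for which $\widetilde\psi_\la\to\widetilde\psi$ in $(C^{\alpha}(\cOm))_{\mathbb C}$; the continuity of $(L^{*}_{\la_*}-I)^{-1}$ from $(C^{\alpha}(\cOm))_{\mathbb C}$ into $(C^{2}(\cOm))_{\mathbb C}$ then upgrades the convergence to $(C^{2}(\cOm))_{\mathbb C}$ and forces $L^{*}_{\la_*}\widetilde\psi=0$. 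Since $0$ is the \emph{principal}, hence simple, eigenvalue of $L^{*}_{\la_*}$ with eigenfunction $\phi^{*}$ by Lemma~2.1, we get $\widetilde\psi=c\phi^{*}$ with $|c|=1$, and the sign normalization $\widetilde r_\la\ge0$ fixes $c=1$, giving $\lim_{\la\to\la_*}\widetilde\psi_\la=\phi^{*}$ in $(C^{2}(\cOm))_{\mathbb C}$. The hard part is exactly this last step: as in Lemma~\ref{nu}, the delicate points are ruling out a nonzero subsequential limit of $\nu_\la$ for the adjoint problem and promoting the $C^{\alpha}$ subsequential limit to genuine $C^{2}$ convergence, both of which rest on elliptic (Schauder) estimates together with the non-self-adjoint eigenvalue theory of Lemma~2.1 guaranteeing that $0$ is a simple principal eigenvalue of $L^{*}_{\la_*}$, thereby pinning the limit to the single direction $\phi^{*}$.
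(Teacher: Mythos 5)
Your proposal is correct and matches the route the paper intends: the paper offers no written proof of this theorem (it is introduced only with ``Using a similar argument as above''), and your plan --- integration by parts for the form of $\widetilde\Delta$, a Fredholm-index-zero argument transferring $\dim\mathscr{N}(\Delta)=1$ to the adjoint, and a rerun of Step~2 of Lemma~\ref{nu} with $(L^{*}_{\lambda_*}-I)^{-1}$ in place of $(L_{\lambda_*}-I)^{-1}$ to pin the limit to $\phi^{*}$ --- is exactly the ``similar argument'' being invoked. The only simplification available is that you need not re-derive $\nu_\lambda\to 0$ for the adjoint problem, since $\nu_\lambda$ is the same scalar already controlled by Lemma~\ref{nu}.
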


Now, we give some estimates to prove the simplicity of $i\nu_\la$.

\begin{lemma}\label{thm34}
Assume that $\lambda \in \Lambda$. Then,
for $n=0,1,2,\cdots$, \begin{equation}\label{sn}
S_{n}(\lambda):=\left[\int_{\Omega} \psi_{\lambda} \overline{\widetilde{\psi}}_{\lambda} d x+\lambda \tau_{\lambda, n} e^{-\mathrm{i} \theta_{\lambda}} \int_{\Omega} f^{\prime}_u\left(x,u_{\lambda}\right) u_{\lambda} \psi_{\lambda} \overline{\widetilde{\psi}}_{\lambda} d x\right]\neq 0.
\end{equation}
where $\psi_{\la}$ is defined as
in Theorem \ref{c25}.
\end{lemma}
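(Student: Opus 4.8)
The plan is to recognize that $S_n(\lambda)$ is, up to sign, the pairing $-\langle\widetilde{\psi}_\lambda,\,\partial_\mu\Delta(\lambda,\mathrm{i}\nu_\lambda,\tau_{\lambda,n})\psi_\lambda\rangle$, so its nonvanishing is exactly the condition that makes $\mathrm{i}\nu_\lambda$ a simple eigenvalue. Rather than estimate $S_n(\lambda)$ directly at a fixed $\lambda$, I would analyse its behaviour as $\lambda\to\lambda_*$ and transfer the conclusion to a (possibly smaller) neighbourhood $\Lambda$ by continuity, shrinking $\epsilon$ if necessary. The decisive structural observation is that the factor $\tau_{\lambda,n}$, which blows up like $1/\nu_\lambda$ as $\lambda\to\lambda_*$ and also grows linearly in $n$, multiplies $u_\lambda$, which vanishes to first order in $\lambda-\lambda_*$; the two effects balance, and the $n$-dependence can then be controlled through the imaginary part alone.

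Concretely, I would first substitute $\tau_{\lambda,n}=(\theta_\lambda+2n\pi)/\nu_\lambda$ and write
\[
S_n(\lambda)=A_\lambda+(\theta_\lambda+2n\pi)\,C_\lambda,\qquad A_\lambda:=\int_\Omega \psi_\lambda\,\overline{\widetilde{\psi}}_\lambda\,dx,\quad C_\lambda:=\frac{\lambda\,e^{-\mathrm{i}\theta_\lambda}}{\nu_\lambda}\int_\Omega f^{\prime}_u(x,u_\lambda)\,u_\lambda\,\psi_\lambda\,\overline{\widetilde{\psi}}_\lambda\,dx.
\]
From Lemma~\ref{nu} and the adjoint eigenfunction expansion \eqref{2.37} one has $\psi_\lambda\to\phi$ and $\widetilde{\psi}_\lambda\to\phi^*$, so $A_\lambda\to\int_\Omega\phi\phi^*\,dx>0$, a real quantity. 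For $C_\lambda$ I would insert $u_\lambda=(\lambda-\lambda_*)\beta_{\lambda_*}\phi+o(\lambda-\lambda_*)$ together with $\nu_\lambda=(\lambda-\lambda_*)h_\lambda$, $h_\lambda\to h_{\lambda_*}=\int_\Omega m\phi\phi^*\,dx/\int_\Omega\phi\phi^*\,dx$, and $e^{-\mathrm{i}\theta_\lambda}\to e^{-\mathrm{i}\pi/2}=-\mathrm{i}$ on $\Lambda_{2\epsilon}$ (Theorem~\ref{c25}). The common factor $\lambda-\lambda_*$ cancels, and invoking the identity \eqref{al} to replace $\lambda_*\beta_{\lambda_*}\int_\Omega f^{\prime}_u(x,0)\phi^2\phi^*\,dx$ by $-\int_\Omega m\phi\phi^*\,dx$ collapses the limit to
\[
\lim_{\lambda\to\lambda_*}C_\lambda=\mathrm{i}\int_\Omega\phi\phi^*\,dx.
\]

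The payoff is that this limit is purely imaginary with strictly positive imaginary part. Taking imaginary parts gives $\operatorname{Im}S_n(\lambda)=\operatorname{Im}A_\lambda+(\theta_\lambda+2n\pi)\operatorname{Im}C_\lambda$, where $\operatorname{Im}A_\lambda\to0$ and $\operatorname{Im}C_\lambda\to\int_\Omega\phi\phi^*\,dx>0$. Choosing $\epsilon$ small enough that $\operatorname{Im}C_\lambda>0$ and $\operatorname{Im}S_0(\lambda)=\operatorname{Im}A_\lambda+\theta_\lambda\operatorname{Im}C_\lambda>0$ on $\Lambda_{2\epsilon}$—both being legitimate since their limits are $\int_\Omega\phi\phi^*\,dx$ and $(\pi/2)\int_\Omega\phi\phi^*\,dx$—I conclude, for every $n\geq0$,
\[
\operatorname{Im}S_n(\lambda)=\operatorname{Im}S_0(\lambda)+2n\pi\operatorname{Im}C_\lambda>0,
\]
so that $S_n(\lambda)\neq0$ for all $n$ at once. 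The case $\lambda\in\Lambda_{1\epsilon}$ is identical after replacing $\theta_2=\pi/2$ by $\theta_1=3\pi/2$ and $\nu_\lambda=(\lambda-\lambda_*)h_\lambda$ by $\nu_\lambda=-(\lambda-\lambda_*)h_\lambda$; one checks the two attendant sign changes (in $e^{-\mathrm{i}\theta_\lambda}$ and in $\nu_\lambda$) cancel, so that $\lim_{\lambda\to\lambda_*}C_\lambda=\mathrm{i}\int_\Omega\phi\phi^*\,dx$ once more.

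I expect the main obstacle to be the rigorous evaluation of $\lim_{\lambda\to\lambda_*}C_\lambda$: it is a quotient of two quantities that both tend to zero, so one must control the first-order expansions of $u_\lambda$, $\nu_\lambda$ and the eigenfunctions in the appropriate $C^{2,\alpha}$ topology and verify in particular that the remainder $h(t_\lambda\phi,\lambda)/(\lambda-\lambda_*)\to0$ does not contaminate the leading term—precisely the estimates supplied by Lemma~\ref{nu} and Theorem~\ref{2.1}. Once that limit is secured, the separation into real and imaginary parts disposes of the $n$-dependence at no additional cost.
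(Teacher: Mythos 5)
Your proposal is correct and follows essentially the same route as the paper: both pass to the limit $\lambda\to\lambda_*$ using $\psi_\lambda\to\phi$, $\widetilde{\psi}_\lambda\to\phi^*$, $u_\lambda\sim(\lambda-\lambda_*)\beta_{\lambda_*}\phi$, $\nu_\lambda=\pm(\lambda-\lambda_*)h_\lambda$, $\theta_\lambda\to\pi/2$ (resp.\ $3\pi/2$), and the identity \eqref{al}, arriving at $\lim_{\lambda\to\lambda_*}S_n(\lambda)=\bigl[1+\mathrm{i}(\tfrac{\pi}{2}+2n\pi)\bigr]\int_\Omega\phi\phi^*\,dx\neq0$. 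Your extra step of isolating $\operatorname{Im}S_n(\lambda)=\operatorname{Im}S_0(\lambda)+2n\pi\operatorname{Im}C_\lambda$ is a welcome refinement, since it makes the choice of $\epsilon$ uniform in $n$ --- a point the paper's limit-for-each-fixed-$n$ argument leaves implicit.
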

\begin{proof}
Assuming $\lambda \in \Lambda_{2\epsilon}$ without loss of generality(if $\lambda \in \Lambda_{1\epsilon}$, the proof is similar).
It follows from lemma \ref{2.7} and throrem \ref{c25} that $\theta_{\la}\to \pi/2$, $\tau_{\lambda,n}(\la-\la_*)\to (\ds\f{\pi}{2}+2n\pi)/h_{\la_*}$, $\psi_\la\to\phi$ in $X_{\mathbb{C}}$ as $\la\to\la_*$. This, combined with Eq. \eqref{al}, yields
\begin{equation}\label{eq31}
\begin{split}
&\lim_{\la\to\la_*}S_n(\la) \\
=&\int_{\Omega} \phi\phi^* dx-\ds\f{i\beta_{\la_*}\la_*}{h_{\la_*}}\left(\ds\f{\pi}{2}+2n\pi\right)\int_\Omega f^{\prime}_{u}(x, 0)\phi^2\phi^*dx\\
=&\left[1+i(\frac{\pi}{2}+2n\pi)\right]\int_{\Om}\phi\phi^*dx\ne0.
 \end{split}
\end{equation}

This completes the proof.
\end{proof}
Then we show that the imaginary eigenvalue i$\nu$ is simple.
\begin{theorem}\label{thm34a}
Assume that
$\lambda \in \Lambda$. Then $\mu=i\nu_\lambda$ is a simple
eigenvalue of $A_{\tau_{n}}$ for $n=0,1,2,\cdots$, where $i\nu_\la$ and $\tau_{\lambda,n}$ are defined as in Theorem \ref{c25}.
\end{theorem}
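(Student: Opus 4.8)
The plan is to prove simplicity in two stages: first confirm that the geometric eigenspace attached to $i\nu_\lambda$ is one-dimensional, then rule out any generalized eigenvector by reducing the solvability obstruction to the quantity $S_n(\lambda)$, which Lemma~\ref{thm34} has already shown to be nonzero.

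For geometric simplicity, suppose $A_{\tau_n}(\lambda)\Psi = i\nu_\lambda\Psi$. Then $\dot\Psi = i\nu_\lambda\Psi$ forces $\Psi(\theta) = e^{i\nu_\lambda\theta}\Psi(0)$, and the boundary relation defining $\mathscr{D}(A_{\tau_n}(\lambda))$, evaluated at $\theta = 0$, reduces precisely to $\Delta(\lambda, i\nu_\lambda, \tau_{\lambda,n})\Psi(0) = 0$. By the uniqueness established in Lemma~\ref{2.7} and Theorem~\ref{c25}, we have $\mathscr{N}(\Delta(\lambda, i\nu_\lambda, \tau_{\lambda,n})) = \operatorname{span}\{\psi_\lambda\}$, so the eigenspace is $\{c\,e^{i\nu_\lambda\theta}\psi_\lambda : c \in \mathbb{C}\}$, hence one-dimensional.

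The substantive step is to show there is no Jordan chain of length two, i.e.\ $\mathscr{N}\big((A_{\tau_n} - i\nu_\lambda)^2\big) = \mathscr{N}(A_{\tau_n} - i\nu_\lambda)$; since the kernel is one-dimensional, ruling out a length-two chain terminates all longer chains. Suppose, for contradiction, that some $\Phi \in \mathscr{D}(A_{\tau_n})$ satisfies $(A_{\tau_n} - i\nu_\lambda)\Phi = \Psi$ with $\Psi(\theta) = e^{i\nu_\lambda\theta}\psi_\lambda$. Because $A_{\tau_n}\Phi = \dot\Phi$, this is the ODE $\dot\Phi - i\nu_\lambda\Phi = e^{i\nu_\lambda\theta}\psi_\lambda$, whose general solution is $\Phi(\theta) = e^{i\nu_\lambda\theta}(\theta\psi_\lambda + \chi)$ for some $\chi \in X_\mathbb{C}$. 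Substituting $\Phi(0) = \chi$, $\Phi(-\tau_{\lambda,n}) = e^{-i\theta_\lambda}(-\tau_{\lambda,n}\psi_\lambda + \chi)$, and $\dot\Phi(0) = i\nu_\lambda\chi + \psi_\lambda$ into the domain condition, and using $e^{-i\nu_\lambda\tau_{\lambda,n}} = e^{-i\theta_\lambda}$, the terms involving $\chi$ assemble into $\Delta(\lambda, i\nu_\lambda, \tau_{\lambda,n})\chi$ and one obtains the inhomogeneous problem
\begin{equation*}
\Delta(\lambda, i\nu_\lambda, \tau_{\lambda,n})\chi = \big[1 + \lambda\tau_{\lambda,n}e^{-i\theta_\lambda}u_\lambda f^{\prime}_u(x, u_\lambda)\big]\psi_\lambda.
\end{equation*}
By the Fredholm alternative for the elliptic operator $\Delta(\lambda, i\nu_\lambda, \tau_{\lambda,n})$, whose cokernel is $\operatorname{span}\{\widetilde\psi_\lambda\}$, this equation is solvable for $\chi$ if and only if the right-hand side is orthogonal to $\widetilde\psi_\lambda$ in $Y_\mathbb{C}$. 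Pairing against $\widetilde\psi_\lambda$ reproduces exactly $S_n(\lambda)$ from \eqref{sn}, so solvability would force $S_n(\lambda) = 0$, contradicting Lemma~\ref{thm34}. Hence no such $\Phi$ exists, and the algebraic and geometric multiplicities both equal one, so $i\nu_\lambda$ is simple.

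I expect the main obstacle to be the bookkeeping in passing from the domain condition to the displayed inhomogeneous equation---in particular, tracking the extra contribution $-\lambda\tau_{\lambda,n}e^{-i\theta_\lambda}u_\lambda f^{\prime}_u(x, u_\lambda)\psi_\lambda$ that arises from differentiating the $\theta\psi_\lambda$ factor at the delayed argument $\theta = -\tau_{\lambda,n}$---and confirming that the resulting solvability functional coincides with $S_n(\lambda)$ exactly, rather than a sign- or scaling-altered variant.
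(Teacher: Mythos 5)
Your proposal is correct and follows essentially the same route as the paper: solve the ODE for the putative generalized eigenvector to get $\Phi(\theta)=e^{i\nu_\lambda\theta}(\theta\psi_\lambda+\chi)$, reduce the domain condition to the inhomogeneous problem $\Delta(\lambda,i\nu_\lambda,\tau_{\lambda,n})\chi=\bigl[1+\lambda\tau_{\lambda,n}e^{-i\theta_\lambda}u_\lambda f'_u(x,u_\lambda)\bigr]\psi_\lambda$, and pair with $\widetilde\psi_\lambda$ to force $S_n(\lambda)=0$, contradicting Lemma~\ref{thm34}. The only cosmetic difference is that you normalize the eigenvector coefficient to $1$ and phrase the solvability obstruction as a Fredholm alternative, whereas the paper carries a general constant $a$ and shows $aS_n(\lambda)=0$ directly via the adjoint identity $\langle\widetilde\Delta\widetilde\psi_\lambda,\cdot\rangle=\langle\widetilde\psi_\lambda,\Delta\,\cdot\rangle$.
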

\begin{proof}
It follows from Theorem \ref{c25} that $\mathscr{N}[A_{\tau_{n}}
(\lambda)-i\nu_\lambda]=\text{Span}[e^{i\nu_\lambda\theta}\psi_\lambda]$, where $\theta\in[-\tau_{\lambda,n},0]$ and $\psi_\la$ is defined as in
Theorem \ref{c25}.
If
$\phi_1\in\mathscr{N}[A_{\tau_{n}}
(\lambda)-i\nu_\lambda]^2$,
then
$$
[A_{\tau_{n}}
(\lambda)-i\nu_\lambda]\phi_1\in\mathscr{N}[A_{\tau_{n}}(\lambda)-i\nu_\lambda]=
\text{Span}[e^{i\nu_\lambda\theta}\psi_\lambda].
$$

Therefore, there exists a constant $a$ such that
$$
[A_{\tau_{n}}
(\lambda)-i\nu_\lambda]\phi_1=ae^{i\nu_\lambda\theta}\psi_\lambda,
$$

which yields
\begin{equation}
 \label{eq32}
\begin{split}
\dot{\phi_1}(\theta)&=i\nu_\lambda\phi_1(\theta)+ae^{i\nu_\lambda\theta}\psi_\lambda,
\ \ \ \ \theta\in[-\tau_{n},0], \\
 \dot{\phi_1}(0)&=
 \nabla\cdot\left[d(x)\nabla\phi_{1}(0)-\vec{b}(x)\phi_{1}(0)\right]+\lambda f\left(x, u_{\lambda}\right)\phi_{1}(0)  \\
&\quad +\lambda f^{\prime}_u\left(x, u_{\lambda}\right)u_{\lambda}\phi_{1}(-\tau_{\lambda,n}). 
 \end{split}
 \end{equation}
 
From the first equation of Eq. \eqref{eq32}, we see that
\begin{equation}
\label{eq33}
\begin{split}
\phi_1(\theta)&=\phi_1(0)e^{i\nu_\lambda\theta}+a\theta
e^{i\nu_\lambda\theta}\psi_\lambda,\\
\dot{\phi_1}(0)&=i\nu_\lambda\phi_1(0)+a\psi_\lambda.
\end{split}
\end{equation}

Eq. \eqref{eq32} and Eq. \eqref{eq33} imply that
\begin{equation}\label{provesimple}
\begin{split}
\begin{aligned}
\Delta\left(\lambda,\mathrm{i}\nu_{\lambda},\tau_{\lambda, n}\right)\phi_{1}(0) = &\nabla\cdot\left[d(x)\nabla\phi_{1}(0)-\vec{b}(x)\phi_{1}(0)\right]+\lambda f\left(x, u_{\lambda}\right)u_{\lambda}\phi_{1}(0)\\
&+\lambda f^{\prime}_u\left(x, u_{\lambda}\right)u_{\lambda}u_{\lambda}\phi_{1}(0) e^{-\mathrm{i}\theta_{\lambda}}-\mathrm{i}\nu_{\lambda}\phi_{1}(0)\\
= &a\psi_{\lambda}+a\lambda\tau_{\lambda, n}f^{\prime}_u\left(x, u_{\lambda}\right)u_{\lambda}u_{\lambda}\psi_{\lambda} e^{-\mathrm{i}\theta_{\lambda}}.
\end{aligned}
\end{split}\end{equation}

Since $\widetilde{\Delta}\left(\lambda, \mathrm{i} \nu_{\lambda}, \tau_{\lambda, n}\right) \widetilde{\psi}_{\lambda}=0$. This, combined with Eq. \eqref{provesimple}, yields
\begin{equation*}\begin{split}
0 &= \left\langle\widetilde{\Delta}\left(\lambda, \mathrm{i} \nu_{\lambda}, \tau_{\lambda, n}\right) \widetilde{\psi}_{\lambda}, \phi_{1}(0)\right\rangle = \left\langle\widetilde{\psi}_{\lambda}, \Delta\left(\lambda, \mathrm{i} \nu_{\lambda}, \tau_{\lambda, n}\right) \phi_{1}(0)\right\rangle \\
  &= a S_{n}(\lambda)
\end{split}
\end{equation*} 
which implies that
$a=0$ from Lemma \ref{thm34}. Therefore,
$$
\mathscr{N}[A_{\tau_{n}}(\lambda)-i\nu_\lambda]^j
=\mathscr{N}[A_{\tau_{n}}(\lambda)-i\nu_\lambda],\;\;j=
2,3,\cdots,\;\; n=0,1,2,\cdots,
$$
and $\lambda=i\nu_\lambda$ is a simple eigenvalue of
$A_{\tau_{n}}$ for $n=0,1,2,\cdots.$
\end{proof}
\subsection{Stability and Hopf Bifurcation}

In what follows, we only consider the case where $\ds\int_{\Om}f^{\prime}_{u}(x, 0)\phi^2\phi^{*}dx \neq 0$. In this section, we first analyze the stability of the steady-state solution $u_{\lambda}$ of \eqref{delay} when $\tau = 0$.

\begin{proposition}\label{2.13}
If $\ds\int_{\Om}f^{\prime}_{u}(x, 0)\phi^2\phi^{*}dx \neq 0$, then there exists $\varepsilon > 0$ such that for each $\lambda \in \Lambda_{2\varepsilon}$, all the eigenvalues of $\mathcal{A}_{0,\lambda}$ have negative real parts, and hence, the steady-state solution $u_{\lambda}$ of \eqref{delay} with $\tau = 0$ and $\lambda \in \Lambda_{2\varepsilon}$ is locally asymptotically stable.
\end{proposition}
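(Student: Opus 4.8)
The plan is to reduce the $\tau=0$ stability question to the spectrum of a single non-self-adjoint elliptic operator and then to follow its principal eigenvalue as $\lambda$ crosses $\lambda_*$. Setting $\mu\tau=0$ in \eqref{triangle}, the number $\mu$ is an eigenvalue of $\mathcal{A}_{0,\lambda}$ exactly when
\begin{equation*}
\mathcal{L}_\lambda\psi:=\nabla\cdot[d(x)\nabla\psi-\vec{b}(x)\psi]+\lambda\big[f(x,u_\lambda)+u_\lambda f'_u(x,u_\lambda)\big]\psi=\mu\psi
\end{equation*}
admits a nontrivial $\psi\in X_{\mathbb{C}}$. This operator has compact resolvent, so its spectrum is discrete. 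Since $u_\lambda\to 0$ as $\lambda\to\lambda_*$ and $f(x,0)=m(x)$, we have $\mathcal{L}_{\lambda_*}=L_{\lambda_*}$, whose principal eigenvalue is $0$ with eigenfunction $\phi$ (Lemma 2.1 gives $L_{\lambda_*}\phi=0$). By the Krein--Rutman theory for non-self-adjoint second-order elliptic operators (as in \cite{Hess1991,Cantrell2003}), this principal eigenvalue is real, algebraically simple, and strictly dominates the real parts of the rest of the spectrum; hence there is a spectral gap $\delta>0$ with $\mathrm{Re}\,\mu\le-\delta$ for every other eigenvalue $\mu$ of $L_{\lambda_*}$.

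Because the principal eigenvalue $0$ is simple and isolated and $\lambda\mapsto\mathcal{L}_\lambda$ is $C^1$ (the dependence entering only through the $C^1$ branch $u_\lambda$ of Theorem \ref{2.1}), analytic perturbation theory yields a $C^1$ curve $\lambda\mapsto(\mu_1(\lambda),\psi_\lambda)$ of simple principal eigenpairs with $\mu_1(\lambda_*)=0$ and $\psi_{\lambda_*}=\phi$. Differentiating $\mathcal{L}_\lambda\psi_\lambda=\mu_1(\lambda)\psi_\lambda$ at $\lambda_*$ and pairing against the adjoint principal eigenfunction $\phi^*$ (which satisfies $L^*_{\lambda_*}\phi^*=0$), the term $\langle\phi^*,L_{\lambda_*}\dot\psi_{\lambda_*}\rangle=\langle L^*_{\lambda_*}\phi^*,\dot\psi_{\lambda_*}\rangle$ vanishes, leaving
\begin{equation*}
\mu_1'(\lambda_*)=\frac{\langle\phi^*,\,(\partial_\lambda\mathcal{L}_\lambda|_{\lambda_*})\phi\rangle}{\langle\phi^*,\phi\rangle}.
\end{equation*}

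Next I would evaluate the operator derivative. Using $u_{\lambda_*}=0$ and $\dot u_{\lambda_*}=\beta_{\lambda_*}\phi$ (from the expansion $u_\lambda=(\lambda-\lambda_*)\beta_{\lambda_*}\phi+o(\lambda-\lambda_*)$ of Theorem \ref{2.1}), a direct computation gives $(\partial_\lambda\mathcal{L}_\lambda|_{\lambda_*})\phi=m(x)\phi+2\lambda_*\beta_{\lambda_*}f'_u(x,0)\phi^2$. Substituting the value \eqref{al} of $\beta_{\lambda_*}$ turns the second contribution into $2\lambda_*\beta_{\lambda_*}\int_\Omega f'_u(x,0)\phi^2\phi^*\,dx=-2\int_\Omega m(x)\phi\phi^*\,dx$, so that
\begin{equation*}
\mu_1'(\lambda_*)=-\,\frac{\int_\Omega m(x)\phi\phi^*\,dx}{\int_\Omega\phi\phi^*\,dx}<0
\end{equation*}
by Lemma \ref{lemma2.2}. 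Thus the principal eigenvalue decreases through $0$ as $\lambda$ increases through $\lambda_*$, so $\mu_1(\lambda)<0$ for all $\lambda\in\Lambda_{2\varepsilon}=(\lambda_*,\lambda_*+\varepsilon)$ once $\varepsilon$ is small. Combining this with the spectral gap and the convergence $\mathcal{L}_\lambda\to L_{\lambda_*}$ in the norm-resolvent sense (using $u_\lambda\to0$ in $C^{\alpha}(\overline\Omega)$), the remaining eigenvalues stay in $\{\mathrm{Re}<-\delta/2\}$ for $\varepsilon$ small; hence every eigenvalue of $\mathcal{A}_{0,\lambda}$ has negative real part, and the principle of linearized stability gives local asymptotic stability of $u_\lambda$.

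The main obstacle is the non-self-adjointness: there is no variational (min--max) characterization available, so both the simplicity and $C^1$-dependence of the principal eigenvalue and, more delicately, the uniform control of the remaining (possibly complex, infinitely many) eigenvalues must be extracted from Krein--Rutman theory and analytic perturbation theory. The crux is to rule out that some non-principal eigenvalue drifts into the closed right half-plane as $\lambda$ moves away from $\lambda_*$; this is precisely where the isolated spectral gap of $L_{\lambda_*}$ together with the uniform convergence $u_\lambda\to0$ must be invoked, paralleling the a priori estimates already used in Lemma \ref{nu}.
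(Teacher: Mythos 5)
Your proposal is correct, but it follows a genuinely different route from the paper. The paper argues by contradiction: it supposes a sequence $\lambda_n'\to\lambda_*$ in $\Lambda_{2\varepsilon}$ for which $\mathcal{A}_{0,\lambda_n'}$ has an eigenvalue $\mu_{\lambda_n'}$ with $\mathcal{R}e\,\mu_{\lambda_n'}\ge 0$, writes the eigenfunction as $r_{\lambda_n'}\phi+w_{\lambda_n'}$ exactly as in Lemma \ref{nu}, tests the eigenvalue equation against $\phi^*$, divides by $\lambda-\lambda_*$, and extracts the asymptotics $\mu_{\lambda_n'}/(\lambda_*-\lambda_n')\to h_{\lambda_*}>0$, which forces $\mathcal{R}e\,\mu_{\lambda_n'}<0$ for large $n$ --- a contradiction. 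You instead observe that for $\tau=0$ the generator is the elliptic operator $\mathcal{L}_\lambda$, invoke Krein--Rutman for non-self-adjoint elliptic operators to get a simple dominant principal eigenvalue, and compute $\mu_1'(\lambda_*)=-h_{\lambda_*}<0$ by the standard adjoint-pairing formula; your derivative calculation, including the cancellation produced by substituting \eqref{al} for $\beta_{\lambda_*}$, reproduces exactly the quantity the paper obtains. Your route is more transparent about \emph{why} the sign is what it is (it is literally the derivative of the principal eigenvalue along the bifurcating branch), while the paper's sequence argument recycles the compactness machinery of Lemma \ref{nu} and avoids invoking eigenvalue perturbation theory. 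One remark on the point you flag as the main obstacle: you do not actually need norm-resolvent convergence or a spectral-gap chase to control the non-principal eigenvalues. Since $\mathcal{L}_\lambda$ is, for every $\lambda\in\Lambda_{2\varepsilon}$, an elliptic operator of the same class as $L_{\lambda_*}$ (only the zeroth-order coefficient changes), Krein--Rutman applies to $\mathcal{L}_\lambda$ itself: its principal eigenvalue $\mu_1(\lambda)$ is real and strictly dominates the real parts of all other eigenvalues for each fixed $\lambda$. Hence $\mu_1(\lambda)<0$ alone already places the entire spectrum in the open left half-plane, and the only analytic input needed is the $C^1$-dependence of the simple eigenvalue $\mu_1(\lambda)$, which follows from standard perturbation theory for isolated simple eigenvalues.
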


\begin{proof}
We assume that the conclusion is not true, that is, there exists a sequence $\{\lambda_{n}^{\prime}\}_{n=1}^{\infty} \subseteq \Lambda_{2\varepsilon}$, such that $\lim_{n \to \infty} \lambda_{n}^{\prime} = \lambda_{*}$, and $\lambda_{n}^{\prime} > \lambda_{*}$ for $n \geq 1$, the corresponding eigenvalue problem
\begin{equation}\label{2.43}
\begin{cases}
\mu\psi =
\nabla \cdot[d(x) \nabla \psi-\vec{b}(x) \psi]+\la
f\left(x,u_{\lambda}\right) \psi
+ \lambda u_{\lambda} f^{\prime}_u\left(x,u_{\lambda}\right) \psi,& x\in\Om,\; \\
\psi=0,&x\in\partial\Om,\; 
\end{cases}
\end{equation}
has an eigenvalue $\mu_{\lambda_{n}^{\prime}}$ with nonnegative real part and the associated eigenfunction $\psi_{\lambda_{n}^{\prime}}$ satisfying $\|\psi_{\lambda_{n}^{\prime}}\|_{\mathbb{Y}_{\mathbb{C}}}=1$. For each $n\geq 1$, we write $\psi_{\lambda_{n}^{\prime}}$ as $\psi_{\lambda_{n}^{\prime}}=r_{\lambda_{n}^{\prime}}\phi+w_{\lambda_{n}^{\prime}}$, where $r_{\lambda_{n}^{\prime}}\geq 0$ satisfies $\lim_{n\to\infty}r_{\lambda_{n}^{\prime}}=1$ and $\langle w_{\lambda_{n}^{\prime}},\phi\rangle=0$. Substituting $\psi_{\lambda_{n}^{\prime}}=r_{\lambda_{n}^{\prime}}\phi+w_{\lambda_{n}^{\prime}}$,$u_{\lambda_{n}^{\prime}}=t_{\lambda_{n}^{\prime}} \phi+h\left(t_{\lambda_{n}^{\prime}} \phi, \lambda_{n}^{\prime}\right)$ and $\mu = \mu_{\lambda_{n}^{\prime}}$ into the first equation of \eqref{2.43}, multiplying it by $\phi^{*}(x)$ and then integrating on $\Omega$, we have
\[
\frac{1}{\lambda-\lambda_{*}} \left\{
\begin{aligned}
&\int_{\Omega} \left[ (\lambda-\lambda_{*})\phi^{*}(x)\left[m(x)+\lambda m_{1}(\xi,\beta,\lambda)-\frac{\mu}{\lambda-\lambda_{*}}\right](r_{\lambda_{n}^{\prime}}\phi(x)+w_{\lambda_{n}^{\prime}}) \right. \\
&\left. \quad + \lambda t_{\lambda}\phi(x)f^{\prime}(x,u_{\lambda})(r_{\lambda_{n}^{\prime}}\phi(x)+w_{\lambda_{n}^{\prime}})\phi^{*}(x) \right] \mathrm{d}x + o(\lambda-\lambda_{*})
\end{aligned}
\right\} = 0
\]

Since $\lim_{n \to \infty} \left\|w_{\lambda_{n}^{\prime}}\right\|_{\mathbb{Y}_{\mathbb{C}}} = 0$, then $\lim_{n \to \infty} \left\|w_{\lambda_{n}^{\prime}}\right\|_{L_{1}} = 0$.
Thus,
\begin{equation*}
\lim_{n \to \infty} \frac{\mu_{\lambda_{n}^{\prime}}}{\lambda^* - \lambda_{n}^{\prime}} = h_{\lambda_{*}} > 0,    
\end{equation*}
which implies there exists $N_{*}\in N$ such that for each $n\ge N_{*}$
\begin{equation}
\mathcal{R}e\mu_{\lambda_{n}^{\prime}}<0. 
\end{equation}

This is a contradiction with $\mathcal{R}e(\mu_{\lambda_{n}^{\prime}})\ge 0$ for $n\ge 1$. Therefore, there exists $\varepsilon>0$ such that all the eigenvalues of $\mathcal{A}_{0,\lambda}$ have negative real parts when $\lambda\in\Lambda_{2\varepsilon}$. This completes the proof of Proposition \ref{2.13}.
\end{proof}
Using similar arguments to the proof of Proposition \ref{2.13}, we can obtain the following result:
\begin{proposition}
If  $\ds\int_{\Om}f^{\prime}_{u}(x, 0)\phi^2\phi^{*}dx \neq 0$ then there exists $\varepsilon>0$ such that for each $\lambda\in\Lambda_{1\varepsilon}$, the infinitesimal generator $\mathcal{A}_{0,\lambda}$ has at least one eigenvalue with positive real parts, and hence, the steady-state solution $u_{\lambda}$ of \eqref{delay} with $\tau=0$ and $\lambda\in\Lambda_{1\varepsilon}$ is locally asymptotically unstable.
\end{proposition}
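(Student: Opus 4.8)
The plan is to mirror the proof of Proposition \ref{2.13}, the only essential difference being that here I must \emph{produce} an eigenvalue with positive real part rather than rule one out. At $\tau=0$ the operator $\mathcal{A}_{0,\la}$ reduces to the second-order elliptic operator
\begin{equation*}
\psi\mapsto \nabla\cdot[d(x)\nabla\psi-\vec b(x)\psi]+\la\big[f(x,u_\la)+u_\la f^{\prime}_u(x,u_\la)\big]\psi,
\end{equation*}
with Dirichlet boundary conditions, so by the principal-eigenvalue (Krein--Rutman) theory invoked for Lemma 2.1 it possesses a real principal eigenvalue $\mu_\la$ (the eigenvalue of largest real part), which is simple and has a positive eigenfunction $\psi_\la$. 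The idea is to track $\mu_\la$ as $\la\to\la_*$ and show that it becomes strictly positive on $\Lambda_{1\varepsilon}$ near $\la_*$; exhibiting such a positive-real-part eigenvalue immediately gives the instability of $u_\la$.

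First I would show $\lim_{\la\to\la_*}\mu_\la=0$ and $\lim_{\la\to\la_*}\psi_\la=\phi$. By Theorem \ref{2.1}, $u_\la\to 0$ in $C^\alpha(\overline\Om)$ as $\la\to\la_*$, hence the zeroth-order coefficient $\la[f(x,u_\la)+u_\la f^{\prime}_u(x,u_\la)]$ converges to $\la_* m(x)$ and the operator above converges to $L_{\la_*}$. Because $0$ is the principal eigenvalue of $L_{\la_*}$ with eigenfunction $\phi$, the continuous dependence of the principal eigenvalue and its eigenfunction on the coefficients yields $\mu_\la\to 0$ and $\psi_\la\to\phi$; after the usual normalization I write $\psi_\la=r_\la\phi+w_\la$ with $r_\la\ge 0$, $\langle w_\la,\phi\rangle=0$, $r_\la\to 1$ and $w_\la\to 0$.

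Next I would carry out the rate computation exactly as in Proposition \ref{2.13}. Substituting $\psi_\la=r_\la\phi+w_\la$, $u_\la=t_\la\phi+h(t_\la\phi,\la)$ and $\mu=\mu_\la$ into \eqref{2.43}, multiplying by $\phi^*$, integrating over $\Om$ and dividing by $\la-\la_*$, I then let $\la\to\la_*$. Using $w_\la\to 0$, the expansion $t_\la=(\la-\la_*)\beta_{\la_*}+o(\la-\la_*)$ from Theorem \ref{2.1}, and the value of $\beta_{\la_*}$ in \eqref{al}, the same algebra that produced the limit in Proposition \ref{2.13} gives
\begin{equation*}
\lim_{\la\to\la_*}\f{\mu_\la}{\la_*-\la}=h_{\la_*}=\f{\ds\int_\Om m(x)\phi\phi^*\,dx}{\ds\int_\Om\phi\phi^*\,dx}>0,
\end{equation*}
where positivity is Lemma \ref{lemma2.2}. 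Since every $\la\in\Lambda_{1\varepsilon}$ satisfies $\la<\la_*$, i.e.\ $\la_*-\la>0$, it follows that $\mu_\la>0$ for $\la\in\Lambda_{1\varepsilon}$ sufficiently close to $\la_*$. Thus $\mathcal{A}_{0,\la}$ has an eigenvalue with positive real part, and $u_\la$ is locally asymptotically unstable.

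I expect the main obstacle to be the step that has no analogue in Proposition \ref{2.13}: there one only needed to derive a contradiction from the \emph{existence} of an eigenvalue with nonnegative real part, whereas here the conclusion requires \emph{exhibiting} a genuine eigenvalue. This forces me to invoke the existence, reality, simplicity and---most delicately---continuous dependence of the principal eigenvalue of the \emph{non-self-adjoint} elliptic operator $\mathcal{A}_{0,\la}$, for which variational tools are unavailable and one must rely on Krein--Rutman theory together with analytic perturbation theory. Controlling the convergence $\psi_\la\to\phi$ in a norm strong enough to pass to the limit in the integral identity (the $C^{2,\alpha}(\overline\Om)$ convergence, exactly as in Lemma \ref{nu}) is where the technical care is concentrated.
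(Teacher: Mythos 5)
Your proposal is correct and follows essentially the route the paper intends when it says ``using similar arguments to the proof of Proposition \ref{2.13}'': the same integral identity against $\phi^{*}$ yields $\lim_{\la\to\la_*}\mu_\la/(\la_*-\la)=h_{\la_*}>0$, which forces $\mu_\la>0$ on $\Lambda_{1\varepsilon}$. You have in fact been more careful than the paper on the one step that has no analogue in the stable case, namely invoking Krein--Rutman theory to \emph{produce} the real principal eigenvalue of the non-self-adjoint elliptic operator at $\tau=0$ and to justify its continuous dependence on $\la$; the paper leaves this existence step implicit.
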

Note that $\mu=i\nu_{\la}$ is a simple eigenvalue of $A_{\tau_{n}}$. It follows from
the implicit function theorem that there
are a neighborhood $O_{n}\times D_{n}\times
H_{n}\subset\mathbb{R}\times\mathbb{C}\times X_{\mathbb{C}}$ of
$(\tau_{n},i\nu_\lambda,\psi_\lambda)$ and a continuously
differential function $(\mu(\tau),\psi(\tau)):O_{n}\rightarrow D_{n}\times
H_{n}$ such that for each $\tau\in O_{n}$, the only eigenvalue of
$A_\tau(\lambda)$ in $D_{n}$ is $\mu(\tau),$ and
\begin{equation}\label{eq34}
\begin{aligned}
\Delta(\lambda,\mu(\tau),\tau)\psi(\tau) 
&= \nabla\cdot[d(x)\nabla\psi(\tau)-\vec{b}(x)\psi(\tau)]+\lambda f(x,u_{\lambda})\psi(\tau) \\
&\quad +\lambda u_{\lambda}f^{\prime}(x,u_{\lambda})\psi(\tau)e^{-\mu(\tau)\tau}-\mu(\tau)\psi(\tau) \\
&= 0.
\end{aligned}
\end{equation}

Moreover, $
\mu(\tau_{n})=i\nu_\lambda$, and $\psi(\tau_{n})=\psi_\lambda$.
Then we have the following transversality
condition.
\begin{theorem}\label{thm35}

Assume $\ds\int_{\Om}f^{\prime}_{u}(x, 0)\phi^2\phi^{*}dx \neq 0$ and $\lambda \in \Lambda$, then for each fixed $n \in N$, there exist a neighborhood $O_{n} \times D_{n} \times H_{n} \subset \mathbb{R} \times \mathbb{C} \times \mathbb{X}_{\mathbb{C}}$ of $(\tau_{n,\lambda}, \mathrm{i}\nu_{\lambda}, \psi_{\lambda})$ and a continuously differentiable function $(\mu, \psi) : O_{n} \to D_{n} \times H_{n}$ satisfying $\mu(\tau_{n,\lambda}) = \mathrm{i}\nu_{\lambda}$ and $\psi(\tau_{n,\lambda}) = \psi_{\lambda}$ such that the only eigenvalue of $\mathcal{A}_{\tau,\lambda}$ in $D_{n}$ is $\mu(\tau)$, and eq.\eqref{eq34} holds. Moreover,

\[
\frac{\mathrm{d}\mathcal{R}e(\mu(\tau_{n,\lambda}))}{\mathrm{d}\tau} > 0, \quad n \in N.
\]
\end{theorem}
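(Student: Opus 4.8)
The plan is to differentiate the defining identity \eqref{eq34} with respect to $\tau$ at $\tau=\tau_{n,\lambda}$ and then extract $\mu'(\tau_{n,\lambda})$ by pairing against the adjoint eigenfunction $\widetilde{\psi}_\la$ furnished by the theorem preceding Lemma \ref{thm34}. First I would differentiate $\Delta(\la,\mu(\tau),\tau)\psi(\tau)=0$ in $\tau$, applying the chain rule to the factor $e^{-\mu(\tau)\tau}$, whose $\tau$-derivative is $-(\mu'(\tau)\tau+\mu(\tau))e^{-\mu(\tau)\tau}$. Evaluating at $\tau=\tau_{n,\lambda}$, where $\mu=\mathrm{i}\nu_\la$, $\psi=\psi_\la$ and $e^{-\mu\tau}=e^{-\mathrm{i}\theta_\la}$, the terms carrying $\psi'(\tau_{n,\lambda})$ reassemble exactly into $\Delta(\la,\mathrm{i}\nu_\la,\tau_{n,\lambda})\psi'(\tau_{n,\lambda})$, while the remaining terms are proportional to $\psi_\la$, to $\mu'(\tau_{n,\lambda})$, and to $\mathrm{i}\nu_\la$.

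The key step is to take the $\mathbb{Y}_{\mathbb C}$-inner product of the differentiated identity with $\widetilde{\psi}_\la$. Since $\widetilde{\Delta}(\la,\mathrm{i}\nu_\la,\tau_{\la,n})\widetilde{\psi}_\la=0$ and $\widetilde{\Delta}$ is the adjoint of $\Delta$, the pairing $\langle\widetilde{\psi}_\la,\Delta(\la,\mathrm{i}\nu_\la,\tau_{n,\lambda})\psi'(\tau_{n,\lambda})\rangle$ vanishes, which eliminates the unknown $\psi'$. Solving the resulting scalar identity for $\mu'(\tau_{n,\lambda})$ gives
\[
\mu'(\tau_{n,\lambda})=\f{-\mathrm{i}\la\nu_\la e^{-\mathrm{i}\theta_\la}\ds\int_\Om f^{\prime}_{u}(x,u_\la)u_\la\psi_\la\overline{\widetilde{\psi}}_\la\,dx}{S_n(\la)},
\]
where the denominator is precisely $S_n(\la)$ from \eqref{sn}; its nonvanishing, established in Lemma \ref{thm34}, makes the expression well-defined.

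It remains to prove $\mathcal{R}e\big(\mu'(\tau_{n,\lambda})\big)>0$, which I regard as the main obstacle, since the numerator carries no obvious sign without asymptotic information. I would pass to the limit $\la\to\la_*$ (shrinking $\epsilon$ if necessary), using $\nu_\la=(\la-\la_*)h_\la$, $\theta_\la\to\pi/2$, $\psi_\la\to\phi$, $\widetilde{\psi}_\la\to\phi^*$, and $u_\la\sim(\la-\la_*)\beta_{\la_*}\phi$ from Theorem \ref{2.1} and Lemma \ref{nu}. The leading behaviour of the numerator is then $-\la_*(\la-\la_*)^2 h_{\la_*}\beta_{\la_*}\int_\Om f^{\prime}_{u}(x,0)\phi^2\phi^*\,dx$, and substituting the formula \eqref{al} for $\beta_{\la_*}$ converts $\la_*\beta_{\la_*}\int_\Om f^{\prime}_{u}(x,0)\phi^2\phi^*\,dx$ into $-\int_\Om m(x)\phi\phi^*\,dx$ (the factor $e^{-\mathrm{i}\pi/2}=-\mathrm{i}$ cancelling against the leading $-\mathrm{i}$), so the numerator reduces to $(\la-\la_*)^2 h_{\la_*}\int_\Om m(x)\phi\phi^*\,dx$. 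Meanwhile \eqref{eq31} gives the denominator limit $[1+\mathrm{i}(\pi/2+2n\pi)]\int_\Om\phi\phi^*\,dx$.

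Finally, since $\mathcal{R}e\big(1/(1+\mathrm{i}c)\big)=1/(1+c^2)>0$ with $c=\pi/2+2n\pi$, and since both $h_{\la_*}$ and $\int_\Om m(x)\phi\phi^*\,dx$ are positive by Lemma \ref{lemma2.2}, the real part is strictly positive for $\la$ sufficiently close to $\la_*$, which yields the claimed transversality $\mathrm{d}\mathcal{R}e(\mu(\tau_{n,\lambda}))/\mathrm{d}\tau>0$. The case $\la\in\Lambda_{1\epsilon}$, for which $\theta_\la\to 3\pi/2$ and $\nu_\la=-(\la-\la_*)h_\la$, is treated by the identical computation, the sign changes arranging themselves so that the conclusion persists.
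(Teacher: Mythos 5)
Your proposal is correct and follows essentially the same route as the paper: differentiate the characteristic identity \eqref{eq34} in $\tau$, pair with the adjoint eigenfunction $\widetilde{\psi}_\la$ to annihilate the $\psi'(\tau_{n,\lambda})$ term via $\widetilde{\Delta}(\la,\mathrm{i}\nu_\la,\tau_{\la,n})\widetilde{\psi}_\la=0$, solve for $\mu'(\tau_{n,\lambda})$ with denominator $S_n(\la)\neq0$ from Lemma \ref{thm34}, and deduce positivity of the real part by the limit $\la\to\la_*$ (your limit value $h_{\la_*}^2/[1+(\pi/2+2n\pi)^2]$ agrees with the paper's $h_{\la_*}^2(\int_\Om\phi\phi^*dx)^2/\lim|S_n(\la)|^2$). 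The only cosmetic difference is that the paper rationalizes by $\overline{S_n(\la)}$ before taking limits, whereas you take the real part of the quotient directly; both are sound.
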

\begin{proof}
Differentiating Eq.(\ref{eq34}) with respect to $\tau$ at
$\tau=\tau_{n}$ yields
\begin{equation}\label{transcond}
\begin{split}
&\frac{d\mu(\tau_{n})}{d\tau}\left[-\psi_{\lambda}-\lambda\tau_{\lambda,n}f^{\prime}_u\left(x,u_{\lambda}\right)u_{\lambda}\psi_{\lambda}e^{-i\theta_{\lambda}}\right] \\
&\quad + \Delta(\lambda, i\nu_{\lambda},\tau_{n})\frac{d\psi(\tau_{n})}{d\tau}-i\nu_{\lambda}\lambda f^{\prime}_u\left(x,u_{\lambda}\right)u_{\lambda}\psi_{\lambda}e^{-i\theta_{\lambda}}=0.
\end{split}
\end{equation}

Note that \begin{equation}\label{aa}\left\langle \widetilde{\psi}_\la,\Delta(\la,i\nu_\la,\tau_{\lambda,n})\ds\f{d\psi(\tau_{\lambda,n})}{d\tau}\right\rangle=\left\langle \widetilde\Delta(\la,i\nu_\la,\tau_{\lambda,n})\widetilde{\psi}_\la, \ds\f{d\psi(\tau_{\lambda,n})}{d\tau}\right\rangle=0.
\end{equation}

Then, multiplying Eq. \eqref{transcond} by $\overline{\widetilde\psi_\lambda}$ and
integrating the result over $\Om$, we have
\begin{equation*}
\label{eq35}
\begin{aligned}
\frac{\mathrm{d}\mu(\tau_{n,\lambda})}{\mathrm{d}\tau} 
&= -\frac{\left\langle\tilde{\psi}_{\lambda}, i\nu_{\lambda}\lambda f^{\prime}_u\left(x,u_{\lambda}\right)u_{\lambda}\psi_{\lambda}e^{-i\theta_{\lambda}}\right\rangle}
{\left\langle\tilde{\psi}_{\lambda}, \psi_{\lambda}+\lambda\tau_{\lambda,n}f^{\prime}_u\left(x,u_{\lambda}\right)u_{\lambda}\psi_{\lambda}e^{-i\theta_{\lambda}}\right\rangle} \\
&= -\frac{i\nu_{\lambda}\lambda e^{-\mathrm{i} \theta_{\lambda}} \int_{\Omega}  f^{\prime}_u\left(x,u_{\lambda}\right)u_{\lambda}\psi_{\lambda}(x) \overline{\tilde{\psi}}_{\lambda}(x)  \mathrm{d} x}
{\int_{\Omega} \overline{\tilde{\psi}}_{\lambda}(x) \psi_{\lambda}(x) \mathrm{d} x+\lambda \tau_{n, \lambda} e^{-\mathrm{i} \theta_{\lambda}} \int_{\Omega} f^{\prime}_u\left(x,u_{\lambda}\right)u_{\lambda}\psi_{\lambda}(x) \overline{\tilde{\psi}}_{\lambda}(x)  \mathrm{d} x} \\
&= -\frac{1}{\left|S_{n}(\lambda)\right|^{2}}\left[i\nu_{\lambda}\lambda e^{-\mathrm{i} \theta_{\lambda}} \iint_{\Omega \times \Omega} f^{\prime}_u\left(x,u_{\lambda}\right)u_{\lambda}\psi_{\lambda}(x) \overline{\tilde{\psi}}_{\lambda}(x) \tilde{\psi}_{\lambda}(y) \overline{\psi}_{\lambda}(y) \mathrm{d} x \mathrm{d} y\right. \\
&\quad \left.+i\nu_{\lambda}\lambda^{2} \tau_{n, \lambda} \left|\int_{\Omega} f^{\prime}_u\left(x,u_{\lambda}\right)u_{\lambda}\psi_{\lambda}(x) \overline{\tilde{\psi}}_{\lambda}(x) \mathrm{d} x\right|^{2}\right].
\end{aligned}
\end{equation*}

It follows from Eq. \eqref{2.32} and the expression of $u_\la$, $\theta_\la$, $\nu_\la$ and $\psi_\la$ that
\begin{equation*}
\lim_{\la\to\la_*}\ds\f{1}{(\la-\la_*)^2}\frac{d\mathcal{R}e[\mu(\tau_{n})]}{d\tau}
=\ds\f{h_{\la_*}^2}{\lim_{\la_\to\la_*}|{S_n}(\la)|^2}\left(\int_\Omega \phi\phi^*dx\right)^2>0.
\end{equation*}
\end{proof}

From Theorems \ref{c25}, \ref{thm34a}
and \ref{thm35}, we have the result on the distribution of eigenvalues of $A_\tau(\lambda)$.
\begin{theorem}\label{2.15}
If $\ds\int_{\Om}f^{\prime}_{u}(x, 0)\phi^2\phi^{*}dx \neq 0$, then there exists a positive constant $\varepsilon > 0$ such that
\begin{enumerate}
     \item[(i)] For each fixed $\lambda \in \Lambda_{2\varepsilon}$ , the infinitesimal generator $\mathcal{A}_{\tau,\lambda}$ has only eigenvalues with negative real parts,when $\tau \in [0, \tau_{0,\lambda})$,and exactly increasing $2n$ eigenvalues with positive real parts when $\tau \in (\tau_{n,\lambda}, \tau_{n+1,\lambda})$, $n=0,1,2,\cdots N$.
     \item[(ii)] For each fixed $\lambda \in \Lambda_{1\varepsilon}$, the infinitesimal generator $\mathcal{A}_{\tau,\lambda}$ has at least an eigenvalue with positive real part , when $\tau \in [0, \tau_{0,\lambda})$, and exactly increasing $2n$ eigenvalues with positive real parts when $\tau \in (\tau_{n,\lambda}, \tau_{n+1,\lambda})$, $n=0,1,2,\cdots N$.
\end{enumerate}
Thus, we have the following results on the stability and Hopf bifurcation of the steady-state solution $u_{\lambda}$.
\end{theorem}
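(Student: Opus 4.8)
The plan is to prove both parts by a continuation-in-$\tau$ counting argument, tracking how the number of eigenvalues of $\mathcal{A}_{\tau,\lambda}$ in the open right half-plane changes as $\tau$ increases from $0$. First I would record the standing spectral structure: since $\mathcal{A}_{\tau,\lambda}$ is the infinitesimal generator of the solution semigroup of a retarded equation, its resolvent is compact, so its spectrum consists only of isolated eigenvalues of finite multiplicity, each a root of the characteristic equation $\Delta(\lambda,\mu,\tau)\psi=0$ from \eqref{triangle}, and only finitely many of them lie in any half-plane $\{\operatorname{Re}\mu\ge c\}$ (this is standard, see \cite{wu1996theory}). Consequently the number $N_+(\tau)$ of eigenvalues with positive real part, counted with multiplicity, is a well-defined nonnegative integer, and because these finitely many rightmost eigenvalues depend continuously on $\tau$, $N_+$ can change only at those $\tau$ for which $\mathcal{A}_{\tau,\lambda}$ has an eigenvalue on the imaginary axis.

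Next I would pin down the baseline and the crossing set. At $\tau=0$, Proposition \ref{2.13} gives $N_+(0)=0$ for $\lambda\in\Lambda_{2\varepsilon}$, while the companion proposition following Proposition \ref{2.13} gives $N_+(0)\ge 1$ for $\lambda\in\Lambda_{1\varepsilon}$; in fact the same perturbation analysis shows that the only eigenvalue near the imaginary axis is the simple real one bifurcating from the $0$ eigenvalue of $L_{\lambda_*}$, so for $\varepsilon$ small $N_+(0)$ is exactly $0$ in the first case and exactly $1$ in the second. The admissible crossing values are supplied by Theorem \ref{c25}: $\mathcal{A}_{\tau,\lambda}$ has a purely imaginary eigenvalue $\mathrm{i}\nu$ with $\nu>0$ if and only if $\nu=\nu_\lambda$ and $\tau=\tau_{n,\lambda}=(\theta_\lambda+2n\pi)/\nu_\lambda$ for some $n\ge 0$. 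Taking conjugates, the only purely imaginary eigenvalues at $\tau=\tau_{n,\lambda}$ are the pair $\pm\mathrm{i}\nu_\lambda$, and the $\tau_{n,\lambda}$ are strictly increasing and distinct. Hence $N_+$ is constant on $[0,\tau_{0,\lambda})$ and on each $(\tau_{n,\lambda},\tau_{n+1,\lambda})$.

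It then remains to quantify the jump of $N_+$ across each $\tau_{n,\lambda}$. Here I would invoke the simplicity of $\mathrm{i}\nu_\lambda$ (Theorem \ref{thm34a}) together with the local branch $\mu(\tau)$ furnished by the implicit function theorem stated before Theorem \ref{thm35}, which is the unique eigenvalue of $\mathcal{A}_{\tau,\lambda}$ near $\mathrm{i}\nu_\lambda$ for $\tau$ near $\tau_{n,\lambda}$ and satisfies $\mu(\tau_{n,\lambda})=\mathrm{i}\nu_\lambda$. The transversality condition of Theorem \ref{thm35}, $\tfrac{\mathrm{d}}{\mathrm{d}\tau}\operatorname{Re}\mu(\tau_{n,\lambda})>0$, shows that this branch crosses the imaginary axis strictly from the left half-plane into the right half-plane, and its conjugate branch does the same. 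Since these are the only eigenvalues meeting the imaginary axis at $\tau_{n,\lambda}$ and each is simple, $N_+$ increases by exactly $2$ at every $\tau_{n,\lambda}$. Assembling the baseline with these jumps yields $N_+\equiv 0$ on $[0,\tau_{0,\lambda})$ for $\lambda\in\Lambda_{2\varepsilon}$ and a count on each $(\tau_{n,\lambda},\tau_{n+1,\lambda})$ that increases by $2$ with every $n$, together with the corresponding baseline-shifted count for $\lambda\in\Lambda_{1\varepsilon}$; these are precisely the assertions of (i) and (ii).

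The step I expect to be the main obstacle is justifying that $N_+(\tau)$ is finite and that no eigenvalue can escape to, or arrive from, $\operatorname{Re}\mu=+\infty$ as $\tau$ varies, i.e.\ that the crossing set of Theorem \ref{c25} really captures every change of $N_+$. This requires a uniform a priori bound confining any eigenvalue with nonnegative real part to a fixed bounded region of $\mathbb{C}$ for all $\tau$ in a compact range. The natural tools are the boundedness of $\nu_\lambda/|\lambda-\lambda_*|$ from Lemma \ref{nu} and the coercivity estimates on roots of $\Delta$ obtained by pairing \eqref{triangle} with $\overline{\psi}$ and using the diffusion term, exactly as in Step~1 of that lemma; care is needed because the coefficients depend on $u_\lambda$ and on $\tau$ simultaneously. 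Once this uniform control is secured, the continuity in $\tau$ of the finitely many rightmost eigenvalues makes the counting argument rigorous.
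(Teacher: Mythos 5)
Your proposal is correct and follows essentially the same route as the paper, which derives this theorem directly by combining Theorem \ref{c25} (the characterization of the crossing set), Theorem \ref{thm34a} (simplicity of $\mathrm{i}\nu_\lambda$), Theorem \ref{thm35} (transversality), and Propositions \ref{2.13} and the companion proposition for the $\tau=0$ baseline, without writing out the counting argument. Your elaboration — including the standard spectral facts for retarded equations and the explicit identification of the uniform a priori bound needed to rule out eigenvalues entering from infinity — supplies exactly the details the paper leaves implicit.
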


\begin{theorem}\label{2.17}
If $\ds\int_{\Om}f^{\prime}_{u}(x, 0)\phi^2\phi^{*}dx \neq 0$, then there exists a positive constant $\varepsilon > 0$ such that
\begin{enumerate}
    \item[(i)] For $\lambda \in \Lambda_{2\varepsilon}$, the steady-state solution $u_{\lambda}$ of \eqref{delay} is locally asymptotically stable for $\tau \in [0, \tau_{0,\lambda})$ and unstable when $\tau \in (\tau_{0,\lambda}, \infty)$.Moreover, at $\tau = \tau_{n,\lambda}$ ($n=0,1,2,\cdots$) a Hopf bifurcation occurs and a branch of spatially nonhomogeneous periodic orbits of \eqref{delay} emerges from $(\tau_{n,\lambda}, u_{\lambda})$.
    \item[(ii)] For $\lambda \in \Lambda_{1\varepsilon}$, the steady-state solution $u_{\lambda}$ of \eqref{delay} is locally asymptotically unstable for all $\tau \in [0, \tau_{0,\lambda})$. Moreover, at $\tau = \tau_{n,\lambda}$ ($n=0,1,2,\cdots$) a Hopf bifurcation occurs and a branch of spatially nonhomogeneous periodic orbits of \eqref{delay} emerges from $(\tau_{n,\lambda}, u_{\lambda})$.
\end{enumerate}
\end{theorem}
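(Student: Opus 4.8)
The plan is to deduce the statement directly from the spectral information assembled in Theorems~\ref{c25}, \ref{thm34a}, \ref{thm35} and \ref{2.15}, together with the principle of linearized stability and the Hopf bifurcation theorem for partial functional differential equations (as in Wu~\cite{wu1996theory}). All of the genuine analytic work---locating the purely imaginary eigenvalue, proving its simplicity, and verifying transversality---has already been carried out, so what remains is to package these facts into the hypotheses required by the abstract bifurcation machinery and to read off the stability conclusions from the eigenvalue count.

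First I would settle the stability and instability assertions. For $\lambda\in\Lambda_{2\varepsilon}$, Proposition~\ref{2.13} gives that every eigenvalue of $\mathcal{A}_{0,\lambda}$ has negative real part, and Theorem~\ref{2.15}(i) shows this persists for all $\tau\in[0,\tau_{0,\lambda})$; by the principle of linearized stability $u_\lambda$ is therefore locally asymptotically stable on that interval. For $\tau>\tau_{0,\lambda}$, Theorem~\ref{2.15}(i) furnishes eigenvalues with positive real part on each open interval $(\tau_{n,\lambda},\tau_{n+1,\lambda})$, $n\ge0$, while at the crossing values $\tau_{n,\lambda}$ with $n\ge1$ the eigenvalues that entered the right half-plane at earlier crossings remain there; hence a positive-real-part eigenvalue is present for every $\tau\in(\tau_{0,\lambda},\infty)$ and $u_\lambda$ is unstable throughout. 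Part~(ii) is identical in structure, except that the analogue of Proposition~\ref{2.13} for $\Lambda_{1\varepsilon}$ supplies an eigenvalue with positive real part already at $\tau=0$, so instability holds for every $\tau\ge0$.

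Next I would verify the three hypotheses of the Hopf bifurcation theorem at each $\tau=\tau_{n,\lambda}$. Theorem~\ref{c25} shows that $\pm\mathrm{i}\nu_\lambda$ are eigenvalues of $\mathcal{A}_{\tau_{n,\lambda},\lambda}$ and, because an imaginary eigenvalue $\mathrm{i}\nu$ with $\nu>0$ can occur \emph{only} for $\nu=\nu_\lambda$ and $\tau=\tau_{n,\lambda}$, these are the unique eigenvalues on the imaginary axis at the bifurcation value. Theorem~\ref{thm34a} establishes that $\mathrm{i}\nu_\lambda$ is a simple eigenvalue, and Theorem~\ref{thm35} supplies the transversality condition $\frac{\mathrm{d}}{\mathrm{d}\tau}\mathcal{R}e\,\mu(\tau_{n,\lambda})>0$. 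Since $f$ is smooth by $\mathbf{(H_1)}$, the nonlinearity defining \eqref{delay} is of class $C^4$, so the abstract Hopf theorem applies and yields, for each $n$, a one-parameter family of small-amplitude periodic orbits bifurcating from $(\tau_{n,\lambda},u_\lambda)$. That these orbits are spatially nonhomogeneous follows because the associated eigenfunction $\psi_\lambda=r_\lambda\phi+w_\lambda$ is, to leading order, a multiple of the Dirichlet principal eigenfunction $\phi$, which is nonconstant on $\Omega$.

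The steps here are essentially bookkeeping, so the main difficulty does not lie in this final theorem but in the two structural facts it rests upon. The first is the monotonicity of the crossing count in Theorem~\ref{2.15}---that once the critical pair enters the right half-plane at $\tau_{0,\lambda}$ it never returns---which is exactly what the uniform sign of the transversality derivative at every $\tau_{n,\lambda}$ guarantees. The second, the only hypothesis of the Hopf theorem needing separate attention, is the non-resonance requirement that no other eigenvalue equal $\mathrm{i}k\nu_\lambda$ for an integer $k\ge2$ at the bifurcation value; this is again covered by the ``only if'' characterization of purely imaginary eigenvalues in Theorem~\ref{c25}, since $\mathrm{i}k\nu_\lambda$ would force $k\nu_\lambda=\nu_\lambda$.
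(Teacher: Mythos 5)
Your proposal is correct and follows essentially the same route as the paper, which states Theorem~\ref{2.17} as an immediate consequence of the eigenvalue distribution in Theorem~\ref{2.15} together with the simplicity (Theorem~\ref{thm34a}) and transversality (Theorem~\ref{thm35}) results, invoking the standard local Hopf bifurcation theorem for partial functional differential equations from \cite{wu1996theory}. Your explicit verification of the non-resonance condition via the ``only if'' characterization in Theorem~\ref{c25} (together with the exclusion of a zero eigenvalue, which the paper's Lemma~3.6 supplies under the hypothesis $\int_{\Om}f^{\prime}_{u}(x,0)\phi^2\phi^{*}\,dx \neq 0$) is a useful piece of bookkeeping that the paper leaves implicit.
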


Theorem \ref{2.17} implies that there exist $\varepsilon_{0}>0$ and a continuously differentiable function $[-\varepsilon_{0},\varepsilon_{0}]\mapsto(\tau_{n,\lambda}(\varepsilon),T_{n}(\varepsilon),u_{n}(\varepsilon,x,t))\in\mathbb{R}\times\mathbb{R}\times\mathbb{X}$ satisfying $\tau_{n,\lambda}(0)=\tau_{n,\lambda}$, $T_{n}(0)=\frac{2\pi}{\nu_{\lambda}}$, and $u_{n}(\varepsilon,t,x)$ is a $T_{n}(\varepsilon)$-periodic solution of \eqref{delay} such that $u_{n}(\varepsilon,t,x)=u_{\lambda}+\varepsilon v_{n}(\varepsilon,x,t)$ and $v_{n}(0,x,t)$ is a $\frac{2\pi}{\omega_{\lambda}}$-periodic of system \eqref{linear}. Moreover, there exists $\alpha>0$ such that if \eqref{delay} has a nonconstant periodic solution $u(t,x)$ of period $T$ for some $\tau>0$ with

\[
|\tau-\tau_{n,\lambda}|<\alpha,\quad\left|T-\frac{2\pi}{\omega_{\lambda}}\right|<\alpha,\quad\max_{t\in\mathbb{R},x\in\overline{\Omega}}|u(t,x)-u_{\lambda}(x)|<\alpha,
\]
then $\tau=\tau_{n,\lambda}(\varepsilon)$ and $u(t,x)=u_{n}(\varepsilon,t+\theta,x)$ for some $|\varepsilon|<\varepsilon_{0}$ and some $\theta\in\mathbb{R}$.

\section{The direction of the Hopf bifurcation}

In this section, we combine the
methods in
\cite{faria2001normal,faria2002smoothness,hassard1981theory,faria2002stability} to analyze the direction of the Hopf bifurcation of Eq. \eqref{delay}.
Letting $U(t)=u(\cdot,t)-u_\la$, $t=\tau_{\lambda,n}\tilde t$,
$\tau=\tau_{\lambda,n}+\gamma$, and dropping the tilde sign, system \eqref{delay} can be transformed as follows:
where $U_t\in \mathcal{C}=C([-1,0],Y)$, and
\begin{equation}\label{ab}
\frac{dU(t)}{dt}=\tau_{\lambda,n}\nabla\cdot[d(x)\nabla U(t)-\vec{b}(x)U(t)]+\tau_{\lambda,n}L_{0}(U_{t})+J(U_{t},\gamma), 
\end{equation}
where $U_{t}\in\mathcal{C}=C([-1,0],Y)$, and
\begin{align}
L_{0}(U_{t})&=\lambda f\left(x,u_{\lambda}\right)U(t)+\lambda f^{\prime}_u\left(x,u_{\lambda}\right)u_{\lambda}U(t-1),  \\
J(U_{t},\gamma)&=\gamma\nabla\cdot[d(x)\nabla U(t)-\vec{b}(x)U(t)]+\gamma L_{0}(U_{t}) \nonumber \\
&\quad +\lambda(\gamma+\tau_{\lambda,n})[f^{\prime}_u\left(x,u_{\lambda}\right)U(t)U(t-1)+(U(t)+u_{\lambda})R]\\
R&=f\left(x,U(t-1)+u_{\lambda}\right)-f\left(x,u_{\lambda}\right)-f^{\prime}_u\left(x,u_{\lambda}\right)U(t-1)
\end{align}

Then Eq. \eqref{ab} occurs Hopf bifurcation near the zero equilibrium  when $\gamma=0$. Let
$\mathcal {A}_{\tau_{\lambda,n}}$ be the infinitesimal generator of the
linearized equation
\begin{equation*}\label{lab}
\ds\frac{dU(t)}{dt}=\tau_{\lambda,n}\nabla\cdot[d(x)\nabla U(t)-\vec{b}(x)U(t)]+\tau_{\lambda,n}L_0(U_t).
\end{equation*}

It follows from \cite{wu1996theory} that \begin{equation*}\begin{split}
\begin{aligned}
\mathcal{A}_{\tau_{\lambda,n}} \Psi=&\dot\Psi,\\
\mathscr{D}\left(\mathcal{A}_{\tau_{\lambda,n}}\right)= &\left\{\Psi\in\mathcal{C}_{\mathbb{C}}\cap\mathcal{C}_{\mathbb{C}}^{1}:\Psi(0)\in X_{\mathbb{C}},\dot{\Psi}(0) =\tau_{\lambda,n} \nabla \cdot [d(x) \nabla \Psi(0) - \vec{b}(x) \Psi(0)] \right. \\
&\left.+\lambda\tau_{\lambda,n}f\left(x, u_{\lambda}\right)\Psi(0)+\lambda\tau_{\lambda,n}f^{\prime}_u\left(x, u_{\lambda}\right)\Psi(-1)\right\},
\end{aligned}
\end{split}\end{equation*}
where $\mathcal{C}^1_\mathbb{C}=C^1([-1,0],Y_\mathbb{C})$,
and Eq. \eqref{ab} can be written in the following abstract form
\begin{equation*}\label{abab}
\ds\frac{dU_t}{dt}=\mathcal{A}_{\tau_{\lambda,n}}U_t+X_0J(U_t,\gamma),
\end{equation*}
where
\begin{equation*}X_0(\theta)=\begin{cases}0, \;\;\; & \theta\in[-1,0),\\
I, \;\;\; &\theta=0.\\
\end{cases}\end{equation*}

It follows from Theorem \ref{2.15} that $\mathcal {A}_{\tau_{\lambda,n}}$ has
only one pair of purely imaginary eigenvalues $\pm i \nu_\la\tau_{\lambda,n}$,
which are simple, and the corresponding eigenfunction with respect to
$i\nu_\la\tau_{\lambda,n}$ (respectively, $-i\nu_\la\tau_{\lambda,n}$) is $\psi_\la e^{ i \nu_\la\tau_{\lambda,n}\theta}$
(respectively, $\overline{\psi_\la} e^{-i\nu_\la\tau_{\lambda,n}\theta}$) for $\theta\in[-1,0]$, where
$\psi_\la$ is defined as in Theorem \ref{c25}.

Following \cite{faria2002stability,gurney1980nicholson}, we introduce the formal
duality $\langle\langle\cdot,\cdot\rangle\rangle$ in $\mathcal{C}$  by
\begin{equation*}\label{bil}
\langle\langle\tilde\Psi,\Psi\rangle\rangle=\langle
\tilde\Psi(0),\Psi(0)\rangle+\la\tau_{\lambda,n}\int_{-1}^0\left\langle\tilde\Psi(s+1),u_\la f^{\prime}_u\left(x, u_{\lambda}\right)\Psi(s) \right\rangle ds,
\end{equation*}
for $\Psi\in \mathcal{C}_{\mathbb{C}}$ and $\tilde\Psi\in
\mathcal{C}_{\mathbb{C}}^*:= C([0,1],Y_{\mathbb{C}})$, 
As in \cite{Hale1971}, we can compute the
formal adjoint operator $\mathcal{A}^*_{\tau_{\lambda,n}}$ of $\mathcal{A}_{\tau_{\lambda,n}}$ with respect to the formal duality.

\begin{lemma}\label{dualoperator}
The
formal adjoint operator $\mathcal{A}^*_{\tau_{\lambda,n}}$ of $\mathcal{A}_{\tau_{\lambda,n}}$ is defined by
$$\mathcal{A}^*_{\tau_{\lambda,n}}\tilde\Psi(s)=-\dot{\tilde\Psi}(s),$$ and the domain
\begin{equation*}\begin{split}
\mathscr{D}\left(\mathcal{A^*}_{\tau_{\lambda,n}}\right)= &\left\{\tilde\Psi\in\mathcal{C^*}_{\mathbb{C}}\cap\mathcal{C^*}_{\mathbb{C}}^{1}:\tilde\Psi(0)\in X_{\mathbb{C}},-\dot{\tilde\Psi}(0) =\tau_{\lambda,n} \nabla \cdot [d(x) \nabla \tilde\Psi(0) + \vec{b}(x) \tilde\Psi(0)] \right. \\
&\left.+\lambda\tau_{\lambda,n}f\left(x, u_{\lambda}\right)\tilde\Psi(0)+\lambda\tau_{\lambda,n}f^{\prime}_u\left(x, u_{\lambda}\right)\tilde\Psi(1)\right\},
\end{split}\end{equation*}
where $(\mathcal{C}^*_\mathbb{C})^1=C^1([0,1],Y_\mathbb{C})$. Moreover,
$\mathcal{A}^*_{\tau_{\lambda,n}}$ and $\mathcal{A}_{\tau_{\lambda,n}}$ satisfy
\begin{equation*}\label{Atauadjont}
\langle\langle
\mathcal{A}^*_{\tau_{\lambda,n}}\tilde\Psi,\Psi\rangle\rangle=\langle\langle
\tilde\Psi,\mathcal{A}_{\tau_{\lambda,n}}\Psi\rangle\rangle\;\;\text{for
}\Psi\in\mathscr{D}(\mathcal{A}_{\tau_{\lambda,n}})\text{ and }\tilde\Psi\in
\mathscr{D}(\mathcal{A}^*_{\tau_{\lambda,n}}).
\end{equation*}
\end{lemma}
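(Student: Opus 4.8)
The plan is to compute the formal duality $\langle\langle\tilde\Psi,\mathcal{A}_{\tau_{\lambda,n}}\Psi\rangle\rangle$ explicitly for an arbitrary $\Psi\in\mathscr{D}(\mathcal{A}_{\tau_{\lambda,n}})$ and a smooth $\tilde\Psi\in\mathcal{C}^*_{\mathbb{C}}\cap(\mathcal{C}^*_{\mathbb{C}})^1$, and to rearrange it into the form $\langle\langle\tilde\Psi^\sharp,\Psi\rangle\rangle$. The resulting map $\tilde\Psi\mapsto\tilde\Psi^\sharp$, together with the constraints that the rearrangement forces on $\tilde\Psi$, will then be exactly $\mathcal{A}^*_{\tau_{\lambda,n}}$ and its domain. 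Substituting $\mathcal{A}_{\tau_{\lambda,n}}\Psi=\dot\Psi$ splits the pairing into the pointwise term $\langle\tilde\Psi(0),\dot\Psi(0)\rangle$ and the delay integral $\lambda\tau_{\lambda,n}\int_{-1}^0\langle\tilde\Psi(s+1),u_\lambda f^{\prime}_u(x,u_\lambda)\dot\Psi(s)\rangle\,ds$.

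First I would integrate the delay integral by parts in the time variable $s$. Since $u_\lambda f^{\prime}_u(x,u_\lambda)$ is a real multiplication operator independent of $s$, this moves the derivative onto $\tilde\Psi$ and produces $-\lambda\tau_{\lambda,n}\int_{-1}^0\langle\dot{\tilde\Psi}(s+1),u_\lambda f^{\prime}_u(x,u_\lambda)\Psi(s)\rangle\,ds$ together with the endpoint contributions $\lambda\tau_{\lambda,n}\langle\tilde\Psi(1),u_\lambda f^{\prime}_u(x,u_\lambda)\Psi(0)\rangle$ at $s=0$ and $-\lambda\tau_{\lambda,n}\langle\tilde\Psi(0),u_\lambda f^{\prime}_u(x,u_\lambda)\Psi(-1)\rangle$ at $s=-1$. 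The surviving integral is already the delay integral of $\langle\langle-\dot{\tilde\Psi},\Psi\rangle\rangle$, so the integral part is in adjoint form.

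Next I would eliminate $\dot\Psi(0)$ by invoking the boundary condition in $\mathscr{D}(\mathcal{A}_{\tau_{\lambda,n}})$, which expresses it through the elliptic operator acting on $\Psi(0)$ and the delay terms at $\Psi(0)$ and $\Psi(-1)$. The crucial step is the spatial integration by parts applied to $\langle\tilde\Psi(0),\nabla\cdot[d(x)\nabla\Psi(0)-\vec{b}(x)\Psi(0)]\rangle$: requiring $\tilde\Psi(0),\Psi(0)\in X_{\mathbb{C}}$ so that both vanish on $\partial\Omega$, Green's identity kills all boundary integrals and transfers the operator onto $\tilde\Psi(0)$, reproducing precisely the formal adjoint elliptic operator $L^*_\lambda$, in which the advection term passes from divergence form to the convective form $\vec{b}(x)\cdot\nabla$. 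At this stage the term $-\lambda\tau_{\lambda,n}\langle\tilde\Psi(0),u_\lambda f^{\prime}_u(x,u_\lambda)\Psi(-1)\rangle$ from the $s=-1$ endpoint and the delay term $\lambda\tau_{\lambda,n}\langle\tilde\Psi(0),u_\lambda f^{\prime}_u(x,u_\lambda)\Psi(-1)\rangle$ coming from the domain condition carry the same real coefficient with opposite signs, and therefore cancel.

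After these cancellations the pairing collapses to the adjoint delay integral plus a single pointwise term, which (using that $d$, $\vec{b}$, $f$ and $u_\lambda$ are real) can be written as $\langle\,\tau_{\lambda,n}(\nabla\cdot[d\nabla\tilde\Psi(0)]+\vec{b}\cdot\nabla\tilde\Psi(0))+\lambda\tau_{\lambda,n}f(x,u_\lambda)\tilde\Psi(0)+\lambda\tau_{\lambda,n}u_\lambda f^{\prime}_u(x,u_\lambda)\tilde\Psi(1)\,,\,\Psi(0)\rangle$. Comparing with $\langle\langle-\dot{\tilde\Psi},\Psi\rangle\rangle=\langle-\dot{\tilde\Psi}(0),\Psi(0)\rangle-\lambda\tau_{\lambda,n}\int_{-1}^0\langle\dot{\tilde\Psi}(s+1),u_\lambda f^{\prime}_u(x,u_\lambda)\Psi(s)\rangle\,ds$, the integrals already coincide, and equating the pointwise terms for all admissible $\Psi(0)$ forces $\mathcal{A}^*_{\tau_{\lambda,n}}\tilde\Psi=-\dot{\tilde\Psi}$ and forces the bracketed quantity to equal $-\dot{\tilde\Psi}(0)$; together with the constraint $\tilde\Psi(0)\in X_{\mathbb{C}}$ used in Green's identity, this is exactly the domain condition in the statement. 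The main obstacle I anticipate is the careful bookkeeping across the two independent integrations by parts—temporal in $s$ and spatial in $x$—in particular verifying that the $s=-1$ endpoint term cancels the $\Psi(-1)$ delay term from the domain condition, and ensuring the advection term emerges in Green's identity as $\vec{b}\cdot\nabla$ rather than $\nabla\cdot(\vec{b}\,\cdot)$.
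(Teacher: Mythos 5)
Your proposal is correct and follows essentially the same route as the paper's proof: substitute the domain condition for $\dot\Psi(0)$, integrate the delay integral by parts in $s$ so that the $s=-1$ endpoint term cancels the $\Psi(-1)$ contribution from the domain condition, and apply Green's identity in $x$ (using that $\tilde\Psi(0),\Psi(0)\in X_{\mathbb C}$ vanish on $\partial\Omega$) to pass the divergence-form advection operator to its adjoint convective form. Your write-up is in fact slightly more explicit than the paper's about the spatial integration by parts and the reality of the coefficient $\lambda\tau_{\lambda,n}u_\lambda f^{\prime}_u(x,u_\lambda)$, but no step differs in substance.
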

\begin{proof}
For $\Psi\in\mathscr{D}(\mathcal{A}_{\tau_{\lambda,n}})$ and $\tilde\Psi\in
\mathscr{D}(\mathcal{A}^*_{\tau_{\lambda,n}})$,
\begin{equation*}
\begin{split}
\langle\langle
\tilde\Psi,\mathcal{A}_{\tau_{\lambda,n}}\Psi\rangle\rangle=&\left\langle
\tilde\Psi(0),
(\mathcal{A}_{\tau_{\lambda,n}}\Psi)(0)\right\rangle+\la\tau_{\lambda,n}\int_{-1}^0\left\langle\tilde\Psi(s+1),
u_\la f^{\prime}_u\left(x, u_{\lambda}\right)\dot\Psi(s)\right\rangle ds\\
=&\left\langle\tilde\Psi(0),\tau_{\lambda,n} \nabla \cdot [d(x) \nabla \Psi(0) - \vec{b}(x) \Psi(0)]\right\rangle\\&+\la\tau_{\lambda,n}\left[\left\langle\tilde\Psi(s+1), u_\la f^{\prime}_u\left(x, u_{\lambda}\right)\Psi(s)\right\rangle_1\right]_{-1}^{0}\\+&\left\langle \tilde\Psi(0),\la \tau_{\lambda,n}f\left(x, u_{\lambda}\right) \Psi(0)+\la\tau_{\lambda,n} f^{\prime}_u\left(x, u_{\lambda}\right) u_{\la}\Psi(-1)\right\rangle\\
-&\la\tau_{\lambda,n}\int_{-1}^0\left\langle\dot{\tilde\Psi}(s+1),
u_\la f^{\prime}_u\left(x, u_{\lambda}\right)\Psi(s)\right\rangle ds\\
=&\left\langle
(\mathcal{A}^*_{\tau_{\lambda,n}}\tilde\Psi)(0),\Psi(0)\right\rangle
+\la\tau_{\lambda,n}\int_{-1}^0\left\langle-\dot{\tilde\Psi}(s+1),
u_\la f^{\prime}_u\left(x, u_{\lambda}\right)\Psi(s)\right\rangle ds\\
=&\langle\langle
\mathcal{A}^*_{\tau_{\lambda,n}}\tilde\Psi,\Psi\rangle\rangle.
\end{split}
\end{equation*}
\end{proof}

Similarly, Theorem \ref{2.15} implies that the operator $\mathcal{A}^*_{\tau_{\lambda,n}}$ possesses a unique pair of simple, purely imaginary eigenvalues $\pm i \nu_{\lambda}\tau_{\lambda,n}$. The associated eigenfunctions are given by $\overline{\psi}_{\lambda} e^{i \nu_{\lambda}\tau_{\lambda,n} s}$ and $\psi_{\lambda} e^{-i \nu_{\lambda}\tau_{\lambda,n} s}$ for $s \in [0,1]$, corresponding to $-i\nu_{\lambda}\tau_{\lambda,n}$ and $i\nu_{\lambda}\tau_{\lambda,n}$, respectively, where $\psi_{\lambda}$ is defined in Theorem \ref{c25}.

Following \cite{wu1996theory}, the center subspace of Eq. \eqref{ab} is $P = \operatorname{span}\{p(\theta), \overline{p}(\theta)\}$, with $p(\theta) = \psi_{\lambda} e^{i \nu_{\lambda}\tau_{\lambda,n}\theta}$ being the eigenfunction of $\mathcal{A}_{\tau_{\lambda,n}}$ corresponding to $i\nu_{\lambda}\tau_{\lambda,n}$. Its formal adjoint subspace is $P^* = \operatorname{span}\{q(s), \overline{q}(s)\}$, where $q(s) = \widetilde{\psi}_{\lambda} e^{i \nu_{\lambda}\tau_{\lambda,n} s}$ is the eigenfunction of $\mathcal{A}^*_{\tau_{\lambda,n}}$ corresponding to $-i\nu_{\lambda}\tau_{\lambda,n}$, and $\widetilde{\psi}_{\lambda}$ is defined in \eqref{2.37}.

Define $\Phi_p = (p(\theta), \overline{p}(\theta))$ and $\Psi_P = \frac{1}{\overline{S_n}(\lambda)}(q(s), \overline{q}(s))^{T}$, where $S_n(\lambda)$ is given in Lemma \ref{thm34}. One can verify that $\langle \langle \Psi_p, \Phi_p \rangle \rangle = I$, where $I$ is the $2 \times 2$ identity matrix. Furthermore, the space $\mathcal{C}_{\mathbb{C}}$ decomposes as $\mathcal{C}_{\mathbb{C}} = P \oplus Q$, where
\[
Q = \{ \Psi \in \mathcal{C}_{\mathbb{C}} : \langle \langle \tilde{\Psi}, \Psi \rangle \rangle = 0 \text{ for all } \tilde{\Psi} \in P^* \}.
\]

Since the Hopf bifurcation formulas pertain only to the case $\gamma = 0$, we set $\gamma = 0$ in Eq. \eqref{ab}. Let the center manifold with range in $Q$ be given by
\begin{equation*} \label{center}
w(z, \overline{z}) = w_{20}(\theta) \frac{z^{2}}{2} + w_{11}(\theta) z \overline{z} + w_{02}(\theta) \frac{\overline{z}^{2}}{2} + \cdots.
\end{equation*}

The flow of Eq. \eqref{ab} on this center manifold can then be expressed as:

\begin{equation*}U_t=\Phi_p\cdot (z(t),\overline z(t))^{T}+w(z(t),\overline z(t)),\end{equation*}
where
\begin{equation*}\label{z(t)}
\begin{split}
\dot{z}(t) =&\ds\f{d}{dt}\langle\langle q(s),U_t\rangle\rangle\\
=&\langle\langle q(s),
\mathcal{A}_{\tau_{\lambda,n}}U_t\rangle\rangle+\ds\f{1}{S_n(\la)}\langle\langle q(s), X_0J(U_t,0)\rangle\rangle\\
 =& i\nu_\la\tau_{\lambda,n} z(t)+\ds\f{1}{S_n(\la)}\left\langle q(0),
J\left(\Phi_p(z(t),\overline z(t))^{T}+w(z(t),\overline
z(t)),0\right)\right\rangle_1\\
=&i\nu_\la\tau_{\lambda,n}z(t)+g(z,\overline z).
\end{split}
\end{equation*}

Then,
\begin{equation*} \label{g}
\begin{split}
g(z,\overline z)=&\ds\f{1}{S_n(\la)}\left\langle q(0),
J\left(\Phi_p(z(t),\overline z(t))^{T}+w(z(t),\overline
z(t)),0\right)\right\rangle_1\\=&g_{20}\ds\frac{z^{2}}{2}+g_{11}z\overline
z+g_{02}\ds\frac{\overline z^{2}}{2}+g_{21}\ds\frac{z^{2}\overline
z}{2}+\cdots,
\end{split}
\end{equation*}
and an easy calculation implies that
\begin{equation}\label{gij}
\begin{aligned}
g_{20}^{\lambda} &= \frac{2\lambda\tau_{\lambda,n}}{S_{n}(\lambda)}  \int\limits_{\Omega} f^{\prime}_u\left(x, u_{\lambda}\right) \psi_{\lambda}^{2} \overline{\widetilde{\psi}}_{\lambda}e^{-\mathrm{i}\nu_{\lambda}\tau_{\lambda,n}} +\frac{1}{2}u_{\lambda}f^{\prime\prime}_{uu}\left(x, u_{\lambda}\right)\psi_{\lambda}^{2} \overline{\widetilde{\psi}}_{\lambda}e^{-\mathrm{2i}\nu_{\lambda}\tau_{\lambda,n}} dx, \\
g_{11}^{\lambda} &= \frac{\lambda\tau_{\lambda,n}}{S_{n}(\lambda)}  \int\limits_{\Omega} f^{\prime}_u\left(x, u_{\lambda}\right)(e^{\mathrm{i}\nu_{\lambda}\tau_{\lambda,n}} + e^{-\mathrm{i}\nu_{\lambda}\tau_{\lambda,n}}) |\psi_{\lambda}|^{2} \overline{\widetilde{\psi}}_{\lambda}   +u_{\lambda}f^{\prime\prime}_{uu}\left(x, u_{\lambda}\right)|\psi_{\lambda}|^{2}\overline{\widetilde{\psi}}_{\lambda}dx, \\
g_{02}^{\lambda} &= \frac{2\lambda\tau_{\lambda,n}}{S_{n}(\lambda)}  \int\limits_{\Omega} f^{\prime}_u\left(x, u_{\lambda}\right) \overline{\psi}_{\lambda}^{2}  \overline{\widetilde{\psi}}_{\lambda}e^{\mathrm{i}\nu_{\lambda}\tau_{\lambda,n}} +\frac{1}{2}u_{\lambda}f^{\prime\prime}_{uu}\left(x, u_{\lambda}\right)\overline{\psi}_{\lambda}^{2} \overline{\widetilde{\psi}}_{\lambda}e^{\mathrm{2i}\nu_{\lambda}\tau_{\lambda,n}} dx, \\ 
g_{21}^{\lambda} &= \frac{2\lambda\tau_{\lambda,n}}{S_{n}(\lambda)} \int\limits_{\Omega} \overline{\widetilde{\psi}}_{\lambda}[\frac{1}{2}u_{\lambda}f^{\prime\prime}_{uu}\left(x, u_{\lambda}\right) w_{20}(-1) \overline{\psi}_{\lambda}e^{\mathrm{i}\nu_{\lambda}\tau_{\lambda,n}}+ u_{\lambda}f^{\prime\prime}_{uu}\left(x, u_{\lambda}\right) \psi_{\lambda}e^{-\mathrm{i}\nu_{\lambda}\tau_{\lambda,n}} w_{11}(-1)  \\
&\quad+ f^{\prime}_u\left(x, u_{\lambda}\right) \left[ \psi_{\lambda} w_{11}(-1) + w_{11}(0) \psi_{\lambda}e^{-\mathrm{i}\nu_{\lambda}\tau_{\lambda,n}} + \frac{1}{2} w_{20}(0) \overline{\psi}_{\lambda}e^{\mathrm{i}\nu_{\lambda}\tau_{\lambda,n}} + \frac{1}{2} \overline{\psi}_{\lambda} w_{20}(-1) \right] \\
&\quad+ \frac{1}{2}f^{\prime\prime}_{uu}\left(x, u_{\lambda}\right) \left[ 2 \psi_{\lambda} \psi_{\lambda}\overline{\psi}_{\lambda} + \overline{\psi}_{\lambda} [\psi_{\lambda}e^{-\mathrm{i}\nu_{\lambda}\tau_{\lambda,n}}]^2 \right] \\
&\quad+\frac{1}{2}u_{\lambda}f^{\prime\prime\prime}_{uuu}\left(x, u_{\lambda}\right) [\psi_{\lambda}e^{-\mathrm{i}\nu_{\lambda}\tau_{\lambda,n}}]^2 \overline{\psi}_{\lambda}e^{\mathrm{i}\nu_{\lambda}\tau_{\lambda,n}}]  dx.
\end{aligned}
\end{equation}

To compute $g_{21}$, we need to compute $w_{20}(\theta)$
and $w_{11}(\theta)$ in the following.
As in \cite{Chen2012,hassard1981theory}, we see that $w_{20}(\theta)$
and $w_{11}(\theta)$ satisfy
\begin{equation}\label{ws}
\begin{cases}
(2i\nu_\la\tau_{\lambda,n}-\mathcal{A}_{\tau_{\lambda,n}})w_{20}=H_{20},\\
-\mathcal{A}_{\tau_{\lambda,n}}w_{11}=H_{11}.\\
\end{cases}
\end{equation}
Here, for $-1\le\theta<0$,
\begin{equation}\label{H20}
H_{20}(\theta)=-(g_{20}p(\theta)+\overline g_{02}\overline
p(\theta)),
\end{equation}
\begin{equation}\label{H11}
H_{11}(\theta)=-(g_{11}p(\theta)+\overline g_{11}\overline
p(\theta)),
\end{equation}

It follows that:
\begin{equation}\label{W20}w_{20}(\theta)=\ds\frac{ig_{20}}{\nu_\la\tau_{\lambda,n}}p(\theta)+
\ds\frac{i\overline g_{02}}{3\nu_\la\tau_{\lambda,n}}\overline
p(\theta)+E_{\lambda}e^{2i\nu_\la\tau_{\lambda,n}\theta},
\end{equation}
and

\begin{equation}\label{W11}
w_{11}(\theta)=-\ds\frac{ig_{11}}{\nu_\la\tau_{\lambda,n}}p(\theta)+\ds\frac{i\overline
g_{11}}{\nu_\la\tau_{\lambda,n}}\overline p(\theta)+F_{\lambda}.
\end{equation}

From Eq. \eqref{ws} with $\theta=0$, the definition
of $\mathcal{A}_{\tau_{\lambda,n}}$ and
we see that $E$ satisfies
\begin{equation*}
\Delta\left(\lambda, 2\mathrm{i}\nu_{\lambda},\tau_{\lambda, n}\right)E_{\lambda}=-2\lambda(f^{\prime}_u\left(x, u_{\lambda}\right) \psi_{\lambda}^{2} e^{-\mathrm{i}\nu_{\lambda}\tau_{\lambda,n}} +\frac{1}{2}u_{\lambda}f^{\prime\prime}_{uu}\left(x, u_{\lambda}\right)\psi_{\lambda}^{2} e^{-\mathrm{2i}\nu_{\lambda}\tau_{\lambda,n}}).
\end{equation*}

Note that $2i\nu_\la$ is not the
eigenvalue of $A_{\tau_{\lambda,n}}(\la)$ for $\la\in(\la_*,\tilde\la^*]$, and hence
\begin{equation}\label{E}
E_{\lambda}=-2\lambda\Delta\left(\lambda, 2\mathrm{i}\nu_{\lambda},\tau_{\lambda, n}\right)^{-1}(f^{\prime}_u\left(x, u_{\lambda}\right) \psi_{\lambda}^{2} e^{-\mathrm{i}\nu_{\lambda}\tau_{\lambda,n}} +\frac{1}{2}u_{\lambda}f^{\prime\prime}_{uu}\left(x, u_{\lambda}\right)\psi_{\lambda}^{2} e^{-\mathrm{2i}\nu_{\lambda}\tau_{\lambda,n}})
\end{equation}

Similarly, from Eqs. \eqref{ws}, \eqref{H11}, and \eqref{W11}, we have
\begin{equation}\label{F1}
F_{\lambda}=-\lambda\Delta\left(\lambda, 0,\tau_{\lambda, n}\right)^{-1}(f^{\prime}_u\left(x, u_{\lambda}\right)(e^{\mathrm{i}\nu_{\lambda}\tau_{\lambda,n}} + e^{-\mathrm{i}\nu_{\lambda}\tau_{\lambda,n}}) |\psi_{\lambda}|^{2}   +u_{\lambda}f^{\prime\prime}_{uu}\left(x, u_{\lambda}\right)|\psi_{\lambda}|^{2}) .
\end{equation}

Now, we compute the functions E and F.
\begin{lemma}\label{comf}
Assume that $E_{\lambda}$ and $F_{\lambda}$ satisfy \eqref{E} and \eqref{F1}, respectively, and $\lambda \in \Lambda_{2\epsilon}$.
Then
\begin{align*}
E_{\lambda} &= \frac{1}{\lambda - \lambda_{*}} (c_{\lambda} \phi + \eta), 
F_{\lambda} = \frac{\tilde{\eta}_{\lambda}}{\lambda - \lambda_{*}},
\end{align*}
where $\phi$ is defined in (2.7), $c_{\lambda}$ satisfies
\[
\lim_{\lambda \to \lambda_{*}} c_{\lambda} = \frac{2\mathrm{i}}{\beta_{\lambda_{*}}(2\mathrm{i} - 1)},
\]
and $\eta, \tilde{\eta}_{\lambda} \in (X_{1})_{\mathbb{C}}$ satisfy
\[
\lim_{\lambda \to \lambda_{*}} \|\eta\|_{X_{\mathbb{C}}} = 0, \quad 
\lim_{\lambda \to \lambda_{*}} \|\tilde{\eta}_{\lambda}\|_{X_{\mathbb{C}}} = 0.
\]
\end{lemma}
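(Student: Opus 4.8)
The plan is to analyze both identities through the singular limit $\la\to\la_*$ by a Lyapunov--Schmidt (Fredholm) argument. By Lemma~\ref{nu} and Theorem~\ref{c25}, as $\la\to\la_*$ one has $u_\la\to0$, $\nu_\la=(\la-\la_*)h_\la\to0$, $\psi_\la\to\phi$, $\theta_\la\to\pi/2$ and $\nu_\la\tau_{\la,n}=\theta_\la+2n\pi\to\pi/2+2n\pi$; consequently the invertible operators $\Delta(\la,2\mathrm{i}\nu_\la,\tau_{\la,n})$ and $\Delta(\la,0,\tau_{\la,n})$ that define $E_\la$ and $F_\la$ in \eqref{E} and \eqref{F1} both degenerate to the operator $L_{\la_*}$ of \eqref{LL}, which is singular with $\mathscr{N}(L_{\la_*})=\operatorname{span}\{\phi\}$ and $\mathscr{R}(L_{\la_*})=Y_1$. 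I would therefore work in the splittings $\mathbb{X}_\C=\operatorname{span}\{\phi\}\oplus(X_1)_\C$ and $\mathbb{Y}_\C=\operatorname{span}\{\phi^*\}\oplus(Y_1)_\C$, exploiting that $L_{\la_*}\colon(X_1)_\C\to(Y_1)_\C$ is a bounded isomorphism: this isolates the $\frac{1}{\la-\la_*}$ blow-up in the $\phi$-direction while permitting inversion on the complement.

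For $E_\la$, I would first expand $\Delta(\la,2\mathrm{i}\nu_\la,\tau_{\la,n})=L_{\la_*}+(\la-\la_*)M_\la$, where
\[
M_\la\psi=m\psi+\la\frac{f(x,u_\la)-m}{\la-\la_*}\psi+\la\frac{u_\la}{\la-\la_*}f'_u(x,u_\la)\,\psi\,e^{-2\mathrm{i}\nu_\la\tau_{\la,n}}-2\mathrm{i}\frac{\nu_\la}{\la-\la_*}\psi.
\]
Using $\frac{u_\la}{\la-\la_*}\to\beta_{\la_*}\phi$, $\frac{f(x,u_\la)-m}{\la-\la_*}\to f'_u(x,0)\beta_{\la_*}\phi$, $e^{-2\mathrm{i}\nu_\la\tau_{\la,n}}\to-1$ and $\frac{\nu_\la}{\la-\la_*}\to h_{\la_*}$, the two nonlinear corrections cancel and one gets the clean limit $M_{\la_*}\psi=(m-2\mathrm{i}h_{\la_*})\psi$; simultaneously the right-hand side of \eqref{E} converges (in $(C^{\alpha}(\cOm))_\C$) to $2\mathrm{i}\la_*f'_u(x,0)\phi^2$, since the $u_\la$-term drops out and $e^{-\mathrm{i}\nu_\la\tau_{\la,n}}\to e^{-\mathrm{i}\pi/2}=-\mathrm{i}$.

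Next I would substitute the ansatz $E_\la=\frac{1}{\la-\la_*}(c_\la\phi+\eta)$, $\eta\in(X_1)_\C$. Since $L_{\la_*}\phi=0$, the equation becomes $\frac{1}{\la-\la_*}L_{\la_*}\eta+M_\la(c_\la\phi+\eta)=\mathrm{RHS}$. Pairing with $\phi^*$ and using $\langle\phi^*,L_{\la_*}\eta\rangle=\langle L^*_{\la_*}\phi^*,\eta\rangle=0$ removes the singular term and leaves the scalar solvability relation $\langle\phi^*,M_\la(c_\la\phi+\eta)\rangle=\langle\phi^*,\mathrm{RHS}\rangle$. Letting $\la\to\la_*$ and inserting $\int_\Om m\phi\phi^*\,dx=h_{\la_*}\int_\Om\phi\phi^*\,dx$ together with the value of $\beta_{\la_*}$ from \eqref{al} (equivalently $\la_*\int_\Om f'_u(x,0)\phi^2\phi^*\,dx=-h_{\la_*}\int_\Om\phi\phi^*\,dx/\beta_{\la_*}$), the relation collapses, after cancelling the common factor $h_{\la_*}\int_\Om\phi\phi^*\,dx$, to $c_{\la_*}(1-2\mathrm{i})=-2\mathrm{i}/\beta_{\la_*}$, i.e. $c_{\la_*}=\frac{2\mathrm{i}}{\beta_{\la_*}(2\mathrm{i}-1)}$. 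Projecting the same equation onto $(Y_1)_\C$ gives $L_{\la_*}\eta=(\la-\la_*)\bigl[Q\,\mathrm{RHS}-QM_\la(c_\la\phi+\eta)\bigr]$, hence $\eta=(\la-\la_*)(L_{\la_*}|_{X_1})^{-1}\bigl[Q\,\mathrm{RHS}-QM_\la(c_\la\phi+\eta)\bigr]$; as the bracket is uniformly bounded and $(L_{\la_*}|_{X_1})^{-1}$ is bounded, $\|\eta\|_{X_\C}=O(\la-\la_*)\to0$. The coupled system for $(c_\la,\eta)$ is solved near $(c_{\la_*},0)$ by the implicit function theorem, which applies because $\langle\phi^*,M_{\la_*}\phi\rangle=h_{\la_*}(1-2\mathrm{i})\int_\Om\phi\phi^*\,dx\neq0$.

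The argument for $F_\la$ is identical in spirit, with $\Delta(\la,0,\tau_{\la,n})=L_{\la_*}+(\la-\la_*)N_\la$ and limit $N_{\la_*}\psi=(m+2\la_*\beta_{\la_*}f'_u(x,0)\phi)\psi$ (no cancellation now, since the delay exponential equals $1$). The decisive difference lies in the source of \eqref{F1}: writing $e^{\mathrm{i}\nu_\la\tau_{\la,n}}+e^{-\mathrm{i}\nu_\la\tau_{\la,n}}=2\cos(\nu_\la\tau_{\la,n})=2\cos\theta_\la$, the factor $\cos\theta_\la\to\cos(\pi/2)=0$ by \eqref{2.32} while $u_\la\to0$, so the whole right-hand side tends to $0$ and is in fact $O(\la-\la_*)$. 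The Fredholm condition, together with $\langle\phi^*,N_{\la_*}\phi\rangle=-h_{\la_*}\int_\Om\phi\phi^*\,dx\neq0$ and the vanishing of the source, forces the $\phi$-direction contribution to remain bounded (there is no $\frac{1}{\la-\la_*}$ blow-up), while inversion on $(X_1)_\C$ controls the complementary part; hence $F_\la$ stays bounded and $\tilde\eta_\la:=(\la-\la_*)F_\la\to0$ in $X_\C$. I expect the main obstacle to be purely technical bookkeeping: justifying the $(\la-\la_*)$-rate expansions of $u_\la,\psi_\la,\nu_\la,\theta_\la$ and the precise cancellation that produces $M_{\la_*}$, and establishing uniform-in-$\la$ bounded invertibility of the \emph{non-self-adjoint} operator $L_{\la_*}$ on the complement $(X_1)_\C\to(Y_1)_\C$, so that all remainder estimates hold uniformly as $\la\to\la_*$.
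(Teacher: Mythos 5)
Your proposal is correct and follows essentially the same route as the paper: both write $(\lambda-\lambda_*)E_\lambda=c_\lambda\phi+\eta$ with $\eta\in(X_1)_{\mathbb{C}}$, split the equation into the $\phi^*$-solvability condition (which, using \eqref{al} and \eqref{2.32}, yields $c_{\lambda_*}(1-2\mathrm{i})=-2\mathrm{i}/\beta_{\lambda_*}$) and the $(Y_1)_{\mathbb{C}}$-projection inverted by $L_{\lambda_*}|_{X_1}$, and then solve the coupled system near $(0,c_{\lambda_*})$ by the implicit function theorem; the treatment of $F_\lambda$ via $\cos\theta_\lambda\to 0$ and $u_\lambda=O(\lambda-\lambda_*)$ likewise matches the paper's (omitted) ``similar'' argument. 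Your limit computations for $M_{\lambda_*}$, the right-hand side, and $c_{\lambda_*}$ all agree with the paper's.
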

\begin{proof}
We just derive the estimate for $E_{\lambda}$, and that for $F_{\lambda}$ can be derived similarly. It follows from (3.18) that if $\psi$ satisfies
\begin{equation}\label{3.20}
\begin{aligned}
    \Delta(\lambda, 2\mathrm{i}\nu_{\lambda},\tau_{\lambda,n})\psi&=-2\lambda(\lambda-\lambda_{*})(f^{\prime}_u\left(x, u_{\lambda}\right) \psi_{\lambda}^{2} e^{-\mathrm{i}\nu_{\lambda}\tau_{\lambda,n}} \\&+\frac{1}{2}u_{\lambda}f^{\prime\prime}_{uu}\left(x, u_{\lambda}\right)\psi_{\lambda}^{2} e^{-\mathrm{2i}\nu_{\lambda}\tau_{\lambda,n}}).
\end{aligned}
\end{equation}
then $\psi=(\lambda-\lambda_{*})E_{\lambda}$. Substituting $u_{\lambda}(x)=t_{\lambda}\phi(x)+h(t_{\lambda}\phi(x),\lambda)$ and $\nu_{\lambda}=(\lambda-\lambda_{*})h_{\lambda}$ into \eqref{3.20}, we have
\begin{equation*}
\begin{aligned}
&L_{\lambda_{*}}\eta + (\lambda -\lambda _{*})[ m(x)+\lambda m_{1}(\xi, \beta, \lambda)-2ih](c \phi + \eta)+ \\&\lambda (t_\lambda \phi(x) + h(t_\lambda \phi(x), \lambda)) f^{\prime}_u\left(x,u_{\lambda}\right)(c \phi + \eta) e^{-2i\theta}\\&=-2\lambda(\lambda-\lambda_{*})(f^{\prime}_u\left(x, u_{\lambda}\right) \psi_{\lambda}^{2} e^{-\mathrm{i}\theta} +\frac{1}{2}u_{\lambda}f^{\prime\prime}_{uu}\left(x, u_{\lambda}\right)\psi_{\lambda}^{2} e^{-\mathrm{2i}\theta})  , \\    
\end{aligned}   
\end{equation*}

where $L_{\lambda_{*}}$ is defined in (2.3). We see that $\psi = c\phi + \eta$ solves \eqref{E}, where $c \in \mathbb{C}$ and $\eta \in (X_1)_{\mathbb{C}}$, if and only if $(c, \eta)$ solves the following system

\begin{equation*}
F(\eta, c, \lambda) = 0, 
\end{equation*}

where $F = (f_1, f_2)^T : (X_1)_{\mathbb{C}} \times \mathbb{C} \times \mathbb{R} \to (Y_1)_{\mathbb{C}} \times \mathbb{C}$, and

\begin{equation*}
\begin{cases}
f_1(\eta, c, \lambda) &:= L_{\lambda_{*}}\eta + (\lambda -\lambda _{*})[ m(x)+\lambda m_{1}(\xi, \beta, \lambda)-2ih](c \phi + \eta)+ \\&\lambda (t_\lambda \phi(x) + h(t_\lambda \phi(x), \lambda)) f^{\prime}_u\left(x,u_{\lambda}\right)(c \phi + \eta) e^{-2i\theta}\\&+2\lambda(\lambda-\lambda_{*})(f^{\prime}_u\left(x, u_{\lambda}\right) \psi_{\lambda}^{2} e^{-\mathrm{i}\theta} +\frac{1}{2}u_{\lambda}f^{\prime\prime}_{uu}\left(x, u_{\lambda}\right)\psi_{\lambda}^{2} e^{-\mathrm{2i}\theta}) \\&- (\lambda - \lambda_*)f_2(\eta, c, \lambda),\\
f_2(\eta, c, \lambda) &:= \int_\Omega [ m(x)+\lambda m_{1}(\xi, \beta, \lambda)-2ih](c \phi + \eta)\phi^*+ \\&\lambda (\beta_\lambda \phi(x) + \frac{h(t_\lambda \phi(x), \lambda)}{\lambda - \lambda_*}) f^{\prime}_u\left(x,u_{\lambda}\right)(c \phi + \eta) e^{-2i\theta}\phi^*\\&+2\lambda(f^{\prime}_u\left(x, u_{\lambda}\right) \psi_{\lambda}^{2} e^{-\mathrm{i}\theta} +\frac{1}{2}u_{\lambda}f^{\prime\prime}_{uu}\left(x, u_{\lambda}\right)\psi_{\lambda}^{2} e^{-\mathrm{2i}\theta})\phi^* dx.
\end{cases} 
\end{equation*}

We first solve (4.16) for $\lambda = \lambda_*$. Since $f_1(\eta, c, \lambda_*) = L\eta = 0$, we obtain that $\eta = \eta_{\lambda_*} = 0$. Noticing that
\begin{align*}
\beta_{\lambda_*} &= \ds\f{\ds\int_{\Om}m(x)\phi\phi^{*}dx}{-\la_*\ds\int_{\Om}f^{\prime}_{u}(x, 0)\phi^2\phi^{*}dx}, \psi_{\lambda_*}= \phi, 
\xi_{\lambda_*} = 0, 
\theta_{\lambda_*} = \frac{\pi}{2}, 
h_{\lambda_*} = \frac{\int_\Omega m(x)\phi\phi^* dx}{\int_\Omega \phi\phi^* dx},
\end{align*}

we see that $f_2(\eta_{\lambda_*}, c, \lambda_*) = 0$ if and only if
\[
c = c_{\lambda_*} = \frac{2\mathrm{i}}{\beta_{\lambda_*}(2\mathrm{i} - 1)}.
\]

A direct computation implies that the Fréchet derivative of $F$ with respect to $(\eta,c)$ at $(\eta_{\lambda_{*}},c_{\lambda_{*}},\lambda_{*})$ is as follows:
\[
\widetilde{T}=(\widetilde{T}_{1},\widetilde{T}_{2})^{T}:=(D_{(\eta,c)}f_{1}(\eta_{\lambda_{*}},c_{\lambda_{*}},\lambda_{*}),D_{(\eta,c)}f_{2}(\eta_{\lambda_{*}},c_{\lambda_{*}},\lambda_{*}))^{T},
\]

where
\[
\widetilde{T}_{1}[\varphi,\epsilon]=L_{\lambda_{*}}\varphi,
\]
and
\begin{equation*}
\begin{aligned}
\widetilde{T}_{2}[\varphi,\epsilon] = \int_{\Omega} \biggl[ & m(x) + \lambda m_{1}(\xi,\beta,\lambda) - 2\mathrm{i}h]\phi^{*} \\
& + \lambda\left(t_{\lambda}\phi(x) + h\left(t_{\lambda}\phi(x),\lambda\right)\right) f^{\prime}(x,u_{\lambda})e^{-2\mathrm{i}\theta}\phi^{*} \biggr] (\epsilon\phi+\varphi) \, dx.
\end{aligned}
\end{equation*}

Then $\widetilde{T}$ is a bijection from $(X_{1})_{\mathbb{C}}\times\mathbb{C}$ to $(Y_{1})_{\mathbb{C}}\times\mathbb{C}$. It follows from the implicit function theorem that there is  a continuously differentiable mapping $\lambda\mapsto(\eta_{\lambda},c_{\lambda})$ such that $F(\eta_{\lambda},c_{\lambda},\lambda)=0$ for $\lambda\in \Lambda_{2\epsilon}$, and $(\eta_{\lambda},c_{\lambda})=(\eta_{\lambda_{*}},c_{\lambda_{*}})$ for $\lambda=\lambda_{*}$. Therefore,
\[
\lim_{\lambda\to\lambda_{*}}c_{\lambda}=\frac{2\mathrm{i}}{\beta_{\lambda_{*}}(2\mathrm{i}-1)},\quad \text{and} \quad \lim_{\lambda\to\lambda_{*}}\eta_{\lambda}=0 \text{ in } X_{\mathbb{C}}.
\]
\end{proof}

Using similar arguments to the proof of lemma \ref{comf}, we can obtain the following result:
\begin{lemma}
Assume that $E_{\lambda}$ and $F_{\lambda}$ satisfy \eqref{E} and \eqref{F1}, respectively, and $\lambda \in \Lambda_{1\epsilon}$.
Then
\begin{align*}
E_{\lambda} &= \frac{1}{\lambda - \lambda_{*}} (c_{\lambda} \phi + \eta), 
F_{\lambda} = \frac{\tilde{\eta}_{\lambda}}{\lambda - \lambda_{*}},
\end{align*}
where $\phi$ is defined in (2.7), $c_{\lambda}$ satisfies
\[
\lim_{\lambda \to \lambda_{*}} c_{\lambda} = \frac{2\mathrm{i}}{\beta_{\lambda_{*}}(2\mathrm{i} + 1)},
\]
and $\eta, \tilde{\eta}_{\lambda} \in (X_{1})_{\mathbb{C}}$ satisfy
\[
\lim_{\lambda \to \lambda_{*}} \|\eta\|_{X_{\mathbb{C}}} = 0, \quad 
\lim_{\lambda \to \lambda_{*}} \|\tilde{\eta}_{\lambda}\|_{X_{\mathbb{C}}} = 0.
\]
\end{lemma}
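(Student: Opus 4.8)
The plan is to repeat the proof of Lemma~\ref{comf} verbatim in structure, feeding in the $\Lambda_{1\epsilon}$ data supplied by Theorem~\ref{c25}(ii) in place of the $\Lambda_{2\epsilon}$ data. Concretely, for $\lambda\in\Lambda_{1\epsilon}$ one has $\nu_\lambda=-(\lambda-\lambda_*)h_\lambda$ and $\lim_{\lambda\to\lambda_*}\theta_\lambda=\theta_1=\f{3\pi}{2}$, so that $e^{-\mathrm{i}\theta_{\lambda_*}}=\mathrm{i}$ while $e^{-2\mathrm{i}\theta_{\lambda_*}}=-1$. As in Lemma~\ref{comf}, I would set $\psi=(\lambda-\lambda_*)E_\lambda$ and seek a solution of \eqref{E} of the form $\psi=c\phi+\eta$ with $c\in\mathbb{C}$ and $\eta\in(X_1)_{\mathbb{C}}$. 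Substituting $u_\lambda=t_\lambda\phi+h(t_\lambda\phi,\lambda)$ and $\nu_\lambda=-(\lambda-\lambda_*)h_\lambda$ into \eqref{3.20} and factoring out $(\lambda-\lambda_*)$, the problem reduces to $F(\eta,c,\lambda)=(f_1,f_2)^T=0$ with the same $f_1,f_2$ as before, except that the term $-2\mathrm{i}h$ is replaced by $+2\mathrm{i}h$ (the only algebraic effect of the sign flip in $\nu_\lambda$).

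Next I would solve $F=0$ at $\lambda=\lambda_*$. Since $f_1(\eta,c,\lambda_*)=L_{\lambda_*}\eta=0$ and $L_{\lambda_*}$ is injective on $(X_1)_{\mathbb{C}}$, one gets $\eta_{\lambda_*}=0$. Writing $P=\int_\Om m(x)\phi\phi^*\,dx$, $Q=\int_\Om f^{\prime}_u(x,0)\phi^2\phi^*\,dx$, $R=\int_\Om \phi\phi^*\,dx$ and using $h_{\lambda_*}R=P$ and $\lambda_*\beta_{\lambda_*}Q=-P$, the relation $f_2(0,c,\lambda_*)=0$ collapses, after the two phase substitutions $e^{-2\mathrm{i}\theta_{\lambda_*}}=-1$ and $e^{-\mathrm{i}\theta_{\lambda_*}}=\mathrm{i}$, to $cP(1+2\mathrm{i})+2\mathrm{i}\lambda_*Q=0$. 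Solving and using $\lambda_*Q/P=-1/\beta_{\lambda_*}$ yields exactly $c_{\lambda_*}=\f{2\mathrm{i}}{\beta_{\lambda_*}(2\mathrm{i}+1)}$; the joint effect of the flipped $\mathrm{i}h$ term and of $e^{-\mathrm{i}\theta_{\lambda_*}}=\mathrm{i}$ is precisely to convert the factor $(2\mathrm{i}-1)$ of Lemma~\ref{comf} into $(2\mathrm{i}+1)$.

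I would then verify that the Fréchet derivative $\widetilde T=(\widetilde T_1,\widetilde T_2)^T$ of $F$ at $(\eta_{\lambda_*},c_{\lambda_*},\lambda_*)$ is a bijection from $(X_1)_{\mathbb{C}}\times\mathbb{C}$ onto $(Y_1)_{\mathbb{C}}\times\mathbb{C}$. As before $\widetilde T_1[\varphi,\epsilon]=L_{\lambda_*}\varphi$ inverts the $(Y_1)_{\mathbb{C}}$ component uniquely, so bijectivity reduces to showing that the coefficient of $\epsilon$ in $\widetilde T_2[0,\epsilon]$ is nonzero; a direct computation (again using $e^{-2\mathrm{i}\theta_{\lambda_*}}=-1$, $\lambda_*\beta_{\lambda_*}Q=-P$, $h_{\lambda_*}R=P$) gives this coefficient as $P(1+2\mathrm{i})$, which is nonzero since $P=\int_\Om m(x)\phi\phi^*\,dx>0$ by Lemma~\ref{lemma2.2}. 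The implicit function theorem then produces a $C^1$ map $\lambda\mapsto(\eta_\lambda,c_\lambda)$ on $\Lambda_{1\epsilon}$ with $(\eta_\lambda,c_\lambda)\to(0,c_{\lambda_*})$ as $\lambda\to\lambda_*$, giving $E_\lambda=\f{1}{\lambda-\lambda_*}(c_\lambda\phi+\eta)$ with the claimed limits. The estimate for $F_\lambda$ follows the same route with $\Delta(\lambda,0,\tau_{\lambda,n})^{-1}$ replacing $\Delta(\lambda,2\mathrm{i}\nu_\lambda,\tau_{\lambda,n})^{-1}$; here no $\mathrm{i}h$ term is present, and since $e^{\mathrm{i}\theta_{\lambda_*}}+e^{-\mathrm{i}\theta_{\lambda_*}}=2\cos\theta_1=0$ the leading inhomogeneity vanishes at $\lambda_*$, forcing the solution into $(X_1)_{\mathbb{C}}$ and yielding $F_\lambda=\tilde\eta_\lambda/(\lambda-\lambda_*)$ with $\tilde\eta_\lambda\to 0$.

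The main obstacle is purely bookkeeping: one must track the two independent sign changes — the $+2\mathrm{i}h$ coming from $\nu_\lambda=-(\lambda-\lambda_*)h_\lambda$, and the value $e^{-\mathrm{i}\theta_{\lambda_*}}=\mathrm{i}$ coming from $\theta_1=\f{3\pi}{2}$ — and confirm that they combine to replace $(2\mathrm{i}-1)$ by $(2\mathrm{i}+1)$ while leaving the bijectivity of $\widetilde T$ intact (the $\epsilon$-coefficient remains the nonzero quantity $P(1+2\mathrm{i})$). No new analytic input beyond Lemmas~\ref{nu}, \ref{2.7} and~\ref{lemma2.2} is required.
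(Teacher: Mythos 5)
Your proposal is correct and is essentially the paper's intended argument: the paper gives no separate proof for the $\Lambda_{1\epsilon}$ case, stating only that it follows by ``similar arguments'' to the proof of Lemma~\ref{comf}, and what you have written is precisely that adaptation, with the sign bookkeeping done correctly ($e^{-\mathrm{i}\theta_{\lambda_*}}=\mathrm{i}$, $e^{-2\mathrm{i}\theta_{\lambda_*}}=-1$, and $-2\mathrm{i}h$ replaced by $+2\mathrm{i}h$, which together turn $(2\mathrm{i}-1)$ into $(2\mathrm{i}+1)$ while keeping the $\epsilon$-coefficient $P(1+2\mathrm{i})\neq 0$ so that the implicit function theorem still applies). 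Your observation that $2\cos\theta_1=0$ kills the leading inhomogeneity for $F_\lambda$ likewise matches the structure of the original proof.
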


It is well known that the following quantities determine the direction and stability of bifurcating periodic orbits (see \cite{hassard1981theory,wu1996theory,faria2002stability}):
\begin{align*}
C_1(0) &= \frac{i}{2\nu_{\lambda}\tau_n} \left( g_{11}g_{20} - 2|g_{11}|^2 - \frac{|g_{02}|^2}{3} \right) + \frac{g_{21}}{2}, \\
\mu_2 &= -\frac{\mathcal{R}e(C_1(0))}{\mathcal{R}e(\mu'(\tau_n))}, \\
\beta_2 &= 2\mathcal{R}e(C_1(0)), \\
T_2 &= -\frac{\mathcal{I}m(C_1(0)) + \mu_2\mathcal{I}m(\mu'(\tau_n))}{\tau_n}.
\end{align*}

Here
\begin{enumerate}
\item $\mu_2$ determines the direction of the Hopf bifurcation: if $\mu_2 > 0$ ($\mu_2 < 0$), then the bifurcating periodic solutions exist for $\tau > \tau_n$ ($\tau < \tau_n$), and the bifurcation is called forward (backward);
\item $\beta_2$ determines the stability of bifurcating periodic solutions: the bifurcating periodic solutions are orbitally asymptotically stable (unstable) if $\beta_2 < 0$ ($\beta_2 > 0$);
\item $T_2$ determines the period of the bifurcating periodic solutions: the period increases (decreases) if $T_2 > 0$ ($T_2 < 0$).
\end{enumerate}

Hence we have the following results:
\begin{theorem}\label{4.4}
If $\ds\int_{\Om}f^{\prime}_{u}(x, 0)\phi^2\phi^{*}dx \neq 0$, then there exists $\varepsilon>0$ such that
\begin{enumerate}
    \item for each $(n,\lambda)\in\mathbb{N}\cup\{0\}\times\Lambda_{2\varepsilon}$, a branch of spatially nonhomogeneous periodic orbits of \eqref{delay} emerges from $(\tau,u)=(\tau_{n,\lambda},u_{\lambda})$. Moreover, the direction of the Hopf bifurcation at $\tau=\tau_{n,\lambda}$ is forward and the bifurcating periodic solution from $\tau=\tau_{0,\lambda}$ is locally asymptotically stable.
    \item for each $(n,\lambda)\in\mathbb{N}\cup\{0\}\times\Lambda_{1\varepsilon}$, a branch of spatially nonhomogeneous periodic orbits of \eqref{delay} emerges from $(\tau,u)=(\tau_{n,\lambda},u_{\lambda})$. Moreover, the direction of the Hopf bifurcation at $\tau=\tau_{n,\lambda}$ is forward and the bifurcating periodic solution  is  unstable.
\end{enumerate}
\end{theorem}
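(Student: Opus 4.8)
The quantities $\mu_2$ and $\beta_2$ are governed entirely by the sign of $\mathcal{R}e(C_1(0))$: by Theorem \ref{thm35} we have $\mathcal{R}e(\mu'(\tau_{n,\lambda})) > 0$, so $\mu_2 = -\mathcal{R}e(C_1(0))/\mathcal{R}e(\mu'(\tau_{n,\lambda}))$ carries the sign of $-\mathcal{R}e(C_1(0))$, while $\beta_2 = 2\mathcal{R}e(C_1(0))$ carries the sign of $\mathcal{R}e(C_1(0))$. Hence the plan is to determine $\operatorname{sign}\mathcal{R}e(C_1(0))$ as $\lambda\to\lambda_*$ and show that $\mathcal{R}e(C_1(0)) < 0$ for $\lambda$ close to $\lambda_*$; this yields $\mu_2 > 0$ (forward) in both cases, and the asserted stability/instability will then follow.

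First I would record the asymptotic orders as $\lambda\to\lambda_*$. Writing $\sigma = \lambda-\lambda_*$ and using Lemma \ref{nu}, Theorem \ref{c25} and the identity $\nu_\lambda\tau_{\lambda,n} = \theta_\lambda + 2n\pi$, one has $\psi_\lambda\to\phi$, $\widetilde\psi_\lambda\to\phi^*$, $u_\lambda = O(\sigma)$, $\tau_{\lambda,n} = O(\sigma^{-1})$, and $S_n(\lambda)\to[1 + \mathrm{i}(\pi/2 + 2n\pi)]\int_{\Om}\phi\phi^*dx$ for $\lambda\in\Lambda_{2\varepsilon}$ (with $\theta_\lambda\to\pi/2$; for $\Lambda_{1\varepsilon}$ one uses $\theta_\lambda\to 3\pi/2$). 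Feeding these into \eqref{gij} together with $e^{-\mathrm{i}\nu_\lambda\tau_{\lambda,n}} = e^{-\mathrm{i}\theta_\lambda}\to-\mathrm{i}$, I find $g_{20}^\lambda, g_{02}^\lambda = O(\sigma^{-1})$ (their leading terms survive precisely because $\int_{\Om}f'_u(x,0)\phi^2\phi^*dx\neq0$), whereas $g_{11}^\lambda = O(1)$ since its leading coefficient $e^{\mathrm{i}\theta_\lambda}+e^{-\mathrm{i}\theta_\lambda} = 2\cos\theta_\lambda\to0$. Next, Lemma \ref{comf} gives $E_\lambda = \sigma^{-1}(c_\lambda\phi + \eta)$ and $F_\lambda = \sigma^{-1}\widetilde\eta_\lambda$ with $c_\lambda\to 2\mathrm{i}/[\beta_{\lambda_*}(2\mathrm{i}-1)]$, so by \eqref{W20}--\eqref{W11} both $w_{20}$ and $w_{11}$ are $O(\sigma^{-1})$, and consequently $g_{21}^\lambda = O(\sigma^{-2})$.

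The decisive observation is that, because $\nu_\lambda\tau_{\lambda,n}$ is real, the factor $\mathrm{i}/(2\nu_\lambda\tau_{\lambda,n})$ is purely imaginary; multiplied by the real quantities $2|g_{11}|^2$ and $|g_{02}|^2/3$ it contributes nothing to $\mathcal{R}e(C_1(0))$, while the cross term $\tfrac{\mathrm{i}}{2\nu_\lambda\tau_{\lambda,n}}g_{11}g_{20}$ is only $O(\sigma^{-1})$. Therefore at the leading order $\sigma^{-2}$ one has $\mathcal{R}e(C_1(0)) = \tfrac12\mathcal{R}e(g_{21}^\lambda) + O(\sigma^{-1})$, and it remains to evaluate $\lim_{\lambda\to\lambda_*}\sigma^2\mathcal{R}e(g_{21}^\lambda)$. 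To do this I would substitute the leading $O(\sigma^{-1})$ parts of $w_{20}(0),w_{20}(-1),w_{11}(0),w_{11}(-1)$---assembled from the $p,\overline p$ contributions in \eqref{W20}--\eqref{W11} together with $E_\lambda,F_\lambda$ from Lemma \ref{comf}---into the $f'_u$-bracket of \eqref{gij}, noting that the $u_\lambda f''_{uu}$ and $u_\lambda f'''_{uuu}$ terms are $O(\sigma^{-1})$ or smaller and drop out, and then extract the real part.

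The main obstacle is precisely this last evaluation: $w_{20}$ and $w_{11}$ each receive several $O(\sigma^{-1})$ contributions (from $g_{20},g_{02},g_{11}$ and from $E_\lambda,F_\lambda$), so the integrand of $g_{21}$ is a sum of many terms, and the definite sign of $\mathcal{R}e(g_{21})$ emerges only after the purely imaginary pieces cancel against one another; one must also check that the $\sigma^{-2}$ coefficient does not itself vanish. Carrying this out yields $\lim_{\lambda\to\lambda_*}\sigma^2\mathcal{R}e(C_1(0)) < 0$, so $\mathcal{R}e(C_1(0)) < 0$ for $\lambda$ near $\lambda_*$. For $\lambda\in\Lambda_{2\varepsilon}$ this gives $\mu_2 > 0$ and $\beta_2 < 0$, i.e. a forward bifurcation with orbitally asymptotically stable periodic orbits, proving (i). For $\lambda\in\Lambda_{1\varepsilon}$ the identical computation (with $\theta_\lambda\to 3\pi/2$) again yields $\mu_2 > 0$, so the bifurcation is forward; the instability of the orbit is not read off from $\beta_2$ but follows directly from Theorem \ref{2.15}(ii), since $\mathcal{A}_{\tau,\lambda}$ already possesses an eigenvalue with positive real part for every $\tau\in[0,\tau_{0,\lambda})$, so the bifurcating orbit inherits a persistent unstable direction, proving (ii).
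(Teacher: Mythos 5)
Your proposal follows essentially the same route as the paper: pass to the scaled limits $\lim_{\la\to\la_*}(\la-\la_*)g_{20}$, $\lim(\la-\la_*)g_{11}=0$, $\lim(\la-\la_*)g_{02}$, $\lim(\la-\la_*)^2g_{21}$ using Lemma \ref{nu}, Theorem \ref{c25} and Lemma \ref{comf}, observe that only $g_{21}$ contributes to $\mathcal{R}e(C_1(0))$ at order $(\la-\la_*)^{-2}$, and conclude from $\lim(\la-\la_*)^2\mathcal{R}e(C_1(0))<0$ that $\mu_2>0$ and $\beta_2<0$. Your order bookkeeping is correct, and your explicit remark that $\mathrm{i}/(2\nu_\la\tau_{\la,n})$ is exactly purely imaginary (so the $|g_{11}|^2$ and $|g_{02}|^2$ terms never touch the real part) makes precise something the paper only uses implicitly; likewise, deducing the instability in case (ii) from Theorem \ref{2.15}(ii) rather than from $\beta_2$ is more careful than the paper, which computes $\mathcal{R}e(C_1(0))<0$ in both cases and leaves the transition from ``$\beta_2<0$ on the center manifold'' to ``unstable orbit'' unstated. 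The one substantive shortfall is that the step you yourself identify as the main obstacle --- evaluating $\lim_{\la\to\la_*}(\la-\la_*)^2 g_{21}$ and checking that its real part is strictly negative and nonvanishing --- is precisely the content of the paper's proof and is only asserted in your write-up: the paper obtains, for $\la\in\Lambda_{2\varepsilon}$, the value $\frac{2A(1-3\mathrm{i})}{5\beta_{\la_*}^2(2+\mathrm{i}A)}+\frac{8\mathrm{i}A}{3\beta_{\la_*}^2|2+\mathrm{i}A|^2}$ with $A=\pi(1+4n)$, whose real part $\frac{2A(2-3A)}{5\beta_{\la_*}^2(4+A^2)}$ is negative because $A>2/3$ (and analogously with $A=\pi(3+4n)$ on $\Lambda_{1\varepsilon}$). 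In carrying that computation out you would also find that the surviving $O((\la-\la_*)^{-2})$ contributions come only from the $w_{20}$ terms (and within those, the $g_{20}$ piece cancels through the factor $e^{\mathrm{i}\nu_\la\tau_{\la,n}}+e^{-\mathrm{i}\nu_\la\tau_{\la,n}}=2\cos\theta_\la\to0$), since $w_{11}=o((\la-\la_*)^{-1})$ by $g_{11}=O(1)$ and $\lim(\la-\la_*)F_\la=0$; your sketch lists the $w_{11}$ terms among the leading contributions, which is harmless but worth tightening.
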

\begin{proof}
    To begin with, we will give the proof of 1.
    From Eqs. (3.12), (3.16), (3.17) and (3.18), we can compute $g_{20}$, $g_{11}$, $g_{02}$ and $g_{21}$ for the periodic orbits emerging from the Hopf bifurcation of Eq. \ref{delay} obtained in Theorem \ref{2.17}. Since $\la \in \Lambda_{2\epsilon}$, $\lambda\rightarrow\lambda_{*}$

\[
\psi_{\lambda} \rightarrow \phi,\widetilde{\psi}_{\lambda} \rightarrow \phi^*, \quad u_{\lambda} /(\lambda-\lambda_{*}) \rightarrow \beta_{\lambda_*} \phi \text{ in } C(\overline{\Omega}),
\]

\[
v_{\lambda} /(\lambda-\lambda_{*}) \rightarrow h_{\lambda_{*}}, \quad v_{\lambda} \tau_{n} \rightarrow \frac{\pi}{2}+2 n \pi.
\]
then
\begin{align*}
&\lim_{\lambda\to\lambda_{*}}S_{n}(\lambda)=\left[1+i\left(\frac{\pi}{2}+2n\pi\right)\right]\int_{\Omega}\phi\phi^{*}dx, 
\quad \lim_{\lambda\to\lambda_{*}}(\lambda-\lambda_{*})F=0, &\\
&\lim_{\lambda\to\lambda_{*}}\tau_{\lambda,n}(\lambda-\lambda_{*})=\frac{1}{h_{\lambda_{*}}}\left(\frac{\pi}{2}+2n\pi\right),
\quad \lim_{\lambda\to\lambda_{*}}\nu_{\lambda}\tau_{n}=\frac{\pi}{2}+2n\pi, &\\
&\lim_{\lambda\to\lambda_{*}}(\lambda-\lambda_{*})E=\frac{2i}{\beta_{\lambda_{*}}(2i-1)}\phi. &
\end{align*}

So we compute that
\begin{align*}
&\lim_{\lambda\to\lambda_{*}}(\lambda-\lambda_{*})g_{20}=\frac{2i(\pi+4n\pi)}{\beta_{\lambda_{*}}(2+i(\pi+4n\pi))}, &\\
&\lim_{\lambda\to\lambda_{*}}(\lambda-\lambda_{*})g_{11}=0, &\\
&\lim_{\lambda\to\lambda_{*}}(\lambda-\lambda_{*})g_{02}=\frac{-2i(\pi+4n\pi)}{\beta_{\lambda_{*}}(2+i(\pi+4n\pi))}, &\\
&\lim_{\lambda\to\lambda_{*}}(\lambda-\lambda_{*})^{2}g_{21}=\frac{2(\pi+4n\pi)(1-3i)}{\beta_{\lambda_{*}}^{2}(10+5i(\pi+4n\pi))}+\frac{8i\pi(1+4n)}{3\beta_{\lambda_{*}}^{2}|2+i(\pi+4n\pi)|^{2}}. &
\end{align*}

Then we can compute that $\lim_{\lambda\to\lambda_{*}}\mathcal{R}e((\lambda-\lambda_{*})^{2}g_{21})<0$ and $\lim_{\lambda\to\lambda_{*}}\mathcal{R}e((\lambda-\lambda_{*})^{2}C_{1}(0))<0$.

When $\lambda\in \Lambda_{1\epsilon}$, we have
\begin{align*}
&\lim_{\lambda\to\lambda_{*}}S_{n}(\lambda)=\left[1+i\left(\frac{3\pi}{2}+2n\pi\right)\right]\int_{\Omega}\phi\phi^{*}dx, 
\quad \lim_{\lambda\to\lambda_{*}}(\lambda-\lambda_{*})F=0, &\\
&\lim_{\lambda\to\lambda_{*}}\tau_{\lambda,n}(\lambda-\lambda_{*})=\frac{-\left(\frac{3\pi}{2}+2n\pi\right)}{h_{\lambda_{*}}},
\quad \lim_{\lambda\to\lambda_{*}}\nu_{\lambda}\tau_{n}=\frac{3\pi}{2}+2n\pi, &\\
&\lim_{\lambda\to\lambda_{*}}(\lambda-\lambda_{*})E=\frac{2i}{\beta_{\lambda_{*}}(2i+1)}\phi. \\
&\lim_{\lambda\to\lambda_{*}}(\lambda-\lambda_{*})g_{20}=\frac{2i(3\pi+4n\pi)}{\beta_{\lambda_{*}}(2+i(3\pi+4n\pi))}\\
&\lim_{\lambda\rightarrow\lambda_{*}}(\lambda-\lambda_{*})g_{11} = 0, \\
&\lim_{\lambda\rightarrow\lambda_{*}}(\lambda-\lambda_{*})g_{02} = \frac{-2i(3\pi+4n\pi)}{\beta_{\lambda_{*}}(2+i(3\pi+4n\pi))}, \\
&\lim_{\lambda\rightarrow\lambda_{*}}(\lambda-\lambda_{*})^{2}g_{21} = \frac{2(3\pi+4n\pi)(-1-3i)}{\beta_{\lambda_{*}}^{2}(10+5i(3\pi+4n\pi))} + \frac{8i\pi(3+4n)}{3\beta_{\lambda_{*}}^{2}|2+i(3\pi+4n\pi)|^{2}}.
\end{align*}

Then we can compute that 
$\lim_{\lambda \to \lambda_{*}} \mathcal{R}e((\lambda - \lambda_{*})^{2} g_{21}) < 0$, 
and 
$\lim_{\lambda \to \lambda_{*}} \mathcal{R}e((\lambda - \lambda_{*})^{2} C_{1}(0)) < 0$.
\end{proof}

\section{Numerical Simulation and Conclusions}
\subsection{Numerical Simulation}
We focus on numerical simulations of the food-limited model to validate our theoretical results, as logistic growth has been thoroughly analyzed elsewhere \cite{liu2022}. The model was originally proposed by Davidson and Gourley \cite{davidson2001effects}:

\begin{equation}
\frac{\partial v(x,t)}{\partial t} = -d \frac{\partial^2 v(x,t)}{\partial x^2} + r v(x,t) \frac{k - v(x,t-\tau)}{k + c v(x,t-\tau)}
\label{eq:base_model}
\end{equation}

where $r > 0$ is the growth rate, $k > 0$ the carrying capacity, $c > 0$ a constant, and $r/c$ represents the mass replacement at saturation. To incorporate population movement toward favorable habitats, we add an advection term:

\begin{equation*}
\frac{\partial v(x,t)}{\partial t} = D \nabla \cdot \left[ d(x) \nabla v(x,t) - \vec{b}(x) v(x,t) \right] + r v(x,t) \frac{k - v(x,t-\tau)}{k + c v(x,t-\tau)}
\label{eq:advection_model}
\end{equation*}

After a variable substitution, the model becomes:

\begin{equation}\label{5.3}
\begin{cases}
\displaystyle
u_t = \nabla \cdot [d(x) \nabla u - \vec{b}(x) u] + \lambda u(x,t) \frac{1 - u(x,t-\tau)}{1 + c u(x,t-\tau)} \\
u(0,t) =u(\pi,t) = 0
\end{cases}
\end{equation}
with the following initial value
\begin{equation}\label{5.4}
u(x,t)=\eta(x,t),\quad x\in[0,\pi],\quad t\in[-\tau,0]
\end{equation}
where $\eta\in{C}$, Let
\[
f(u)=\frac{1-u}{1+cu}.
\]

It is obvious that this model satisfies hypotheses $\mathbf{(H_1)}$ and $\mathbf{(H_2)}$, and $f^{\prime}(u)<0$ for $u\in[0,1]$. Applying Theorems \ref{2.1}, \ref{2.17}, \ref {4.4}, we have the following results:

\begin{enumerate}
    
    \item[(i)] For $\lambda \in \Lambda_{2\varepsilon}$, Eq. \eqref{5.3} has a positive steady state solution $u_{\lambda}$.
    
    \item[(ii)] For $\lambda \in \Lambda_{2\varepsilon}$, the steady-state solution $u_{\lambda}$ of \eqref{5.3} is locally asymptotically stable for $\tau \in [0, \tau_{0,\lambda})$ and unstable when $\tau \in (\tau_{0,\lambda}, \infty)$.Moreover, at $\tau = \tau_{n,\lambda}$ ($n=0,1,2,\cdots$) a Hopf bifurcation occurs and a branch of spatially nonhomogeneous periodic orbits of \eqref{5.3} emerges from $(\tau_{n,\lambda}, u_{\lambda})$.
    
    \item[(iii)] for each $(n,\lambda)\in\mathbb{N}\cup\{0\}\times\Lambda_{2\varepsilon}$, a branch of spatially nonhomogeneous periodic orbits of \eqref{5.3} emerges from $(\tau,u)=(\tau_{n,\lambda},u_{\lambda})$. Moreover, the direction of the Hopf bifurcation at $\tau=\tau_{n,\lambda}$ is forward and the bifurcating periodic solution from $\tau=\tau_{0,\lambda}$ is locally asymptotically stable.
\end{enumerate}

To illustrate the analytical results presented in the previous sections, we conduct numerical simulations using model \eqref{5.3}--\eqref{5.4} under the following setting:
\[\Omega = \left\{(x,y) \in \mathbb{R}^{2} \mid (x-\pi)^{2} + (y-\pi)^{2} = \pi^{2} \right\}, \quad d(x,y) = 1 + 0.1x + 0.1y\]
\[c = 0.5  ,\quad \eta(x,t) = 0.001,\quad\vec{b}(x,y) = \left( \frac{\cos x}{\sin x + 2}, \frac{\cos y}{\sin y + 2} \right)\]

we use $\lambda$ = 1.0. For $\lambda$ = 1.0 and smaller $\tau$ , $u_{\lambda}$ is stable as shown in Figure 1. (a),(b).we present cross-sectional views of the solution $u(x,t)$ along the custom line segment from (3.14,0.00)to (3.14,6.28), corresponding to a fixed spatial coordinate $x$=$\pi$ with $y$ varying from $0$ to $2\pi$. The $x$-axis of this graph represents time $t$, while the $y$-axis represents position $x$. The color bar indicates the amplitude of $u$.

\begin{figure}[htbp]
    \centering
    \begin{subfigure}[b]{0.45\textwidth}
        \centering
        \includegraphics[width=\textwidth]{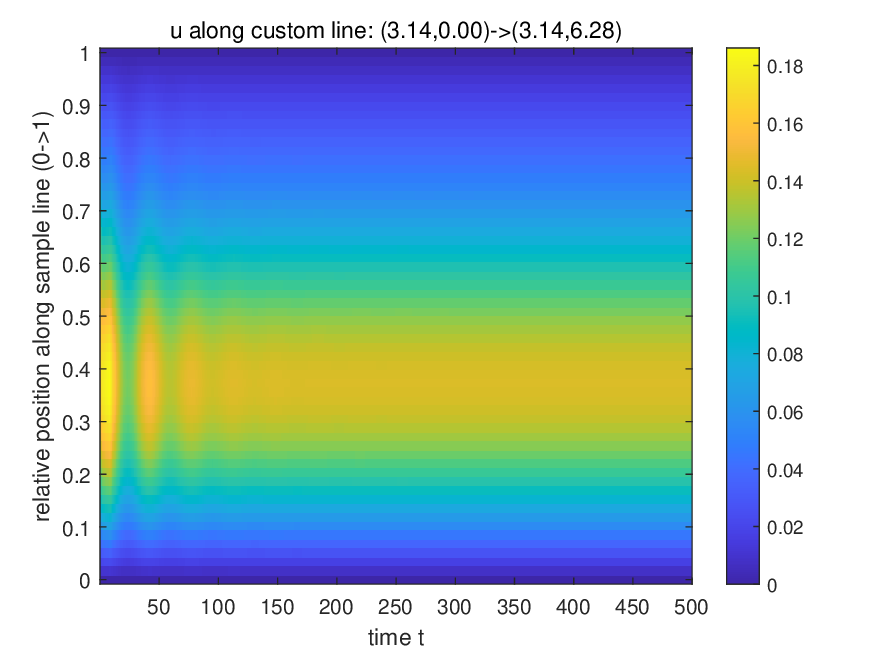}
        \caption{$\tau = 8$}
        \label{fig:tau8}
    \end{subfigure}
    \hfill
    \begin{subfigure}[b]{0.45\textwidth}
        \centering
        \includegraphics[width=\textwidth]{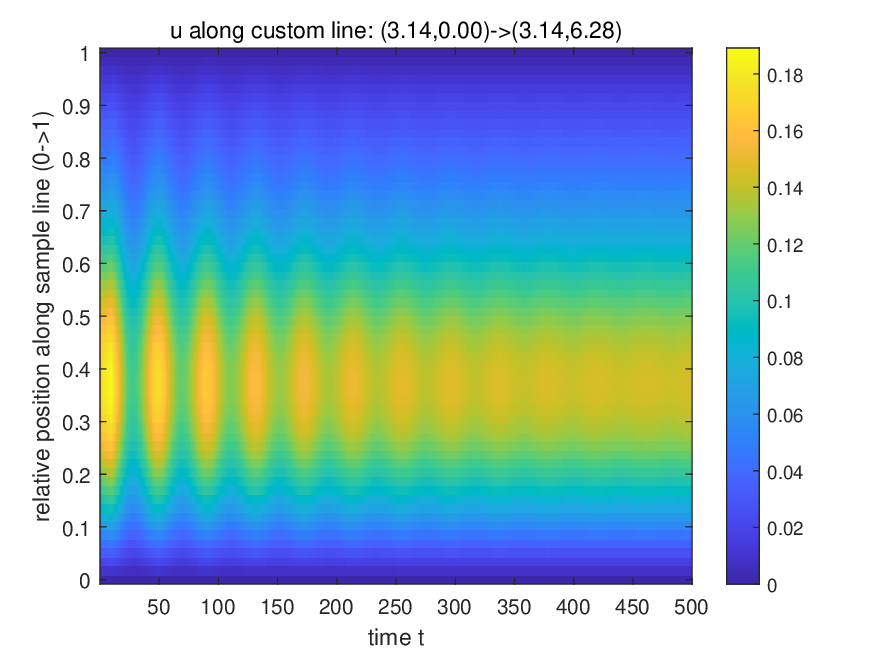}
        \caption{$\tau = 10$}
        \label{fig:tau10}
    \end{subfigure}
    
    \vspace{0.5cm} 
    
    \begin{subfigure}[b]{0.45\textwidth}
        \centering
        \includegraphics[width=\textwidth]{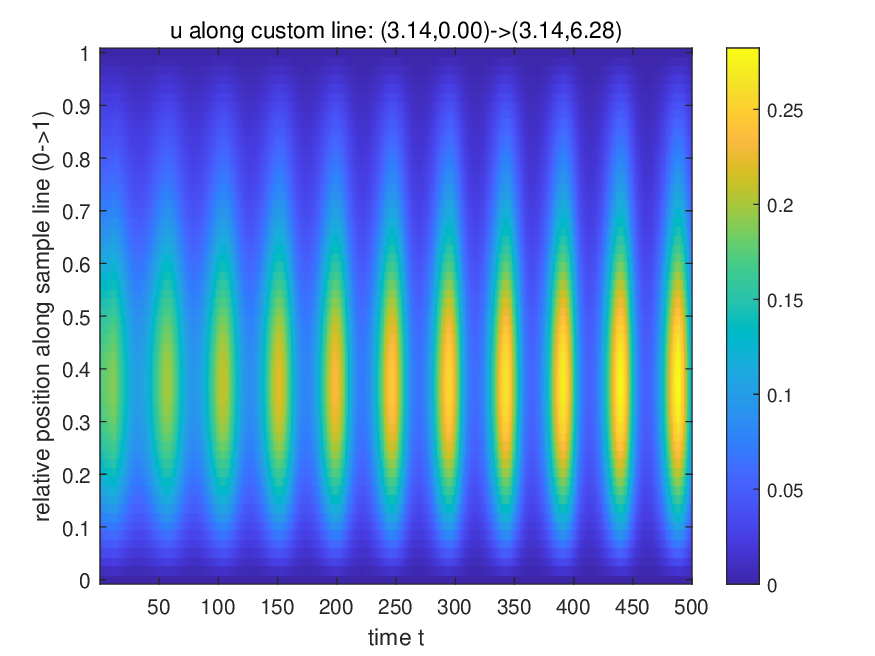}
        \caption{$\tau = 12$}
        \label{fig:tau12}
    \end{subfigure}
    \hfill
    \begin{subfigure}[b]{0.45\textwidth}
        \centering
        \includegraphics[width=\textwidth]{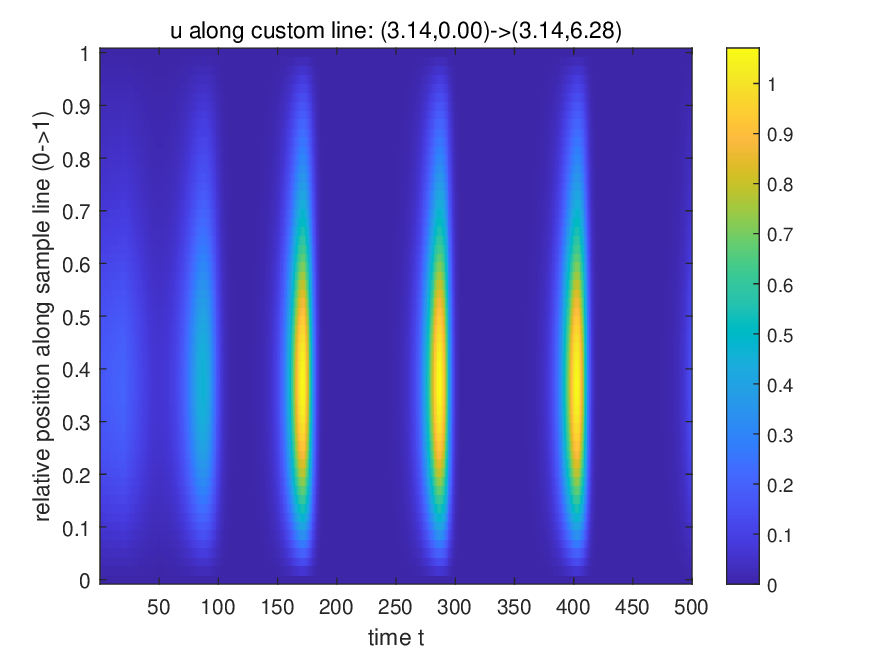}
        \caption{$\tau = 20$}
        \label{fig:tau20}
    \end{subfigure}
    
    \caption{
        (a) $\tau = 8$, the solution approaches to the positive steady state.
        (b) $\tau = 10$, the solution still approaches to the positive steady state but with noticeable oscillations. 
        (c) $\tau = 12$, the solution converges to a time-periodic solution with small oscillations.
        (d) $\tau = 20$, the solution converges to a time-periodic solution and the period and amplitude of the stable periodic solution both increase as the time delay $\tau$ increases.
    }
    \label{fig:time_delay_comparison}
\end{figure}
This phenomenon aligns with theoretical predictions where increasing $\tau$ drives the system toward criticality, beyond which a Hopf bifurcation would occur, resulting in stable periodic solutions, and the bifurcation is forward.

Next we discuss  model incorporating the weak Allee effect,Consider the following model with the same initial and boundary conditions as \eqref{5.3},\eqref{5.4}:
\begin{equation}
\begin{aligned}
&u_t = \nabla \cdot [d(x) \nabla u - \vec{b}(x) u] + 2\lambda u(x,t)(1-u(x,t-\tau))\left(u(x, t-\tau) + \frac{1}{2}\right), \\
\end{aligned}
\end{equation}

Note that the function $f(u) = 2(1 - u)(u + 0.5)$satisfies the fundamental hypothesis $\mathbf{(H_1)}$ , $\mathbf{(H_2)}$ and $f^{\prime}(0)>0$. Our previous analytical results demonstrate that for $\lambda \in \Lambda_{1\varepsilon}$, the positive steady state undergoes a Hopf bifurcation as the time delay $\tau$increases. However, since both the positive steady state and the bifurcating periodic solutions are unstable, these dynamics are unlikely to be observed in standard numerical simulations.

Interestingly, with appropriately chosen initial conditions, a similar stable periodic pattern emerges as $\tau$ increases for the weak Allee effect dynamics. As illustrated in Figure 2, we fix the parameter $\lambda = 0.8$ to systematically explore the impact of the time delay $\tau$ on the system's dynamics.Thus the Hopf bifurcation occurs within the interval $\tau$$\in$$[5.5,9.5]$.

We hypothesize that this phenomenon is attributed to the existence of an additional stable steady-state branch for $\lambda \in \Lambda_{1\varepsilon}$. This conjecture is consistent with the findings in \cite{su2009hopf}, but \cite{{su2009hopf}} does not incorporate a generalized advection term.

\begin{figure*}[htbp]
    \centering
    \begin{subfigure}[b]{0.45\textwidth}
        \centering
        \includegraphics[width=\textwidth]{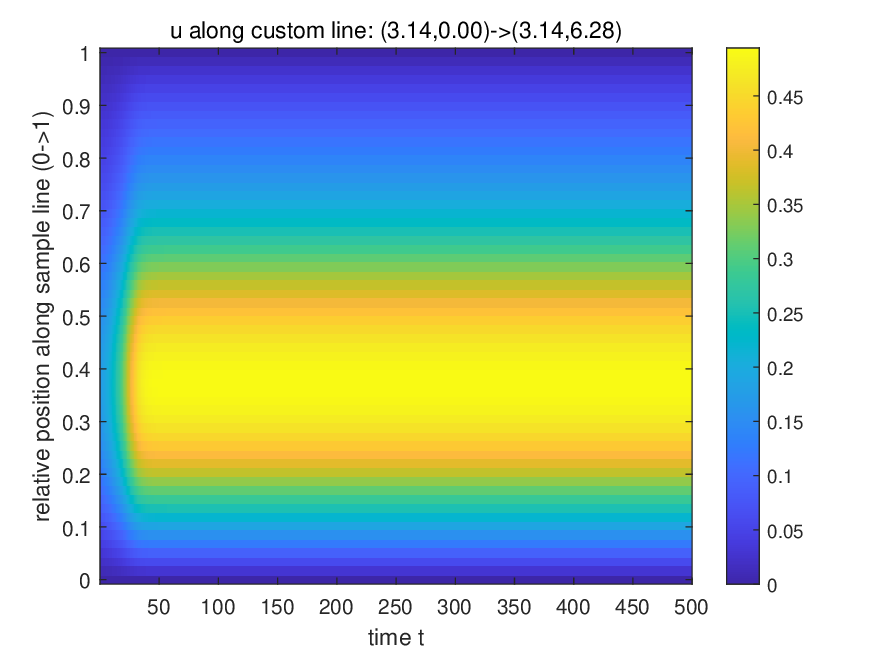}
        \caption{$\tau = 1.5$}
        \label{fig:tau15}
    \end{subfigure}
    \hfill
    \begin{subfigure}[b]{0.45\textwidth}
        \centering
        \includegraphics[width=\textwidth]{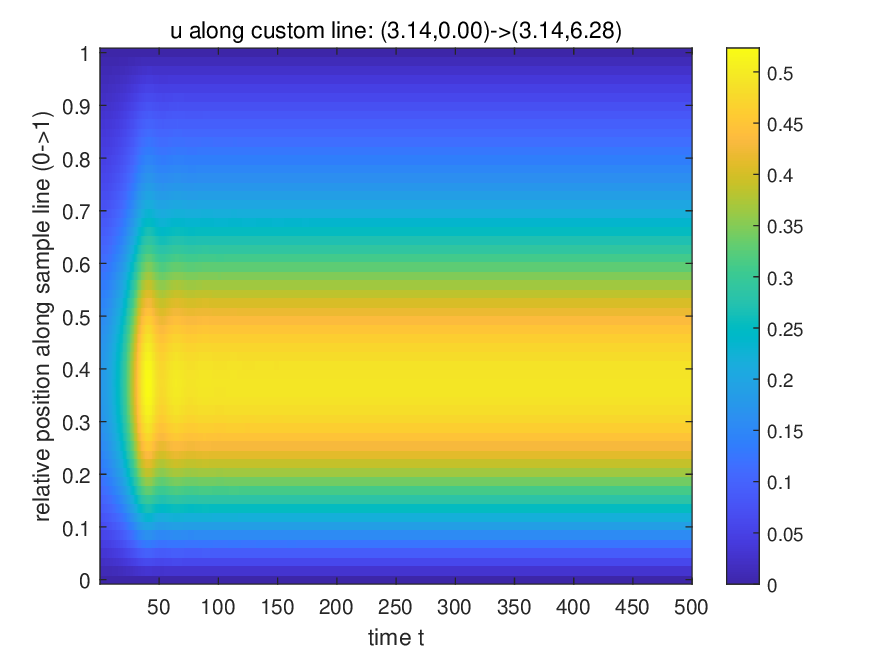}
        \caption{$\tau = 5.5$}
        \label{fig:tau55}
    \end{subfigure}
    \hfill
    \begin{subfigure}[b]{0.45\textwidth}
        \centering
        \includegraphics[width=\textwidth]{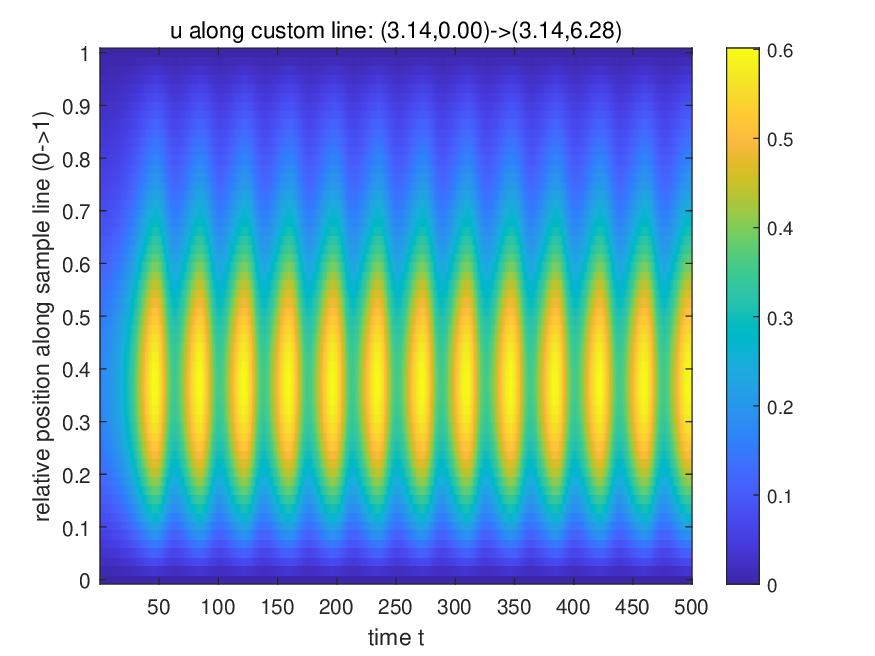}
        \caption{$\tau = 9.5$}
        \label{fig:tau95}
    \end{subfigure} \hfill
    \begin{subfigure}[b]{0.45\textwidth}
        \centering
        \includegraphics[width=\textwidth]{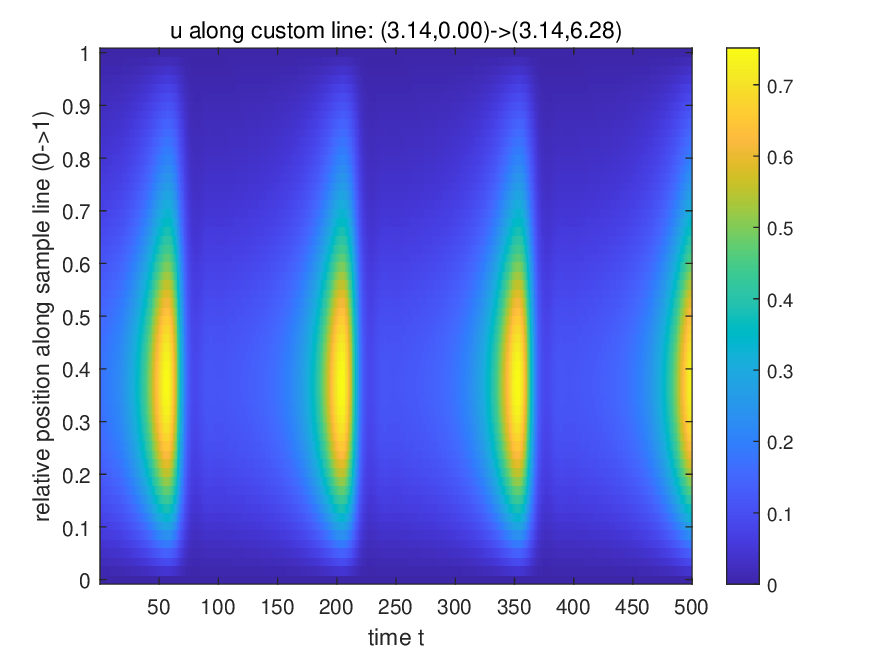}
        \caption{$\tau = 15.5$}
        \label{fig:tau155}
    \end{subfigure}

    \caption{(a) for $\tau$ = 1.5, the solution converges to positive steady state $u_\lambda$.(b) for $\tau$ = 5.5 , the solution approaches $u_\lambda$ with oscillations. (c)for $\tau$ = 9.5, the solution converges to a time-periodic solution.(d)for $\tau$ = 15.5the solution asymptotically converges to a time-periodic solution. Both the period and amplitude of the emerging stable periodic orbit increase with the time delay $\tau$. }
\end{figure*}
\newpage
\subsection{Conclusions}
In this paper, we have investigated the delay-induced instability in a reaction-diffusion model with a general advection term and a general time-delayed per capita growth rate. The primary focus was on the stability and Hopf bifurcation analysis of the positive steady state $u_\lambda$ when the parameter $\lambda$ is near the principal eigenvalue $\lambda_*$ of a non-self-adjoint elliptic operator. The specific points can be summarized as follows:

If $\ds\int_{\Om}f^{\prime}_{u}(x, 0)\phi^2\phi^{*}dx < 0$, then there exists $\varepsilon>0$ such that:
\begin{enumerate}
    \item[(i)] For $\lambda \in \Lambda_{2\varepsilon}$, Eq. \eqref{5.3} has a positive steady state solution $u_{\lambda}$.
    
    \item[(ii)] For $\lambda \in \Lambda_{2\varepsilon}$, the steady-state solution $u_{\lambda}$ of \eqref{5.3} is locally asymptotically stable for $\tau \in [0, \tau_{0,\lambda})$ and unstable when $\tau \in (\tau_{0,\lambda}, \infty)$.Moreover, at $\tau = \tau_{n,\lambda}$ ($n=0,1,2,\cdots$) a Hopf bifurcation occurs and a branch of spatially nonhomogeneous periodic orbits of \eqref{5.3} emerges from $(\tau_{n,\lambda}, u_{\lambda})$.
    
    \item[(iii)] for each $(n,\lambda)\in\mathbb{N}\cup\{0\}\times\Lambda_{2\varepsilon}$, a branch of spatially nonhomogeneous periodic orbits of \eqref{5.3} emerges from $(\tau,u)=(\tau_{n,\lambda},u_{\lambda})$. Moreover, the direction of the Hopf bifurcation at $\tau=\tau_{n,\lambda}$ is forward and the bifurcating periodic solution from $\tau=\tau_{0,\lambda}$ is locally asymptotically stable.
\end{enumerate}

If $\ds\int_{\Om}f^{\prime}_{u}(x, 0)\phi^2\phi^{*}dx > 0$, then there exists $\varepsilon>0$ such that:
\begin{enumerate}
    \item[(i)] For $\lambda \in \Lambda_{1\varepsilon}$, Eq. \eqref{5.3} has a positive steady state solution $u_{\lambda}$.
    
    \item[(ii)] For $\lambda \in \Lambda_{1\varepsilon}$, the steady-state solution $u_{\lambda}$ of \eqref{delay} is locally asymptotically unstable for all $\tau \in [0, \tau_{0,\lambda})$. Moreover, at $\tau = \tau_{n,\lambda}$ ($n=0,1,2,\cdots$) a Hopf bifurcation occurs and a branch of spatially nonhomogeneous periodic orbits of \eqref{delay} emerges from $(\tau_{n,\lambda}, u_{\lambda})$.
    
    \item[(iii)] for each $(n,\lambda)\in\mathbb{N}\cup\{0\}\times\Lambda_{1\varepsilon}$, a branch of spatially nonhomogeneous periodic orbits of \eqref{delay} emerges from $(\tau,u)=(\tau_{n,\lambda},u_{\lambda})$. Moreover, the direction of the Hopf bifurcation at $\tau=\tau_{n,\lambda}$ is forward and the bifurcating periodic solution  is  unstable.
\end{enumerate}

\newpage
\newpage


\begin{thebibliography}{10}


\bibitem{busenberg1996stability}
S.~Busenberg and W.~Huang.
\newblock Stability and {H}opf bifurcation for a population delay model with
  diffusion effects.
\newblock {\em  J. Differential Equations}, 124(1):80--107, 1996.



\bibitem{Chen2012}
S.~Chen and J.~Shi.
\newblock Stability and {H}opf bifurcation in a diffusive logistic population
  model with nonlocal delay effect.
\newblock {\em J. Differential Equations}, 253(12):3440--3470, 2012.

\bibitem{ChenYu2016}
S. Chen and J. Yu.
\newblock Stability and bifurcations in a nonlocal delayed reaction-diffusion population model.
\newblock \emph{J. Differential Equations}, 260(1):218--240, 2016.


\bibitem{liu2022}
J. Liu, S. Chen,
\newblock Delay-induced instability in a reaction-diffusion model with a general advection term.
\newblock J. Math. Anal. Appl,
\newblock vol. 512, no. 2, pp. 126--151, 2022.


\bibitem{Guo2016}
S.~Guo and S.~Yan.
\newblock Hopf bifurcation in a diffusive {L}otka-{V}olterra type system with
  nonlocal delay effect.
\newblock {\em J. Differential Equations}, 260(1):781--817, 2016.


\bibitem{Hale1971}
J.~Hale.
\newblock {\em Theory of {F}unctional {D}ifferential {E}quations}.
\newblock Springer-Verlag, New York, second edition, 1977.


\bibitem{yan2005}
Xiang-Ping Yan, Wan-Tong Li,
\newblock \textit{Stability of bifurcating periodic solutions in a delayed reaction--diffusion population model},
\newblock Nonlinearity,
\newblock Vol. 63, No. 5-7, pp. e373--e384, 2005.

\bibitem{su2009hopf}
Y.~Su, J.~Wei, and J.~Shi.
\newblock Hopf bifurcations in a reaction-diffusion population model with delay
  effect.
\newblock {\em J. Differential Equations}, 247(4):1156--1184, 2009.

\bibitem{wu1996theory}
J.~Wu.
\newblock {\em {T}heory and {A}pplications of {P}artial {F}unctional
  {D}ifferential {E}quations}.
\newblock Springer-Verlag, New York, 1996.


\bibitem{jin2021}
Z. Jin, R. Yuan,
\newblock Hopf bifurcation in a reaction-diffusion-advection equation with nonlocal delay effect,
\newblock \emph{J. Differential Equations},
\newblock vol. 271, pp. 176--223, 2021.

\bibitem{guo2016}
S. Guo, L. Ma,
\newblock \textit{Stability and Bifurcation in a Delayed Reaction--Diffusion Equation with Dirichlet Boundary Condition},
\newblock J. Nonlinear Sci, vol. 26, no. 2, pp. 545--580, 2016.

\bibitem{hu2011spatially}
R. Hu and Y. Yuan.
\newblock Spatially nonhomogeneous equilibrium in a reaction-diffusion system with distributed delay.
\newblock \emph{J. Differential Equations}, 250(6):2779--2806, 2011.

\bibitem{lou2006effects}
Y. Lou.
\newblock On the effects of migration and spatial heterogeneity on single and multiple species.
\newblock \emph{J. Differential Equations}, 223:400--426, 2006.


\bibitem{Yan2010}
X.-P. Yan and W.-T. Li.
\newblock Stability of bifurcating periodic solutions in a delayed reaction-diffusion population model.
\newblock \emph{Nonlinearity}, 23(6):1413--1431, 2010.

\bibitem{Yan2012}
X.-P. Yan and W.-T. Li.
\newblock Stability and Hopf bifurcations for a delayed diffusion system in population dynamics.
\newblock \emph{Discrete Contin. Dyn. Syst. Ser. B}, 17(1):367--399, 2012.


\bibitem{su2012hopf}
Y. Su, J. Wei, and J. Shi.
\newblock Hopf bifurcation in a diffusive logistic equation with mixed delayed and instantaneous density dependence.
\newblock \emph{J. Dynam. Differential Equations}, 24(4):897--925, 2012.

\bibitem{faria2001normal}
T. Faria.
\newblock Normal forms for semilinear functional differential equations in Banach spaces and applications. II.
\newblock \emph{Discrete Contin. Dyn. Syst.}, 7(1):155--176, 2001.


\bibitem{faria2002smoothness}
T. Faria, W. Huang, and J. Wu.
\newblock Smoothness of center manifolds for maps and formal adjoints for semilinear FDEs in general Banach spaces.
\newblock \emph{SIAM J. Math. Anal.}, 34(1):173--203, 2002.

\bibitem{hassard1981theory}
B. D. Hassard, N. D. Kazarinoff, and Y. Wan.
\newblock \emph{Theory and Applications of Hopf Bifurcation}.
\newblock Cambridge University Press, Cambridge, 1981.

\bibitem{gurney1980nicholson}
W. S. C. Gurney, S. P. Blythe, and R. M. Nisbet.
\newblock Nicholson's blowflies revisited.
\newblock \emph{Nature}, 287:17--21, 1980.

\bibitem{faria2002stability}
T. Faria and W. Huang.
\newblock Stability of periodic solutions arising from Hopf bifurcation for a reaction-diffusion equation with time delay.
\newblock In \emph{Differential Equations and Dynamical Systems}, volume 31 of Fields Inst. Commun., pages 125--141. Amer. Math. Soc., Providence, RI, 2002.

\bibitem {Cantrell2003}
R. S. Cantrell and C. Cosner.
\newblock \emph{Spatial Ecology via Reaction-Diffusion Equations}.
\newblock John Wiley and Sons, Chichester, 2003.

\bibitem{averill2017role} 
I. Averill, K.-Y. Lam, Y. Lou, 
\textit{The role of advection in a two-species competition model: a bifurcation approach}, 
Mem. Am. Math. Soc. \textbf{245} (1161) (2017), v+117.

\bibitem{lam2011concentration}
K.-Y. Lam, 
\textit{Concentration phenomena of a semilinear elliptic equation with large advection in an ecological model}, 
J. Differential Equations. \textbf{250} (1) (2011) 161--181.

\bibitem{lam2012limiting}
K.-Y. Lam, 
\textit{Limiting profiles of semilinear elliptic equations with large advection in population dynamics II}, 
SIAM J. Math. Anal. \textbf{44} (3) (2012) 1808--1830.


\bibitem{belgacem1995effects} 
F. Belgacem, C. Cosner, 
\textit{The effects of dispersal along environmental gradients on the dynamics of populations in heterogeneous environments}, 
Can. Appl. Math. Q. \textbf{3} (4) (1995) 379--397.

\bibitem{cantrell2003spatial} 
R.S. Cantrell, C. Cosner, \textit{Spatial Ecology via Reaction-Diffusion Equations}, 
Wiley Series in Mathematical and Computational Biology, John Wiley \& Sons, Ltd., 
Chichester, 2003.

\bibitem{cosner2014reaction}
C. Cosner, 
\textit{Reaction-diffusion-advection models for the effects and evolution of dispersal}, 
Discrete Contin. Dyn. Syst. \textbf{34} (5) (2014) 1701--1745.

\bibitem{lam2015evolution}
K.-Y. Lam, Y. Lou, F. Lutscher, 
\textit{Evolution of dispersal in closed advective environments}, 
J. Biol. Dyn. \textbf{9} (suppl. 1) (2015) 188--212.

\bibitem{lou2014evolution}
Y. Lou, F. Lutscher, 
\textit{Evolution of dispersal in open advective environments}, 
J. Math. Biol. \textbf{69} (6-7) (2014) 1319--1342.

\bibitem{lou2016qualitative}
Y. Lou, D. Xiao, P. Zhou, 
\textit{Qualitative analysis for a Lotka-Volterra competition system in advective homogeneous environment}, 
Discrete Contin. Dyn. Syst. \textbf{36} (2) (2016) 953--969.

\bibitem{lou2019global}
Y. Lou, X.-Q. Zhao, P. Zhou, 
\textit{Global dynamics of a Lotka-Volterra competition-diffusion-advection system in heterogeneous environments}, 
J. Math. Pures Appl. \textbf{9} (121) (2019) 47--82.

\bibitem{lou2015evolution}
Y. Lou, P. Zhou, 
\textit{Evolution of dispersal in advective homogeneous environment: the effect of boundary conditions}, 
J. Differential Equations \textbf{259} (1) (2015) 141--171.

\bibitem{vasilyeva2012how}
O. Vasilyeva, F. Lutscher, 
\textit{How flow speed alters competitive outcome in advective environments}, 
Bull. Math. Biol. \textbf{74} (12) (2012) 2935--2958.

\bibitem{Hess1991}
P. Hess,
\textit{Periodic-Parabolic Boundary Value Problems and Positivity},
Pitman Research Notes in Mathematics Series, vol. 247,
Longman Scientific \& Technical, Copublished in the United States with John Wiley \& Sons, Inc.,
Harlow/New York, 1991.

\bibitem{zhou2018evolution}
P. Zhou, X.-Q. Zhao, 
\textit{Evolution of passive movement in advective environments: general boundary condition}, 
J. Differential Equations \textbf{264} (6) (2018) 4176--4198.

\bibitem{gopalsamy1988time}
K. Gopalsamy, M. R. S. Kulenovi\'{c}, and G. Ladas, 
\textit{Time lags in a ``food-limited'' population model}, 
Appl. Anal. \textbf{31} (1988), 225--237.

\bibitem{gopalsamy1990environmental}
K. Gopalsamy, M. R. S. Kulenovi\'{c}, and G. Ladas, 
\textit{Environmental periodicity and time delays in a ``food-limited'' population model}, 
J. Math. Anal. Appl. \textbf{147} (1990), 545--555.

\bibitem{kuang1993delay}
Y. Kuang, 
\textit{Delay differential equations with applications in population dynamics}, 
Academic Press, New York, 1993.

\bibitem{allee1931animal}
W.C. Allee, 
\textit{Animal Aggregations: A Study in General Sociology}, 
University of Chicago Press, Chicago, 1931.

\bibitem{shi2006persistence}
J. Shi, R. Shivaji, 
\textit{Persistence in reaction diffusion models with weak Allee effect}, 
J. Math. Biol. \textbf{52} (6) (2006) 807--829.

\bibitem{stephens1999what}
P.A. Stephens, W.J. Sutherland, R.P. Freckleton, 
\textit{What is the Allee effect?}, 
Oikos \textbf{87} (1999) 185--190.

\bibitem{davidson2001effects}
F.S. Davidson, S.A. Gourley, 
\textit{The effects of temporal delays in a model for a food-limited, diffusing population}, 
J. Math. Anal. Appl. \textbf{261} (2001) 633--648.


\end{thebibliography}
\end{document}